\newcommand{\pxyo}{\pi_{xy}^{(\mathbf{0})}}
\newcommand{\mb}[1]{\mathbb{#1}}
\newcommand{\be}{\begin{equation*}}
\newcommand{\ee}{\end{equation*}}
\newcommand{\globalcolor}[1]{%
  \color{#1}\global\let\default@color\current@color
}
\definecolor{blush}{rgb}{0.87, 0.36, 0.51}
	\definecolor{brightcerulean}{rgb}{0.11, 0.67, 0.84}
	\definecolor{greenryb}{rgb}{0.4, 0.69, 0.2}
\newif\ifdark
\definecolor{darkred}{rgb}{0.9,0.2,0.2}
\definecolor{darkblue}{rgb}{0.7,0.3,1}
\definecolor{darkgreen}{rgb}{0.1,0.9,0.1}
\definecolor{franck}{rgb}{0,0.8,1}
\definecolor{pagebackground}{rgb}{.15,.21,.18}
\definecolor{pageforeground}{rgb}{.84,.84,.85}
\definecolor{symbols}{rgb}{0,0.7,1}
\colorlet{connection}{red!80!black}
\colorlet{boxcolor}{blue!50}
\definecolor{darkred}{rgb}{0.7,0.1,0.1}
\definecolor{darkblue}{rgb}{0.4,0.1,0.8}
\definecolor{darkgreen}{rgb}{0.1,0.7,0.1}
\definecolor{franck}{rgb}{0,0,1}
\definecolor{pagebackground}{rgb}{1,1,1}
\definecolor{pageforeground}{rgb}{0,0,0}
\colorlet{symbols}{blue!90!black}
\colorlet{connection}{red!30!black}
\colorlet{boxcolor}{blue!50!black}
\def\slash{\leavevmode\unskip\kern0.18em/\penalty\exhyphenpenalty\kern0.18em}
\def\dash{\leavevmode\unskip\kern0.18em--\penalty\exhyphenpenalty\kern0.18em}
\DeclareMathAlphabet{\mathbbm}{U}{bbm}{m}{n}
\DeclareFontFamily{U}{BOONDOX-calo}{\skewchar\font=45 }
\DeclareFontShape{U}{BOONDOX-calo}{m}{n}{
  <-> s*[1.05] BOONDOX-r-calo}{}
\DeclareFontShape{U}{BOONDOX-calo}{b}{n}{
  <-> s*[1.05] BOONDOX-b-calo}{}
\DeclareMathAlphabet{\mcb}{U}{BOONDOX-calo}{m}{n}
\SetMathAlphabet{\mcb}{bold}{U}{BOONDOX-calo}{b}{n}
\setlist{noitemsep,topsep=4pt,leftmargin=1.5em}
\DeclareMathAlphabet{\mathbbm}{U}{bbm}{m}{n}
\DeclareMathAlphabet{\mcb}{U}{BOONDOX-calo}{m}{n}
\SetMathAlphabet{\mcb}{bold}{U}{BOONDOX-calo}{b}{n}
\DeclareFontFamily{U}{mathx}{\hyphenchar\font45}
\DeclareFontShape{U}{mathx}{m}{n}{
      <5> <6> <7> <8> <9> <10>
      <10.95> <12> <14.4> <17.28> <20.74> <24.88>
      mathx10
      }{}
\DeclareSymbolFont{mathx}{U}{mathx}{m}{n}
\DeclareMathSymbol{\bigtimes}{1}{mathx}{"91}
\def\s{\mathfrak{s}}
\providecommand{\figures}{false}
{ \ifthenelse{\equal{\figures}{false}} {#1}{\[ {\rm Figure \ missing !} \]} }{}
\def\id{\mathrm{id}}
\def\Id{\mathrm{Id}}
\def\CA{\mathcal{A}}
\def\CQ{\mathcal{Q}}
\def\CM{\mathcal{M}}
\def\CT{\mathcal{T}}
\tikzstyle{tinydots}=[dash pattern=on \pgflinewidth off \pgflinewidth]
\tikzstyle{superdense}=[dash pattern=on 4pt off 1pt]
\newcommand{\mcM}{\mathcal{M}}
\newcommand{\mcI}{\mathcal{I}}
\newcommand{\gi}[2]{\gamma^{R,#1}_{xy}}
\newcommand{\beq}{\begin{equation}}
\newcommand{\eeq}{\end{equation}}
\newcommand{\mbn}{\mathbf{n}}
\def\Labe{\mathfrak{e}}
\def\Labn{\mathfrak{n}}
\def\${|\!|\!|}
\newenvironment{DIFnomarkup}{}{} 
\newtheorem{assumption}{Assumption}
\newfont{\indic}{bbmss12}
\def\PPi{\boldsymbol{\Pi}}
\def\Nabla_#1{\nabla_{\!#1}}
    \pgfmathsetlength{\pgf@xb}{\pgfkeysvalueof{/pgf/outer xsep}}%
    \pgfmathsetlength{\pgf@yb}{\pgfkeysvalueof{/pgf/outer ysep}}%
\def\symbol#1{\textcolor{symbols}{#1}}
\def\decorate#1#2{
        \ifnum#2>0
    		\foreach \count in {1,...,#2}{
	       	let
				\p1 = (sourcenode.center),
                \p2 = (sourcenode.east),
				\n1 = {\x2-\x1},
				\n2 = {1mm},
				\n3 = {(1.3+0.6*(\count-1))*\n1},
				\n4 = {0.7*\n1}
			in 
        		node[rectangle,fill=symbols,rotate=30,inner sep=0pt,minimum width=0.2*\n2,minimum height=\n2] at ($(sourcenode.center) + (\n3,\n4)$) {}
				}
		\fi
        \ifnum#1>0
    		\foreach \count in {1,...,#1}{
	       	let
				\p1 = (sourcenode.center),
                \p2 = (sourcenode.east),
				\n1 = {\x2-\x1},
				\n2 = {1mm},
				\n3 = {(1.3+0.6*(\count-1))*\n1},
				\n4 = {0.7*\n1}
			in 
        		node[rectangle,fill=symbols,rotate=-30,inner sep=0pt,minimum width=0.2*\n2,minimum height=\n2] at ($(sourcenode.center) + (-\n3,\n4)$) {}
				}
		\fi
}
\tikzset{
    dectriangle/.style 2 args={
        triangle,
        alias=sourcenode,
        append after command={\decorate{#1}{#2}}
    },
    dectriangle/.default={0}{0},
}
\tikzset{
	cross/.style={path picture={ 
  		\draw[symbols]
			(path picture bounding box.south east) -- (path picture bounding box.north west) (path picture bounding box.south west) -- (path picture bounding box.north east);
		}},
root/.style={circle,fill=green!50!black,inner sep=0pt, minimum size=1.2mm},
        dot/.style={circle,fill=pageforeground,inner sep=0pt, minimum size=1mm},
        dotred/.style={circle,fill=pageforeground!50!pagebackground,inner sep=0pt, minimum size=2mm},
        var/.style={circle,fill=pageforeground!10!pagebackground,draw=pageforeground,inner sep=0pt, minimum size=3mm},
        kernel/.style={semithick,shorten >=2pt,shorten <=2pt},
        kernels/.style={snake=zigzag,shorten >=2pt,shorten <=2pt,segment amplitude=1pt,segment length=4pt,line before snake=2pt,line after snake=5pt,},
        rho/.style={densely dashed,semithick,shorten >=2pt,shorten <=2pt},
           testfcn/.style={dotted,semithick,shorten >=2pt,shorten <=2pt},
        renorm/.style={shape=circle,fill=pagebackground,inner sep=1pt},
        labl/.style={shape=rectangle,fill=pagebackground,inner sep=1pt},
        xic/.style={very thin,circle,draw=symbols,fill=symbols,inner sep=0pt,minimum size=1.2mm},
        g/.style={very thin,rectangle,draw=symbols,fill=symbols!10!pagebackground,inner sep=0pt,minimum width=2.5mm,minimum height=1.2mm},
        xi/.style={very thin,circle,draw=symbols,fill=symbols!10!pagebackground,inner sep=0pt,minimum size=1.2mm},
	xies/.style={very thin,rectangle,fill=green!50!black!25,draw=symbols,inner sep=0pt,minimum size=1.1mm},
	xiesf/.style={very thin,rectangle,fill=green!50!black,draw=symbols,inner sep=0pt,minimum size=1.1mm},
        xix/.style={very thin,crosscircle,fill=symbols!10!pagebackground,draw=symbols,inner sep=0pt,minimum size=1.2mm},
        X/.style={very thin,cross,rectangle,fill=pagebackground,draw=symbols,inner sep=0pt,minimum size=1.2mm},
	xib/.style={thin,circle,fill=symbols!10!pagebackground,draw=symbols,inner sep=0pt,minimum size=1.6mm},
	xie/.style={thin,circle,fill=green!50!black,draw=symbols,inner sep=0pt,minimum size=1.6mm},
	xid/.style={thin,circle,fill=symbols,draw=symbols,inner sep=0pt,minimum size=1.6mm},
	xibx/.style={thin,crosscircle,fill=symbols!10!pagebackground,draw=symbols,inner sep=0pt,minimum size=1.6mm},
	kernels2/.style={very thick,draw=connection,segment length=12pt},
	keps/.style={thin,draw=symbols,->},
	kepspr/.style={thick,draw=connection,->},
	krho/.style={thin,draw=symbols,superdense,->},
	krhopr/.style={thick,draw=connection,superdense},
	triangle/.style = { regular polygon, regular polygon sides=3},
	not/.style={thin,circle,draw=connection,fill=connection,inner sep=0pt,minimum size=0.5mm},
	diff/.style = {very thin,draw=symbols,triangle,fill=red!50!black,inner sep=0pt,minimum size=1.6mm},
	diff1/.style = {very thin,dectriangle={1}{0},fill=red!50!black,draw=symbols,inner sep=0pt,minimum size=1.6mm},
	diff2/.style = {very thin,dectriangle={1}{1},fill=red!50!black,draw=symbols,inner sep=0pt,minimum size=1.6mm},
		diffmini/.style = {very thin,rectangle,fill=black,draw=black,inner sep=0pt,minimum size=0.75mm},
	 kernelsmod/.style={very thick,draw=connection,segment length=12pt},
	 rec/.style = {very thin,rectangle,fill=black,draw=black,inner sep=0pt,minimum size=2mm},
	cerc/.style={very thin,circle,draw=black,fill=symbols,inner sep=0pt,minimum size=2mm},
	stars/.style={very thin,star,star points=6,star point ratio=0.5, draw=black,fill=red,inner sep=0pt,minimum size=0.7mm},
	>=stealth,
        }
        \tikzset{
root/.style={circle,fill=black!50,inner sep=0pt, minimum size=3mm},
        circ/.style={circle,fill=white,draw=black,very thin,inner sep=.5pt, minimum size=1.2mm},
        round1/.style={fill=white,outer sep = 0,inner sep=2pt,rounded corners=1mm,draw,text=black,thin,minimum size=1.2mm},
          circ1/.style={circle,fill=red!10,draw=red,very thin,inner sep=.5pt, minimum size=1.2mm},
        rect/.style={fill=white,outer sep = 0,inner sep=2pt,rectangle,draw,text=black,thin,minimum size=1.2mm},
        rect1/.style={fill=white,outer sep = 0,inner sep=2pt,rectangle,draw,text=black,thin,minimum size=1.2mm},
        round2/.style={fill=red!10,outer sep = 0,inner sep=2pt,rounded corners=1mm,draw,text=black,thin,minimum size=1.2mm},
       round3/.style={fill=blue!10,outer sep = 0,inner sep=2pt,rounded corners=1mm,draw,text=black,thin,minimum size=1.2mm}, 
        rect2/.style={fill=black!10,outer sep = 0,inner sep=2pt,rectangle,draw,text=black,thin,minimum size=1.2mm},
        dot/.style={circle,fill=black,inner sep=0pt, minimum size=1.2mm},
        dotred/.style={circle,fill=black!50,inner sep=0pt, minimum size=2mm},
        var/.style={circle,fill=black!10,draw=black,inner sep=0pt, minimum size=3mm},
        kernel/.style={semithick,shorten >=2pt,shorten <=2pt},
         diag/.style={thin,shorten >=4pt,shorten <=4pt},
        kernel1/.style={thick},
        kernels/.style={snake=zigzag,shorten >=2pt,shorten <=2pt,segment amplitude=1pt,segment length=4pt,line before snake=2pt,line after snake=5pt,},
		kernels1/.style={snake=zigzag,segment amplitude=0.5pt,segment length=2pt},
		rho1/.style={densely dotted,semithick},
        rho/.style={densely dashed,semithick,shorten >=2pt,shorten <=2pt},
           testfcn/.style={dotted,semithick,shorten >=2pt,shorten <=2pt},
           visible/.style={draw, circle, fill, inner sep=0.25ex},
        renorm/.style={shape=circle,fill=white,inner sep=1pt},
        labl/.style={shape=rectangle,fill=white,inner sep=1pt},
        xic/.style={very thin,circle,fill=symbols,draw=black,inner sep=0pt,minimum size=1.2mm},
        xi/.style={very thin,circle,fill=blue!10,draw=black,inner sep=0pt,minimum size=1.2mm},
	xib/.style={very thin,circle,fill=blue!10,draw=black,inner sep=0pt,minimum size=1.6mm},
	xie/.style={very thin,circle,fill=green!50!black,draw=black,inner sep=0pt,minimum size=1mm},
	xid/.style={very thin,circle,fill=symbols,draw=black,inner sep=0pt,minimum size=1.6mm},
	edgetype/.style={very thin,circle,draw=black,inner sep=0pt,minimum size=5mm},
	nodetype/.style={very thick,circle,draw=black,inner sep=0pt,minimum size=5mm},
	kernels2/.style={very thick,draw=connection,segment length=12pt},
clean/.style={thin,circle,fill=black,inner sep=0pt,minimum size=1mm},	not/.style={thin,circle,fill=symbols,draw=connection,fill=connection,inner sep=0pt,minimum size=0.8mm},
	>=stealth,
        }
\def\DeclareSymbol#1#2#3{%
	\expandafter\gdef\csname MH@symb@#1\endcsname{\tikzsetnextfilename{symbol#1}%
	\tikz[baseline=#2,scale=0.15,draw=symbols,line join=round]{#3}}%
	\expandafter\gdef\csname MH@symb@#1s\endcsname{\scalebox{0.75}{\tikzsetnextfilename{symbol#1}%
	\tikz[baseline=#2,scale=0.15,draw=symbols,line join=round]{#3}}}%
	\expandafter\gdef\csname MH@symb@#1ss\endcsname{\scalebox{0.65}{\tikzsetnextfilename{symbol#1}%
	\tikz[baseline=#2,scale=0.15,draw=symbols,line join=round]{#3}}}%
	}
\def\<#1>{\ifthenelse{\boolean{mmode}}{\mathchoice{\csname MH@symb@#1\endcsname}{\csname MH@symb@#1\endcsname}{\csname MH@symb@#1s\endcsname}{\csname MH@symb@#1ss\endcsname}}{\csname MH@symb@#1\endcsname}}
 \def\1{\mathbf{\symbol{1}}}
\def\one{\mathbf{1}}
\DeclareMathAlphabet{\mathpzc}{OT1}{pzc}{m}{it}
\def\eqref#1{(\ref{#1})}
\newcommand*{\bigcdot}{}
\DeclareRobustCommand*{\bigcdot}{%
  \mathbin{\mathpalette\bigcdot@{}}%
}
\newcommand*{\bigcdot@scalefactor}{.5}
\newcommand*{\bigcdot@widthfactor}{1.15}
\newcommand*{\bigcdot@}[2]{%
  \sbox0{$#1\vcenter{}$}
  \sbox2{$#1\cdot\m@th$}%
  \hbox to \bigcdot@widthfactor\wd2{%
    \hfil
    \raise\ht0\hbox{%
      \scalebox{\bigcdot@scalefactor}{%
        \lower\ht0\hbox{$#1\bullet\m@th$}%
      }%
    }%
    \hfil
  }%
}
\def\two{{\<generic>\kern0.05em\<genericb>}}
\def\twoI{{\<Ito>\kern0.05em\<Itob>}}
\def\mail#1{\burlalt{#1}{mailto:#1}}
\begin{document}

\title{Diagram-free approach for convergence of trees based model in Regularity Structures}

\author{Yvain Bruned$^1$, Usama Nadeem$^2$}
\institute{ 
 IECL (UMR 7502), Université de Lorraine
 \and University of Edinburgh \\
Email:\ \begin{minipage}[t]{\linewidth}
\mail{yvain.bruned@univ-lorraine.fr}
\\ \mail{M.U.Nadeem@sms.ed.ac.uk}
\end{minipage}}

\maketitle 

\begin{abstract}
In this work, we translate at the level of decorated trees some of the crucial arguments which have been used by P. Linares et al. in their recent paper to propose a diagram-free approach for the convergence of the model in regularity structures. This allows us to broaden the perspective and enlarge the scope of singular SPDEs covered by this approach. It also sheds new light on algebraic structures introduced in the foundational paper of Martin Hairer on regularity structures which was used later for recursively described renormalised models. 
\end{abstract}
\setcounter{tocdepth}{2}
\setcounter{secnumdepth}{4}
\tableofcontents

\section{Introduction}
 
 Solutions of singular stochastic partial differential equations (SPDEs) are given as expansions with stochastic iterated integrals coming from the iteration of Duhamel's formula. Both regularity structures introduced in \cite{reg} and Para-controlled calculus in \cite{GIP13} are based on this idea. Then, the main task that remains is to construct such iterated integrals as containing distributional products of the singular SPDEs considered. This is a challenging problem but the advent of algebraic structures like Hopf algebras that organise the tremendous computations involved, have made the task easier. In the context of regularity structures, one works with local objects, by which we mean recentred iterated integrals that need to be renormalised. Recentring and renormalisation have since been understood via two Hopf algebras in cointeraction in \cite{BHZ}. This paved the way for a general convergence theorem in \cite{CH16} and together with \cite{BCCH} it provides a general black box for treating singular SPDEs. In the discrete setting, one is far from a general convergence result, but progress (see \cite{Mat18,MH21,BN}) has been made by building upon the convergence result obtained in \cite{HQ15} with the use of discrete regularity structures in \cite{EH17}. Let us mention that the most general result is given in \cite{BN} where the authors treat the generalised KPZ equation - one of the most challenging singular SPDEs.
All of these approaches are based on renormalising Feynman diagrams obtained from decorated trees which are combinatorial codings for iterated integrals. The renormalisation implemented is close in spirit to the  BPHZ renormalisation \cite{BP57,KH69,WZ69}. 
 
Recently, a different approach \cite{LOTP} has been proposed in the context of regularity structures, which is based on coding that replaces decorated trees with multi-indices \cite{OSSW,LOT,LO22}. Multi-indices can be understood as a way of compressing information and collecting decorated trees sharing some features under the umbrella of the same symbol. With such an approach, one has no direct access to a specific iterated integral and therefore the authors developed in \cite{LOTP} a diagrammatic-free approach for the convergence which relies on the inductive construction of multi-indices. Another key idea in their construction is to use a spectral gap inequality that allows them to connect the $p$-th moment of a stochastic integral to one of its Malliavin derivative. This approach is quite promising but thus far has only been affected on the stochastic quasilinear parabolic equation.
 
In this work, we want to push forward this convergence result to a high degree of generality by applying these ideas to decorated trees. The scope will now be any subcritical equation as described in \cite{reg}, made possible by providing general algebraic formulae at the level of decorated trees that were written in a specific case for multi-indices in \cite{LOTP}. We reiterate that deriving these formulae is the main contribution of our paper which when combined with the analytic estimates from \cite{LOTP}, one can obtain a general convergence result which provides an alternative Black Box for singular SPDEs based on recursive arguments, where the model is constructed with a preparation maps introduced in \cite{BR18} and the renormalised equation is as given in \cite{BB21}. This reveals a major simplification, as a lighter combinatorial structure is used in comparison to \cite{BHZ}.
Let us mention a different approach for decorated trees in \cite{HS23} introduced a few months after our work. The authors prove a general convergence result inspired also by \cite{LOTP}. They redo entirely the analytical part by introducing pointed Besov modelled distributions. For the algebraic part, they do not use preparation maps and there is not an obvious correspondence with the key algebraic identities of  \cite{LOTP}. It is also not clear whether the quasi-linear case is covered by their approach, whereas our work the identities obtained are in the quasi-linear context.
Just before the work \cite{HS23}, another work by the first author \cite{BB23} presented a simple proof of the convergence of the renormalised model for the generalised KPZ equation. It uses also the spectral gap inequality but it based on diagrammatic approach following recent advances in \cite{BN} and also using some algebraic identities of this work such as  Proposition~\ref{malliavin_derivative_model1}.

 The other impact of this work is the unification of the various combinatorial approaches such as decorated trees and multi-indices by showing ideas in one structure appear also naturally in the other. This programme has been started in \cite{BK} where it has been shown that a common post-Lie structure is at the origin of the Hopf algebras used for singular SPDEs. The result can be described in both languages: decorated trees and multi-indices.
 
Let us be more specific about the main ideas of this paper. Solutions $ u $ of local subcritical SPDEs are locally described by
\begin{equs}
u(y) -  u(x) = \sum_{\tau \in \mathcal{T}} u_{\tau}(x)(\Pi_x \tau)(y),
\end{equs}
where $ (\Pi_x \tau)(y) $ are stochastic iterated integrals recentred around the point $ x $, $ \mathcal{T} $ is a combinatorial set parametrising the iterated integrals - that can comprise either decorated trees \cite{reg,BHZ} or multi-indices \cite{OSSW,LOT} - and the $ u_{\tau}(x) $ are some kind of derivatives. Then, the theory of regularity structures provides a reexpansion map $ \Gamma_{xy} $ that allows us to move the recentring:
\begin{equs}
\Pi_y = \Pi_x \Gamma_{xy}.
\end{equs}
The collection of these two maps $ (\Pi_x,\Gamma_{xy}) $ is what is referred to as a model \cite[Def. 3.1]{reg}, and it defines the topology upon which one wants to pass to the limit for constructing solutions of singular SPDEs. Indeed, the rough noise in the equation is replaced by a mollified version depending on a small parameter $ \varepsilon $, and one considers the mollified model $ (\Pi^{(\varepsilon)}_x, \Gamma^{(\varepsilon)}_{xy})$ with the goal of eventually repealing the mollification. This model reflects the ill-defined distributional products of a singular SPDE and therefore fails to converge without an appropriate renormalisation. This renormalisation is implemented recursively by the choice of a preparation map $ R $ satisfying some suitable local properties, which allows one to consider the renormalised model denoted by $ (\Pi_x^R, \Gamma_{xy}^R) $. 
In \cite{LOTP}, the authors solve quasilinear parabolic equations of the form:
\begin{equs}\label{eq:quas}
(\partial_ t - \partial_x^{2})u = a(u) \partial_{x}^2 u + \xi,
\end{equs}
driven by a stationary noise $\xi$ that is assumed to satisfy the spectral gap inequality - see \cite[Ass. 2.1]{LOTP}. The central quantity of interest in their approach is $\Pi^{-}_{x}$, which is identified as the negative-degree part of their model. One can refer to \cite[Eq. 2.18]{LOTP} for the proper expression, but on the regularity structures side, this corresponds to applying $\Pi_x^R$ to the non-linearity in \eqref{eq:quas}. We may use these interchangeably, being mindful of the correspondence between decorated trees and the multi-index that encodes them. The significance of this object in their argument lies in the fact that a Schauder estimate of $\Pi^{-}_{x}$ in base point $x$ - which they represent by $\Pi_x$ and corresponds to $\Pi^{R}_x\mathcal{I}(\cdot)$ on the regularity structures side - can solve the renormalised version of \eqref{eq:quas}, provided that one has control of the $p$-th moments of $\Pi^{-}_{x}$. To gain this control of the $p$-th moments of $\Pi^-_x$ the authors appeal to the spectral gap inequality, which reduces the problem to controlling the first moment of $\Pi^-_x$ and the $p$-th moment of its Malliavin derivative $\delta\Pi^-_x[\xi]$. 
Indeed, one has 
\begin{equs}
	\left(\mathbb{E}|\Pi_x^{-}|^p\right)^{\frac{1}{p}} \lesssim \left|\mathbb{E}\Pi_x^{-}\right| + \mathbb{E}\left(\Vert \delta\Pi_x^{-}\Vert_{\star}^p\right)^{\frac{1}{p}},
	\end{equs}
where $ \Vert \cdot \Vert_{\star} $ is a suitable functional norm.
Control of $\left|\mathbb{E}\Pi_x^{-}\right|$ is easier of the two tasks; it is a consequence of the implementation of the BPHZ renormalisation procedure. For the control of the latter, the task is made difficult however by the fact that one sees divergent constants in $\Pi_x^{-}$. One hopes that $\delta c = 0$ should kill the constants but the configuration in which $c$ appear keeps this from happening. The way out is to evaluate at the diagonal and then taking the Malliavin derivative. This extricates $\delta\Pi^{-}$ from the divergent constants but has the unwanted effect of restricting it to the diagonal, motivating the need to recentre. The natural candidate map to affect this is $\Gamma_{xy}$, which in their paper is written on the dual side as $\Gamma_{xy}^{*}$. This leads the authors to the identity
\begin{equs}
\delta \Pi^{-}_y(x) -  \delta \Gamma^{*}_{yx}&\Pi^{-}_{x} \\
&=\sum_{k\ge 0}z_k\Pi_y^{k}(x)\partial_1^{2}\left(\delta\Pi_y-\delta\Pi_y(x)-(\delta\Gamma^{*}_{yx}\Pi_x)\right) + \delta\xi_\epsilon(x)\one,
\end{equs}
where the left-hand side is just an application of Leibniz rule to $\delta\left(\Gamma^{*}_{xy}\Pi^{-}_{y}\right)$. The expression $\delta\Pi_y-\delta\Pi_y(x)-(\delta\Gamma^{*}_{yx}\Pi_x)$ is recognised as having the potential to locally describe $\delta \Pi^{-}_y(x) -  \delta \Gamma^{*}_{yx}\Pi^{-}_{x}$ but it is involved in a possibly ill-defined product with $\Pi_y^{k}$. To circumvent this, the authors replace $\delta\Gamma^{*}_{yx}$ with an endomorphism $d\Gamma^{*}_{yx}$ which enforces a particular bound on $\delta\Pi_y-\delta\Pi_y(x)-(\delta\Gamma^{*}_{yx}\Pi_x)$, leading to the identity:
\begin{equs} \label{renormalisation_identity}
\delta \Pi^{-}_y(x) -  d \Gamma^{*}_{yx}&\Pi^{-}_{x} \\
&=\sum_{k\ge 0}z_k\Pi_y^{k}(x)\partial_1^{2}\left(\delta\Pi_y-\delta\Pi_y(x)-(d\Gamma^{*}_{yx}\Pi_x)\right) + \delta\xi_\epsilon(x)\one.
\end{equs}
The map $d\Gamma^{*}_{yx}$ is algebraically similar to $\delta\Gamma^{*}_{yx}$, and in fact they are chosen so that they agree on planted trees. In a similar vein the authors derive the following expression
\begin{equs} \label{magic_formula}
	\begin{aligned}
	Q \left( \delta \Pi^{-}_x -  d \Gamma^{*}_{xz} Q \Pi^{-}_{z} \right)(z) &= Q \sum_{k \geq 0} z_k \Pi_x^k (z) \partial_x^2 \left( \delta  \Pi_x -  d \Gamma^{*}_{xz} Q  \Pi_{z}  \right)(z) \\ &+ \delta \xi_{\varepsilon} (z),
	\end{aligned}
\end{equs}
where $ \Pi_x  $ corresponds to the model applied to terms from the right hand side of the equation which have been integrated,  $ \xi_{\varepsilon} $ is the mollified space-time white noise, $ Q $ is a projection that considers only the most singular terms in the expansion, $ z_k $ is an abstract variable that keeps track of the exponent $ k $ in $ \Pi^k_x (z) $. A crucial observation about the formula \eqref{magic_formula} is the absence of any renormalisation term: the authors of \cite{LOTP} described it as a $ c $-free formula. The main contribution of this work is to recover this formula at the level of decorated trees and to extend it to a large class of models.
This is done by considering two vector spaces of decorated trees, one $ \mathcal{T}_0 $ coming from the right hand side of a singular SPDEs and the other $ \mathcal{T}_1 $ is such that one instance of the noise $ \xi $ inside an element of $ \mathcal{T}_0 $ is replaced by an infinitesimal perturbation $ \delta \xi $. Formally, we define a Malliavin derivative $ D_{\Xi} : \mathcal{T}_0 \rightarrow   \mathcal{T}_1$ at the level of decorated trees (see Definition~\ref{malliavin_derivative}). Then, we introduce two renormalised models on decorated trees in \eqref{recursive_model}: $ \Pi_x^{R,0} $ taking into account $ \delta \xi $ when computing the length of Taylor expansion for the recentering and  $ \Pi_x^{R,1} $ that treats $\delta\xi$ as it would be $\xi$, and as a result produces shorter Taylor expansions. The map $D_\Xi$, we prove in Theorem~\ref{malliavin_derivative_model1} allows us to give a combinatorial meaning to the fundamentally analytical Malliavin derivative $\delta\Pi^{R,1}$:
\begin{equs}
\delta\Pi^{R,1} = \Pi^{R,1}D_\Xi{\color{blue}.}
\end{equs}
We will further show that the two models are connected via a map $ \hat{\Delta}_0 $ which is close to the way the renormalisation was handled in \cite{reg}.
It is defined recursively following ideas developed in 
\cite{BR18} in Proposition~\ref{prop:magicformula}. 
The following identity encapsulates this relationship:
\begin{equation} \label{magicformula-intro}
	\Pi_x^{R,1} = \left( \Pi_x^{R,0}\otimes f_x^{R,0} \right) \hat{\Delta}_0.
\end{equation}
The map $\hat\Delta_0$ here links the two models essentially by curtailing the excess terms in the Taylor expansions under $ \Pi_x^{R,0} $ and $ f_x^{R,0} $ is a map into reals, defined via $\Pi_x^{R,0}$. From this identity, one is able to define $ d \Gamma_{yx} $ on decorated trees in Definition~\ref{def:dGamma}:
\begin{equs}\label{eq:dGammadef-intro}
	d \Gamma_{yx}^R  = \mathcal{Q}_0 \left( \Gamma_{yx}^{R,0} \otimes f_x^{R,0} \right) \hat{\Delta}_0 D_{\Xi}, 
\end{equs}
where $ \mathcal{Q}_0 $ projects to zero decorated trees with at least one noise $ \delta \xi $ and $ \Gamma_{yx}^{R,0} $ is the reexpansion map for $ \Pi_x^{R,0} $. With this definition and by putting an assumption on the renormalisation (Assumption~\ref{assumpt2}), we provide a version of \eqref{magic_formula} on decorated trees in Theorem~\ref{main_result} together with Corollary~\ref{coro_identi}. The model for the quasilinear equation is a peculiar one because in it one sees terms containing $ \partial_x^2 K $ - where $K$ is the heat kernel. These do not lend well to the analysis, because there is no gain of regularity by convolving with the kernel $\partial_x^2 K$. However, for other models satisfying Assumption~\ref{assumpt1} like the generalised KPZ equation and the $ \varphi^4_3 $, we are able to provide a stronger statement in Theorem~\ref{th:local without R}, for every $ \tau \in \mathcal{T}_0 $
\begin{equs} \label{local R_intro}
	\left( \Pi^{R,1}_y  d \Gamma_{yx}^R \tau  \right)(y)  =   \left(\delta \Pi_x^{R,1} \tau \right)(y).
\end{equs}
To gain control of \eqref{renormalisation_identity}, the authors appeal to a reconstruction argument, which involves constructing a distribution $F$ for a family of distributions $\{F_{z}\}_z$, indexed by spacetime, such that near the diagonal $F(z)\approx F_z(z)$ in a suitable sense. This reconstruction necessitates (amongst other things) control of the so-called "continuity expression" identity:
\begin{equs} \label{conti_Gamma}
d\Gamma_{xy}^{*} - d\Gamma_{xz}^{*}\Gamma^{*}_{zy}.
\end{equs}
In Proposition~\ref{control_gamma}, we rewrite this identity in terms of $ \gamma^{R,0}_{xy} $ that characterise $ \Gamma^{R,0}_{xy} $.
Once control of the \eqref{renormalisation_identity} is had, one is able to construct the rough path increment of $\delta\Pi_{x}$ and control it. In particular, they prove that the rough path increment of $\delta\Pi_x$:
\begin{equs} \label{other_ident}
\delta \Pi_x - \delta \Pi_x (z) -  d \Gamma^{*}_{xz}\Pi_{z},
\end{equs}
relates to the rough path increment of $\delta\Pi_x^{-}$ via an integral representation. In Proposition~\ref{ident_Pi}, we explain how this identity translates to decorated trees. Control of these two rough path increments can then be leveraged to get control of $p$-th moments of $\delta\Pi^{-}_{x}$.

Let us now outline the paper by summarising the content of its sections. In Section~\ref{Section::2}, we recall the basics of decorated trees and illustrate the rules associated with singular SPDEs for constructing those trees. Then, we mention one crucial assumption for the renormalised quasilinear equation (see Assumption~\ref{assumpt2}) that will be used in the sequel.  We also consider decorated trees with two different noises for encoding Malliavin derivatives. We introduce this derivative $ D_{\Xi} $ at the level of the trees - see Definition~\ref{malliavin_derivative}. We conclude the section with the important Assumption~\ref{assumpt1} on the positive degree of branches containing a Malliavin derivative. This is crucial in establishing the aforementioned algebraic identities. In Section~\ref{Section::3}, we introduce renormalised models based on two different degrees which differ on whether they take into account the Malliavan derivative on the noise or not. The renormalisation is introduced via a preparation map in Definition~\ref{DefnPreparationMap} that provides a recursive definition of the model. Here, we follow the construction given in \cite{BR18,BB21}. The main novelty of the section is to explain how to move from one model to the other given in Proposition~\ref{prop:magicformula}. The proof is based on a recursive map $ \hat{\Delta}_0 $ which has been originally introduced in the context of renormalisation see \cite{reg,BR18}. Finally, in Section~\ref{sec:AlgIde} we show how the previously built machinery can be used to bridge the gap between the approaches \cite{reg} and \cite{LOTP}. Namely, we propose algebraic counterparts on decorated trees, to the objects employed in \cite{LOTP} - see Proposition~\ref{malliavin_derivative_model1} and Definition~\ref{def:dGamma} - before showing how they can be used to reproduce some of the most important identities used in \cite{LOTP}. These identities are presented in Proposition~\ref{control_gamma}, Theorem~\ref{th:local without R} Theorem~\ref{main_result}, Corollary~\ref{coro_identi} and Proposition~\ref{ident_Pi}.
\subsection*{Acknowledgements}

{\small
	The authors thank Pablo Linares, Felix Otto, Markus Tempelmayr, and Pavlos Tsatsoulis for interesting discussions on the topic of multi-indices. They also thank the reviewer for their meticulous double-checking and insight.
Y. B. thanks the Max Planck Institute for Mathematics in the Sciences (MiS) in Leipzig for having supported his research via a long stay in Leipzig from January to June 2022. Y. B.
is funded by the ANR via the project LoRDeT (Dynamiques de faible régularité via les arbres décorées) from the projects call T-ERC\_STG. U. N. thanks the Max Planck Institute for Mathematics in the Sciences (MiS) for a short stay in Leipzig.
}

\section{Decorated Trees}\label{sec:DecTrMI}

\label{Section::2}

We begin naturally by explicating on decorated trees, the main combinatorial structure used in SPDEs, which were first systematically described in this context in \cite{BHZ}.

We pick three symbols $\mcI$, $\Xi_0$, and $\Xi_1$ and define with it, a set of edge decorations $ \mathcal{D} \coloneqq \lbrace \mcI,\,\Xi_0,\,\Xi_1 \rbrace \times \mathbb{N}^{d+1}$. The first of the symbols $\mcI$ represents convolution with a kernel that comes from the differential operator of the equation one is interested in, and the symbols $\Xi_0,\,\Xi_1$ represent the noise term and an infinitesimal perturbation of it. When working with a system of SPDEs with more than one kernel and noise, one can augment $\mathcal{D}$ with new symbols (and symbol pairs) for the extra kernels and noises.

\begin{definition}
A \textbf{decorated tree} over $\mathcal{D}$ is a $3$-tuple of the form  $\tau_{\Labe}^{\Labn} =  (\tau,\Labn,\Labe)$ where $\tau$ is a non-planar rooted tree with node set $N_\tau$ and edge set $E_\tau$. The maps $\Labn : N_{\tau} \rightarrow \mathbb{N}^{d+1}$ and $\Labe : E_\tau{\tiny } \rightarrow \mathcal{D}$ are node and edge decorations, respectively.
\end{definition}

We use $T$ to denote the set of decorated trees and $ \mathcal{T} $ is its linear span.  We define a binary tree product by 
\begin{equation}  \label{treeproduct}
 	(\tau,\Labn,\Labe) \cdot  (\tau',\Labn',\Labe') 
 	= (\tau \cdot \tau',\Labn + \Labn', \Labe + \Labe')\;, 
\end{equation} 
where $\tau \cdot \tau'$ is the rooted tree obtained by identifying the roots of $ \tau$ and $\tau'$. The sums $ \Labn + \Labn'$ mean that decorations are added at the root and extended to the disjoint union by setting them to vanish on the other tree. Each edge and vertex of both trees keeps its decoration, except the roots which merge into a new root decorated by the sum of the previous two decorations.

\begin{enumerate}
   \item[(i)] An edge decorated by  $ (\mcI,a) \in \mathcal{D} $  is denoted by $ \mcI_{a} $. The symbol $  \mcI_{a} $ is also viewed as the operation that grafts a tree onto a new root via a new edge with edge decoration $ a $. The new root at hand remains decorated with $0$. 
   
   \item[(ii)]  An edge decorated by $ (\Xi_i,0) \in \mathcal{D} $ is denoted by $  \Xi_i $, for $i\in\{0,1\}$.

   \item[(iii)] A factor $ X^k$   encodes a single node  $ \bullet^{k} $ decorated by $ k \in \mathbb{N}^{d+1}$. We write $ X_i$, $ i \in \lbrace 0,1,\ldots,d\rbrace $, to denote $ X^{e_i}$. Here, we have denoted by $ e_0,...,e_d $ the canonical basis of $ \mathbb{N}^{d+1} $. The element $ X^0 $ which encodes $\bullet^0$, will be denoted by $\one$. The space of all the monomials $X^{k}$ will be denoted by $\bar{T}$, and its linear span by $\bar{\CT}$.
 \end{enumerate}
 Using this symbolic notation any decorated tree $ \tau \in T $ can be represented as:
 \begin{equs}\label{eq:treeform}
 \tau = X^k\Xi_0^l \Xi_1^m  \prod_{i=1}^n \mcI_{a_i}(\tau_i),
 \end{equs}
 where $ \prod_i $ is the tree product, $ k \in \mathbb{N}^{d+1} $, $l$ , $m \in \mathbb{N} $. In relevant applications, and particularly for the one we have in mind for this article, a product of noises is not allowed and one can only consider the cases for which $ l + m \le 1$. A planted tree is a tree of the form $ \mcI_a(\tau) $ meaning that there is only one edge connecting the root to the rest of the tree. The decomposition \eqref{eq:treeform} can be reinterpreted by saying that any decorated tree admits a unique decomposition into a product of planted trees. We also introduce abstract derivatives $\CD_p$, for $p\in\mathbb{N}^{d+1}$ by requiring that
\begin{equs}
\CD_p\CI_\alpha\tau=\CI_{(\alpha+p)}\tau,\quad \CD_pX^q = \begin{cases}
\frac{q!}{(q-p)!}X^{q-p} & q \ge p \\
0 & \mbox{otherwise},
\end{cases}
\end{equs}
and finally extending to all of $T$, by Leibniz rule.
Upon the noises we define the degree maps $\textsf{deg}_0$ and  $\textsf{deg}_1$ by setting
\begin{equs}
 \textsf{deg}_0(\Xi_0)= \textsf{deg}_1(\Xi_0) = \textsf{deg}_1(\Xi_1) =  \alpha, \quad \textsf{deg}_0(\Xi_1) = \alpha + \frac{d+2}{2},
 \end{equs}
where $ \alpha $ is the space-time regularity of the noise $ \xi $ encoded by $ \Xi_0 $ and the presence of $\tfrac{d+2}{2}$ in $\textsf{deg}_0(\Xi_1)$ is to keep track of the gain in regularity that comes from taking a Malliavin Derivative - this point is codified in \eqref{eq:malliavin_derivative_model1}. From there the degree maps are extended to all of the $T$ by decreeing that for $j\in\{0,1\}$:
\begin{alignat*}{2}
&\textsf{deg}_j(X_0)= 2,&&\textsf{deg}_j(X_i) = 1,\,\,\,\text{for }i\neq 0, \\
&\textsf{deg}_j\left(\prod_{i}\tau_i\right)=\sum_i\textsf{deg}_j(\tau_i),\qquad&&\textsf{deg}_j\left(\mcI_\alpha\left(\tau\right) \right)= \textsf{deg}_j(\tau) + 2 - |\alpha|_{\s_{\text{\tiny par}}},
\end{alignat*}
where for a fixed scaling vector $\s =  (\s_0,\hdots,\s_d) \in\mathbb{N}_{> 0}^{d+1}$, and some multi-index $\mbn\in\mb{N}_{\ge 0}^{d+1}$ we define the quantity
 \begin{equs}
|\mathbf{n}|_{\s}\coloneqq \sum_{i=0}^d \s_i \mbn_i,
\end{equs}
and $\s_\text{par} = (2,\hdots,1)$. Notice that this means for a multi-index $\mbn$, $\textsf{deg}_j(X^\mbn) = |\mbn|_{\s_\text{par}}$. We consider parabolic scaling in this paper - for which reason we fix $\s = \s_{\text{par}}$ - but everything works for a general scaling, with the obvious modification of the degree maps on the monomials. In the sequel, we will use a short hand notation replacing $ |\cdot|_{\s} $ by $ |\cdot| $. We remark that the "$+2$" in the definition of $\textsf{deg}_i(\mcI_\alpha(\tau))$ is due to Schauder estimates. In addition to the degree maps, we also define $|\cdot|_{\Xi}$ as the mapping that outputs the number of noises in a decorated tree, without any discrimination between $\Xi_0$ and $\Xi_1$.

We suppose we are given a subspace of trees $ T_0 \subset T $ associated with the right hand side of a singular SPDE defined on the space-time $\mathbb{R}_+\times\mathbb{R}^{d}$ with the mild formulation:
 \begin{equs} \label{main_equation}
 u = K * \left( F(u, \nabla u,...) \xi + G(u,\nabla u,...) \right),
 \end{equs}
where $ * $ is the space-time convolution, $ \xi  $ is  space-time white noise, $ F $ and $ G $ are nonlinearities depending on the solution $ u $ and its derivatives, and $ K $ is the heat kernel. The linear span of $ T_0 $ is denoted by $ \mathcal{T}_0 $.
  Due to the affine structure of the noise, the term $ \Xi_0  $ that encodes $ \xi $ does not appear in any meaningful formal expansion of a potential solution to equation \eqref{main_equation}; this puts a constraint on the way the decorated trees for this equation are constructed. Such constraints are formalised through the notion of a normal, complete rule. In lieu of giving a complete exposition of such rules, we illustrate them on concrete examples, as well as referring the reader to Section 5 of \cite{BHZ}. So if one considers the quasilinear parabolic equation, written in a manner commensurate with \cite{BHZ}:
\begin{equs}
\partial_ t u - \partial_x^{2} u = f(u)\partial_x^2 u + \xi, \quad (t,x) \in \mathbb{R}_+\times\mathbb{R},
\end{equs}
then decorated trees associated with this equation can be generated by the following rules $ \mathcal{R}_{\text{\tiny{qua}}} $:
\begin{equs}
\mathcal{R}_{\text{\tiny{qua}}} = \left\lbrace \left.\left( \prod_{i\in I} \mathcal{I}(\cdot)\right) \mathcal{I}_2(\cdot), \, \Xi \, \right|\, I \text{ finite set} \right\rbrace,
\end{equs}
where $ \mathcal{I}_2 $ is a short hand notation for $ \mathcal{I}_{(0,2)} $.
This means that given $ (\tau_i)_{i \in I}, \tau $ with $ I $ finite set and $ \tau_i, \tau \in T_0 $, one has
\begin{equs}
\left(\prod_{i\in I} \mathcal{I}(\tau_i)\right) \mathcal{I}_2(\tau) \in T_0, \quad \Xi \in T_0.
\end{equs}
In Section~\ref{Section::3}, we will introduce renormalisation maps $ R : \mathcal{T}_0 \rightarrow \mathcal{T}_0$ that will ask to enlarge the set of decorated trees $ T_0 $ that we started with. For quasilinear equations, we will find that the following assumptions are sufficient:
\begin{assumption}\label{assumpt3}
We assume that the preparation map R is such that
\begin{equs}
R\CQ_0 = \CQ_0R.
\end{equs}
\end{assumption}
\begin{assumption} \label{assumpt2}
For every $ \tau  \in \mathcal{T}_0$, we assume that $ R $ is such that:
\begin{equs} \label{decomposition_R}
(R - \id) \left[\left(\prod_{i\in I} \mathcal{I}(\tau_i)\right) \mathcal{I}_2(\tau)\right] = 
 \sum_{i\in J} c_i \left(\prod_{j \in K_{J}} \mathcal{I}(\tau_{i,j})\right),
\end{equs}
where $J, K_J$ are finite sets and the trees $ \tau_{i,j}$ belong to $ \mathcal{T}_0 $.
\end{assumption}
The prototypical example of a preparation map satisfying these assumptions, will be given in \eqref{eq:prepmap}.
This implies that one has to consider the new set of rules in order to consider terms of the form $ \prod_{i \in I} \mathcal{I}(\tau_i) $:
\begin{equs}
\mathcal{R}^c_{\text{\tiny{qua}}} = \left\lbrace \prod_{i\in I} \mathcal{I}(\cdot), \, \left.\left( \prod_{i\in I} \mathcal{I}(\cdot)\right) \mathcal{I}_2(\cdot),  \, \Xi\, \right| \, I \text{ is a finite set} \right\rbrace.
\end{equs}
The fact that now $ \mathcal{T}_0 $ is stable by $ R $ means that $ \mathcal{R}^c_{\text{\tiny{qua}}} $ is complete. Normality ensures that $ \mathcal{T}_0 $ is stable under the action of the coaction and coproduct introduced in Section~\ref{Section::3}. Let us also provide normal and complete rules for other known models. The generalised KPZ equation is given by:
\begin{equs}
\partial_t u - \partial_x^2 u = f(u) (\partial_x u)^2 + g(u) \xi, \quad (t,x) \in \mathbb{R}_+\times\mathbb{R}.
\end{equs}
Then, the rules $ \mathcal{R}^{c}_{\text{\tiny{gKPZ}}} $ are:
\begin{equs}
\mathcal{R}^{c}_{\text{\tiny{gKPZ}}} = \Biggl\lbrace \prod_{i\in I} \mathcal{I}(\cdot), \, &\left( \prod_{i\in I} \mathcal{I}(\cdot)\right) \mathcal{I}_1(\cdot),\, \left( \prod_{i\in I} \mathcal{I}(\cdot)\right) \mathcal{I}_1(\cdot) \mathcal{I}_1(\cdot), \\
&\left.\left( \prod_{i\in I} \mathcal{I}(\cdot)\right) \Xi\, \right| \, I \text{ is a finite set} \Biggr\rbrace,
\end{equs}
where $ \mathcal{I}_{1} $ is a short hand notation for $ \mathcal{I}_{(0,1)} $.
For the $ \varphi^{4}_3 $ equation
\begin{equs}
\partial_t u - \Delta u = u^3 +  \xi, \quad (t,x) \in \mathbb{R}_+\times\mathbb{R}^3,
\end{equs}
one has
\begin{equs}
\mathcal{R}^c_{\tiny{\varphi^{4}_3}} = \lbrace  \mathcal{I}(\cdot), \, \mathcal{I}(\cdot) \mathcal{I}(\cdot),  \mathcal{I}(\cdot) \mathcal{I}(\cdot)\mathcal{I}(\cdot), \, \Xi \rbrace.
\end{equs}
 We denote by  $ T_1 $ the set generated by decorated trees in $ T_0 $ where at most one instance of the noise $ \Xi_0 $ has been replaced by $ \Xi_1 $. We set $ \mathcal{T}_1 $ to be the linear span of $ T_1 $.
 Given a subset $ E \subset T $, we define the following set:
 \begin{equs}
 	E^{+,i} := \left\lbrace X^k \prod_{j} \mathcal{I}^{+,i}_{a_j}(\tau_j) \, \middle| \, \textsf{deg}_i(  \mathcal{I}_{a_j}(\tau_j) ) > 0, \, \tau_j \in E, \, k \in \mathbb{N}^{d+1}   \right\rbrace. 
 \end{equs}
The new symbol $ \mathcal{I}^{+,i}_{a_j} $ has been chosen in order to stress the difference between the set above and $ E $, which occurs especially when $ E = T_0 $. There is no constraint on the product at the root for the decorated trees in $ T_0^{+,i} $. The $i$ in the superscript of $ \CI^{+,i}_a $ is meant to emphasise the dependence of the projection on $\textsf{deg}_i $. The symbol can also be extended to any element of $ T $ by sending to zero the trees of negative degree:
\begin{equs}
	\CI^{+,i}_a(\tau) = 0, \quad \textsf{deg}_i(  \mathcal{I}_{a}(\tau) ) \le 0.
\end{equs}
 As there is no noise of type $ \Xi_1 $ in $ T_0 $, one can perform the following identification $ T_0^{+,0} = T_0^{+,1} $.
We denote by $ \mathcal{T}^{+,i} $ the linear span  of $ T^{+,i} $.

 We finish this section by defining a natural derivative $ D_{\Xi} $ that allows us to move from $ \mathcal{T}_0 $ into $ \mathcal{T}_1 $. It also comes with a natural assumption on the degree of the trees that contain extra instances of the noise of the form $ \Xi_1 $.
\begin{definition} \label{malliavin_derivative}
The linear map $ D_{\Xi} \,: \, \mathcal{T}_0 \rightarrow \mathcal{T}_1 $ is defined as the derivation that turns $ \Xi_0 $ of a given decorated tree into $ \Xi_1 $. It can be defined inductively on the tree structure by requiring
\begin{equs}
D_{\Xi} \left( X^k \Xi_0 \prod_{i=1}^n \mathcal{I}_{a_i}(\tau_i) \right) = X^k \Xi_1 \prod_{i=1}^n &\mathcal{I}_{a_i}(\tau_i) \\
&+ X^k \Xi_0  \sum_{j=1}^n \mathcal{I}_{a_j}(D_{\Xi} \tau_j) \prod_{i \neq j} \mathcal{I}_{a_i}(\tau_i),
\end{equs}
and also $D_{\Xi}\left(\Xi_0\right) = \Xi_1$, and $D_{\Xi}\left(X^\mbn\right) = D_{\Xi}\left({\mathbf{1}}\right)=\mathbf 0$.
\end{definition}
 
We make the following crucial assumption on the space $ \mathcal{T}_0 $:
\begin{assumption} \label{assumpt1}
For every $ \tau  \in \mathcal{T}_0$, we assume that:
\begin{equs} \label{decomposition_D_Xi}
D_{\Xi} \tau = \sum_{i} \mathcal{I}_{a'_i}(D_\Xi \tau_i') \tau_i + \Xi_1 \tau', 
\end{equs}
where the $ \tau_i, \tau_i', \tau' $ belongs to $ \mathcal{T}_0 $ and $ \tau' $ does not have any noise at the root. We suppose that
\begin{equs} \label{degree_positive}
\textnormal{\textsf{deg}}_0( \mathcal{I}_{a'_i}(D_\Xi \tau_i') ) > 0,
\end{equs}
where in the later identity, we have made an abuse of notation as $ \textnormal{\textsf{deg}}_0 $ is not defined for a linear combination of decorated trees. Here, we know that $ D_\Xi \tau_i'  $ is a linear combination of decorated trees of the same degree. Therefore,  we denote by $ 
\textnormal{\textsf{deg}}_0( \mathcal{I}_{a'_i}(D_\Xi \tau_i') )  $ their degree.
\end{assumption}
\section{Renormalised Models}

\label{Section::3}

The first step towards being able to reproduce the arguments in $\cite{LOTP}$ at the level of trees, is to construct algebraic formulation of the Malliavin Derivatives $\delta\Pi$ and $\delta\Gamma$ employed there, commensurate to decorated trees. For this, we will need the linear, multiplicative coactions $ \Delta_i : \CT_i \rightarrow \CT_i \otimes \CT^{+,i} $ that can be traced back to Hairer's seminal paper \cite{reg}, where these co-actions allowed for the construction of a regularity structure to which (a modified version) of the noise is lifted.
\begin{equation} \label{coaction}
\begin{split}
&\Delta_i \mathbf{1} \coloneqq \mathbf{1} \otimes \mathbf{1},\qquad \Delta_i X_j = X_j \otimes \mathbf{1} + \mathbf{1}\otimes X_j, \\
& \Delta_0\Xi_0 = \Xi_0\otimes\one,\hspace{5.5mm}\Delta_1 \Xi_i \coloneqq \Xi_i \otimes \mathbf{1},\\
&\Delta_i(\mcI_\alpha\tau) \coloneqq (\mcI_\alpha\otimes \Id )\Delta_i \tau + \hspace{-4mm}\sum_{\vert\ell +m\vert < \textsf{deg}_i(\mcI_\alpha\tau)} \frac{X^\ell}{\ell!}\otimes \frac{X^m}{m!}\mcI^{+,i}_{\alpha+\ell+m}(\tau).
\end{split}
\end{equation}
 By construction we have on it a following triangular structure 
\begin{equs}\label{eq:sweedlernotation}\Delta_i\tau = \tau\otimes\mathbf{1} + \sum_{(\tau)}\tau^{(1)}\otimes\tau^{(2)}\end{equs} such that $\textsf{deg}_i(\tau^{(1)})<\textsf{deg}_i(\tau)$.

We also supplement these coactions with coproducts $\Delta^{+}_{i}:\CT^{+,i}\rightarrow \CT^{+,i}\otimes \CT^{+,i}$.
\begin{equs} \label{def_delta_+} \begin{aligned}
\Delta^{+}_{i}(\tau\bar\tau)&=\Delta^{+}_{i}(\tau)\Delta^{+}_{i}(\bar\tau) \\
\Delta^{+}_{i}\left(\CI_\alpha^{+,i}\tau\right)&=\sum_{|\ell|<\textsf{deg}_i(\CI_\alpha(\tau))}\left(\CI^{+,i}_{\alpha+\ell}\otimes\frac{(-X)^{\ell}}{\ell!}\right)\Delta\tau+\one\otimes\CI^{+,i}_{\alpha}\tau.
\end{aligned}
\end{equs}
\color{black}
 In the sequel, we will denote $ \mathcal{T}^{+,0} $ by $ \mathcal{T}^+ $.
\begin{remark}
An alternative definition of \eqref{coaction} has been introduced in \cite{BHZ} where the last identity is replaced by
\begin{equs}
\Delta_i(\mcI_a\tau) \coloneqq (\mcI_a\otimes \Id )\Delta_i \tau + \hspace{-4mm}\sum_{\vert\ell \vert < \textsf{deg}_i(\mcI_a\tau)} \frac{X^\ell}{\ell!}\otimes \mcI^{+,i}_{a+\ell}(\tau).
\end{equs}
which corresponds to a change of basis: $ \tilde{\mcI}^{+,i}_{a+\ell}(\tau) =\sum_{m} \frac{X^m}{m!} \mcI^{+,i}_{a+\ell+m}(\tau)$. We have chosen the original formulation coming from \cite{reg} because one gets simple formulae for the map $ \hat{\Delta}_0 $ that we introduce in the sequel.
\end{remark}
The following proposition will be useful later.

\begin{proposition}\label{prop:comDdelta_0}
    One has for all $p\in\mathbb{N}^{d+1}$
    \begin{equs}\label{eq:comDdelta_0}
        \left(\CD_{p}\otimes\Id\right)\Delta_0 = \Delta_0\CD_{p}.
    \end{equs}
\end{proposition}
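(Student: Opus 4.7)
The plan is to proceed by structural induction on decorated trees. Using the representation of a general tree as $\tau = X^k\Xi_0^l\Xi_1^m\prod_i\CI_{a_i}(\tau_i)$, the multiplicativity of $\Delta_0$ together with the Leibniz rule obeyed by $\CD_p$ reduce the problem to checking the identity on the generators (the unit $\mathbf{1}$, monomials $X^q$, and the noise symbols $\Xi_i$) and to carrying out a single inductive step across the grafting operation $\sigma\mapsto\CI_\alpha\sigma$.

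The base cases are immediate. For $\tau = \mathbf{1}$ and $\tau = \Xi_i$, both of which are annihilated by $\CD_p$ when $p\neq 0$ and fixed when $p = 0$, the identity reduces to the definition of $\Delta_0$ on these symbols. For $\tau = X^q$ one computes both sides explicitly from $\Delta_0 X^q = \sum_{k\le q}\binom{q}{k}X^k\otimes X^{q-k}$, and the match reduces to the combinatorial identity $\binom{q}{k+p}\tfrac{(k+p)!}{k!} = \tfrac{q!}{(q-p)!}\binom{q-p}{k}$. The multiplicative step is similarly mechanical: given $\tau = \sigma_1\sigma_2$, one applies $\Delta_0$ to the Leibniz expansion $\CD_p(\sigma_1\sigma_2) = \sum_{p_1+p_2=p}\binom{p}{p_1}\CD_{p_1}\sigma_1\cdot\CD_{p_2}\sigma_2$, invokes the inductive hypothesis on each factor, and then redistributes the Leibniz sum over the first tensor slot.

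The substance of the proof lies in the inductive step on $\tau = \CI_\alpha\sigma$, where one applies $(\CD_p\otimes\Id)$ separately to the two terms in the recursive formula \eqref{coaction} for $\Delta_0(\CI_\alpha\sigma)$. The ``internal'' part $(\CI_\alpha\otimes\Id)\Delta_0\sigma$ transforms into $(\CI_{\alpha+p}\otimes\Id)\Delta_0\sigma$ via $\CD_p\CI_\alpha = \CI_{\alpha+p}$, which is precisely the internal part of $\Delta_0(\CI_{\alpha+p}\sigma) = \Delta_0\CD_p(\CI_\alpha\sigma)$. For the Taylor remainder, applying $\CD_p$ to the $X^\ell$ factor restricts the sum to $\ell \ge p$ and, after reindexing $\ell' = \ell - p$, produces summands $\tfrac{X^{\ell'}}{\ell'!}\otimes\tfrac{X^m}{m!}\CI^{+,0}_{\alpha+p+\ell'+m}(\sigma)$ constrained by $|\ell'+m+p| < \textsf{deg}_0(\CI_\alpha\sigma)$.

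The single subtle point, really the heart of the argument, is that this range of summation must coincide with the one appearing in the Taylor remainder of $\Delta_0(\CI_{\alpha+p}\sigma)$, namely $|\ell'+m| < \textsf{deg}_0(\CI_{\alpha+p}\sigma)$. This comes down to the affine identity $\textsf{deg}_0(\CI_{\alpha+p}\sigma) = \textsf{deg}_0(\CI_\alpha\sigma) - |p|$, which is immediate from the defining formula $\textsf{deg}_0(\CI_\alpha\tau) = \textsf{deg}_0(\tau) + 2 - |\alpha|$. Once the ranges are put in bijection via $\ell \leftrightarrow \ell' = \ell - p$, the two Taylor remainders agree term by term and the induction closes.
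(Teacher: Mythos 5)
Your proof is correct and follows essentially the same route as the paper's: verify the identity on the generators, reduce products via the Leibniz structure of $\CD_p$ together with multiplicativity of $\Delta_0$, and close the argument on planted trees by reindexing the Taylor sum, with the degree shift $\textsf{deg}_0(\CI_{\alpha+p}\sigma)=\textsf{deg}_0(\CI_\alpha\sigma)-|p|$ (used implicitly in the paper's reindexing step) made explicit. Your treatment of the product case, redistributing the generalized Leibniz sum over the first tensor slot, is if anything slightly more careful than the paper's corresponding step, and the explicit binomial check on $X^q$ is a harmless variant of the paper's appeal to multiplicativity from the single-letter case $X_i$.
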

\begin{proof}
The proposition is easily confirmed for $\{\one,\Xi_0,X_i\}$ via elementary computations. The next thing to notice is that the expressions on either side of \eqref{eq:comDdelta_0} are multiplicative:
\begin{equs}
\Delta_0\CD_{p}(\tau\tau') = \left(\CD_{p}\otimes\Id\right)\Delta_0(\tau\tau') &=\left(\CD_{p}\otimes\Id\right)\Delta_0(\tau)\left(\CD_{p}\otimes\Id\right)\Delta_0(\tau') \\
 &= \Delta_0\CD_{p}(\tau)\Delta_0\CD_{p}(\tau')  .
\end{equs}
From here one can extend the result to $T$ once one has the result for planted trees. For that one checks:
\begin{equs}
	\left(\CD_{p}\otimes\Id\right)\Delta_0\CI_\alpha\tau &= \left(\CD_{p}\otimes\Id\right)\Biggl[(\mcI_\alpha\otimes \Id )\Delta_0 \tau \\
	&\hphantom{aaaaaa}+\hspace{-3mm}\sum_{\vert\ell +m\vert < \textsf{deg}_0(\mcI_\alpha\tau)} \frac{X^\ell}{\ell!}\otimes \frac{X^m}{m!}\mcI^{+,0}_{\alpha+\ell+m}(\tau)\Biggr] \\
	&= (\CI_{(\alpha + p)}\otimes\Id)\Delta_0\tau \\
	&\hphantom{aaaaaa}+\hspace{-3mm}\sum_{\vert\ell +m\vert < \textsf{deg}_0(\mcI_\alpha\tau)} \frac{X^{\ell-p}}{(\ell-p)!}\otimes \frac{X^m}{m!}\mcI^{+,0}_{\alpha+p+\ell-p+m}(\tau)
	\\
	&\overset{(\ell'\coloneqq\ell - p)}{=} (\CI_{(\alpha + p)}\otimes\Id)\Delta_0\tau \\
	&\hphantom{aaaaaa}+\hspace{-3mm}\sum_{\vert\ell' +m\vert < \textsf{deg}_0(\mcI_{(\alpha+p)}\tau)} \frac{X^{\ell'}}{(\ell')!}\otimes \frac{X^m}{m!}\mcI^{+,0}_{\alpha+p+\ell'+m}(\tau) \\
	&=\Delta_0\CI_{(\alpha+p)}\tau = \Delta_0\CD_p\CI_\alpha\tau.
\end{equs}
\end{proof}
The next component needed in our construction of the aforementioned quantities is that of a preparation map $ R $ for the degree $  \textsf{deg}_i $. Such maps are fundamental integrants in the construction of renormalisation maps, and as such their utility here, should not come as a surprise. We recall the relevant definition from \cite{BR18}, where it was first introduced.
\begin{definition}
 \label{DefnPreparationMap}
A \textbf{preparation map} is a linear map $
R : \CT \rightarrow \CT $ 
that fixes polynomials, noises, planted trees, and such that 
\begin{itemize}
   \item for each $ \tau \in \mathcal{T} $ there exist finitely many $\tau_j \in \mathcal{T}$ and constants $\lambda_j$ such that
\begin{equation} \label{EqAnalytical}
R \tau = \tau + \sum_j \lambda_j \tau_j, \quad\textrm{with}\quad \textsf{deg}_1(\tau_j) \geq \textsf{deg}_1(\tau) \quad\textrm{and}\quad |\tau_j|_{\Xi} < |\tau|_{\Xi},
\end{equation} 
   \item one has 
 \begin{equation} \label{EqCommutationRDelta}
 ( R \otimes \Id) \Delta_i = \Delta_i R.
 \end{equation}
 \item The map $ R $ commutes with $ D_{\Xi} $ in the sense that one has on $ \mathcal{T}_0 $:
 \begin{equs} \label{commute_D_Xi}
 R D_{\Xi} = D_{\Xi} R.
 \end{equs}
\end{itemize}
\end{definition}

The commutation property \eqref{commute_D_Xi} is where we differ from the original definition in the literature. It is meant to reflect the fact that by changing a noise $ \xi $ in some tree into an infinitesimal perturbation of itself $ \delta \xi $, it becomes less singular and therefore does not require any renormalisation. 
One natural choice for the preparation map $ R $ is given in \cite{BR18,BB21} by:
\begin{equs}\label{eq:prepmap}
R_{\ell}^{*} \tau =  \sum_{\sigma \in T^-_0} \frac{\ell(\sigma)}{S(\sigma)} \sigma \star \tau,
\end{equs}
where $ S(\sigma) $ is a symmetry factor associated to $ \sigma $, $ T_{0}^-  $ are elements in $ T_{0} $ with negative degree using $ \textsf{deg}_0 $, the associative product $ \star $ is related to the dual of $ \Delta_i $ and $ \ell : T_0^{-} \rightarrow \mathbb{R} $, which provides the renormalisation constants, is chosen in such a manner so that $ R_{\ell}$ has the various properties of a preparation map. The map $ R_{\ell} $ whose dual map is $ R_{\ell}^{*} $ has been first introduced in \cite{BR18} as an extraction/contraction procedure of a subtree with negative degree happening at the root.
With a preparation map $R$ fixed, one is able to define the corresponding renormalised model $ \Pi^{R,i}_x $ for $ i \in \lbrace 0,1 \rbrace $ inductively as follows:
\begin{alignat}{2} \label{recursive_model}
&\Pi_x^{R,i} \tau  = \hat{ \Pi}^{R,i}_x R \tau, &&\hat{{ \Pi}}^{R,i}_x (\tau \bar \tau) = (\hat{{ \Pi}}^{R,i}_x \tau) \, (\hat{{\Pi}}^{\!R,i}_x\bar \tau), \nonumber \\ 
&\big(\hat { \Pi}_x^{R,i}(\mathcal{I}_a\tau)\big)(y)= \big(D^{a} K &&* { \Pi}_x^{R,i} \tau\big)(y)\nonumber  \\ &  &&- \sum_{|k| < \textsf{deg}_i(\mathcal{I}_a\tau) }\frac{(y-x)^k}{k!}  \big(D^{a +k}  K * { \Pi}_x^{R,i} \tau\big)(x),
\end{alignat}
where $ K $ is now a compactly support function obtained from the kernel associated with the singular SPDEs considered, with the seed given by:

\begin{equs}
\Pi_x^{R,0}\Xi_0 = \xi,\quad&\Pi_x^{R,1}\Xi_0=\xi,\quad\Pi_{x}^{R,1}\Xi_1=\delta\xi,\quad\Pi_x^{R,i}X^{k} = (\cdot - x)^{k}, \\
&\hat{\Pi}^{R,i}_{x}\tau = \Pi_x^{R,i}\tau,\quad\text{for }\tau\in\{\Xi_0,\Xi_1\}\cup\bar{T}.
\end{equs}
\color{black}
This recursive definition was first introduced in \cite{BR18}. The main idea is that one needs to cure first a local divergence with $ R $. Then by multiplicativity one iterates $ R $ deeper inside the iterated integral.
Notice that the models depend on the choice of the degree $ \textsf{deg}_i $ so a difference manifests in the length of the Taylor expansions, in particular when the noise $ \Xi_1 $ comes into account, one gets longer Taylor expansions.
One has the following analytical bound:
\begin{equs}
	\left|\left(\Pi^{R,i}_x \tau \right)(y)\right| \lesssim |y-x|^{\textsf{deg}_i(\tau)},
\end{equs}
that depends on the choice of degree. The proof is verbatim the same as the one given in \cite{BR18}.

\begin{remark}\label{rem:derivativecommutativity}
From \eqref{recursive_model}, one learns that the multi-index subscript $a$ in $\mathcal{I}_a(\tau)$ effectively keeps a count of the derivatives on kernel on the abstract side. In light of this, and the distributivity of derivatives on convolutions, we have the commutativity property:
\begin{equs}
\hat\Pi^{R,i}_{x}\mathcal{I}_a(\tau)=\hat\Pi^{R,i}_{x}\CD^\alpha\mathcal{I}(\tau)=\partial^{a}\hat\Pi^{R,i}_{x}\mathcal{I}(\tau),
\end{equs}
where the second equality is obvious from the properties of the derivative and \eqref{recursive_model}. Furthermore, due to invariance of the preparation map on planted trees, we have that
\begin{equs}
\Pi^{R,i}_{x}\mathcal{I}_a(\tau)&=\hat\Pi^{R,i}_{x}R\mathcal{I}_a(\tau) = \hat\Pi^{R,i}_{x}\mathcal{I}_a(\tau)\\
&= \partial^{a}\hat\Pi^{R,i}_{x}\mathcal{I}(\tau)\label{eq:derivativecommutativity} = \partial^{a}\hat\Pi^{R,i}_{x}R\mathcal{I}(\tau) = \partial^{a}\Pi^{R,i}_{x}\mathcal{I}(\tau).
\end{equs}

\end{remark}
\begin{remark} \label{invariant_x}
The use of a preparation map also allows us to treat the case of a local renormalisation meaning that we are able to construct the renormalisation for a model based on kernels $ K $ that are non-translation invariant. This was first noticed in \cite{BB21b} where one can define a preparation map $ R(\cdot) $ depending on a given space-time point and check the same properties listed in Definition~\ref{DefnPreparationMap}. In this case, we define the renormalised model as:
\begin{equs}
(\Pi_x^{R,i} \tau)(y)  = (\hat{ \Pi}^{R,i}_x R(y) \tau)(y), \quad (\PPi^R \tau)(y)  = (\hat{\PPi} \,  R(y) \tau)(y).
\end{equs}
The only difference with the translation invariant case is the replacement of $ R $ by $ R(y) $. In the sequel, all the algebraic identities are robust to this new formulation. Therefore, the identities that we derive in the end are similar in this framework.
\end{remark}

An alternate construction of the renormalised model is due to the relationship between the (renormalised) model $\Pi^{R,i}_{x}$ and the pre-model $\PPi^R$, and the manner in which $\Delta_i$ are defined:
\begin{equs}\label{eq:equality1}
\Pi_x^{R,i} = \left( \PPi^R \otimes f_x^{R,i} \right) \Delta_i,
\end{equs}
where $ \PPi^R $ the pre-model and $f_x^{R,i}$ are defined by the following recursive formulae:
\begin{equs} \label{def_pre_model}
\PPi^R \tau &= \hat{\PPi}^R \,  R \tau, \quad \hat{\PPi}^{R} \tau \bar{\tau} = \hat{\PPi}^{R} \tau  \, \hat{\PPi}^{R} \bar{\tau}, \\
&\big(\hat{ \PPi}^R(\mathcal{I}_a\tau)\big) = \big(D^{a} K * {\PPi}^{R} \tau\big).
\end{equs}
with the seed
\begin{equs}
\PPi^{R}\Xi_0 = \hat\PPi^{R}\Xi_0 &= \xi,\quad\PPi^{R}\Xi_1 = \hat\PPi^{R}\Xi_1 = \delta\xi, \\
&\PPi^R X^{k} = \hat\PPi^{R} X^k= \bullet^{k},
\end{equs} \color{black}
 and
\begin{equation}\label{eq:recf}\begin{split}
f_x^{R,i}(\mathbf{1})&=1,\qquad f_x^{R,i}(X_j) = -x_j\mbox{ for }j\in\{0,\hdots,d\},\\
&f_x^{R,i}(\CI_a^{+,i}(\tau))= - (D^a K * (\Pi_x^{R,i} \tau))(x),
\end{split}
\end{equation}
respectively, and then are extended multiplicatively. The reader may refer to \cite[Rem. 8.31]{reg} for a more thorough explanation.

\begin{remark}
It should be noted that a model in the regularity structure paradigm is a pair of continuous linear maps $(\Pi,\Gamma)$ where the components satisfy certain axioms (see \cite[Def. 2.1]{reg}) including $ \Pi_y = \Pi_x\Gamma_{yx}$. $\Pi^{R,i}$ is a renormalised \textit{model} exactly in the sense of being the first component in a pair that satisfies the previously cited axioms. As the exact construction of the corresponding $\Gamma^{R,i}_{xy}$ is not necessary we do not expend any energy in its description, but we do note the utility of the coactions $\Delta_i$ in its definition:
\begin{equs}\label{eq:gammacoaction}
\Gamma^{R,i}_{yx} = (\Id\otimes\gamma^{R,i}_{yx})\Delta_i,
\end{equs}
where $\gamma_{yx}^{R,i}$ is a character on a particular collection of trees. For a complete exposition, we refer to the discussion surrounding \cite[Eq. 8.17]{reg}.
\end{remark}
\begin{remark}
Notice that \eqref{eq:gammacoaction} suggests that the commutativity that the coactions enjoy with the preparation map, filters through to commutativity with $\Gamma^{R,i}$. Indeed we have due to \eqref{EqCommutationRDelta}
\begin{equs}\label{eq:Gammacommute}
\Gamma^{R,i}_{yx}R &= (\Id\otimes\gamma^{R,i}_{yx})\Delta_i R \\
&= (\Id\otimes\gamma^{R,i}_{yx})(R\otimes\Id)\Delta_i \\
&= (R\otimes\Id)(\Id\otimes\gamma^{R,i}_{yx})\Delta_i \\
& = R\Gamma^{R,i}_{yx}.
\end{equs}
It is easy to see that this commutativity and $\Pi^{R,i}_y = \Pi^{R,i}_x\Gamma^{R,i}_{yx}$ gives us $\hat\Pi^{R,i}_y = \hat\Pi^{R,i}_x\Gamma^{R,i}_{yx}$.
\end{remark}

Given that the definitions of $\Pi_x^{R,1}$, $\Pi_x^{R,0}$ differ only in the length of Taylor expansions, it is natural to suspect the existence of some mapping that connects $\Pi_x^{R,1}$, $\Pi_x^{R,0}$. In the following proposition, we show how a linear coaction $ \hat{\Delta}_0:\CT\rightarrow\CT\otimes\CT^{+}$ from the renormalisation literature can be co-opted to produce a curtailing procedure that achieves this.
\begin{proposition}\label{prop:magicformula} The following identity holds on $ \mathcal{T}_1 $
\begin{equation} \label{magicformula}
\Pi_x^{R,1} = \left( \Pi_x^{R,0}\otimes f_x^{R,0} \right) \hat{\Delta}_0,
\end{equation}
for the map $\hat{\Delta}_0$ defined recursively as follows:
	\begin{align}
\hat{\Delta}_{0} (\bullet) &\coloneqq \bullet\otimes \mathbf{1}, \quad \textrm{ for }\bullet\in\big\{\mathbf{1}, X_j, \Xi_i\big\} \label{EqDefnDeltaHatCirc}, \\
\hat{\Delta}_{0} (\mcI_a\tau) &\coloneqq (\mcI_a\otimes\textnormal{Id})\hat{\Delta}_{0} (\tau) \nonumber\\
   &- \sum_{\textsf{deg}_1(\mcI_a\tau) \leq \vert\ell\vert} \frac{X^\ell}{\ell!}\otimes\mcM^+\big(\mcI^{+,0}_{a+\ell}\otimes\textnormal{Id}\big)\hat{\Delta}_{0} (\tau), \label{EqDefnDeltaHatCirc}
   \end{align}
\noindent for $i\in\{0,1\}$ and $j\in\{0,\hdots,d\}$, where $ \mathcal{M}^+ $ is the multiplication map on $ \CT^+ $ and finally extended multiplicativity to all decorated trees in $ \mathcal{T}_1 $.
\end{proposition}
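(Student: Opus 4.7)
The plan is to induct on the size of $\tau\in T_1$. The base cases $\tau\in\{\mathbf{1}, X_j, \Xi_0, \Xi_1\}$ are immediate from $\hat{\Delta}_0\tau = \tau\otimes\mathbf{1}$, $f_x^{R,0}(\mathbf{1})=1$, and the seed values of the models. For trees of the form $\tau\bar\tau$, both sides of \eqref{magicformula} are multiplicative: the left-hand side because $\Pi_x^{R,1} = (\PPi^R\otimes f_x^{R,1})\Delta_1$ is a composition of multiplicative maps via \eqref{eq:equality1}, and the right-hand side because $\hat{\Delta}_0$ is extended multiplicatively while $\Pi_x^{R,0}$ and $f_x^{R,0}$ are multiplicative. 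This reduces the inductive step to the case of a planted tree $\mathcal{I}_a(\tau)$.

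For the planted tree case, using that $R$ fixes planted trees, I expand
\begin{equs}
\Pi_x^{R,1}\mathcal{I}_a(\tau)(y) = (D^aK*\Pi_x^{R,1}\tau)(y) - \sum_{|k|<\textsf{deg}_1(\mathcal{I}_a\tau)}\frac{(y-x)^k}{k!}\bigl(D^{a+k}K*\Pi_x^{R,1}\tau\bigr)(x).
\end{equs}
Writing $\hat{\Delta}_0\tau = \sum_{(\tau)}\tau^{(1)}\otimes\tau^{(2)}$ and invoking the induction hypothesis gives $\Pi_x^{R,1}\tau = \sum_{(\tau)}\Pi_x^{R,0}\tau^{(1)}\, f_x^{R,0}(\tau^{(2)})$. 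Next I re-express each convolution $D^aK*\Pi_x^{R,0}\tau^{(1)}$ using the definition of the model at the planted tree level, namely
\begin{equs}
D^aK*\Pi_x^{R,0}\tau^{(1)} = \Pi_x^{R,0}\mathcal{I}_a(\tau^{(1)}) + \sum_{|j|<\textsf{deg}_0(\mathcal{I}_a\tau^{(1)})}\frac{(y-x)^j}{j!}\bigl(D^{a+j}K*\Pi_x^{R,0}\tau^{(1)}\bigr)(x),
\end{equs}
and apply the identity $\bigl(D^{a+k}K*\Pi_x^{R,0}\tau^{(1)}\bigr)(x) = -f_x^{R,0}(\mathcal{I}^{+,0}_{a+k}(\tau^{(1)}))$ from \eqref{eq:recf}. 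The $\Pi_x^{R,0}\mathcal{I}_a(\tau^{(1)})$ contributions assemble into $(\Pi_x^{R,0}\otimes f_x^{R,0})(\mathcal{I}_a\otimes \Id)\hat{\Delta}_0\tau$, matching the first summand of $(\Pi_x^{R,0}\otimes f_x^{R,0})\hat{\Delta}_0\mathcal{I}_a(\tau)$. After collecting the two Taylor-type sums and performing the cancellation between the $|j|<\textsf{deg}_0(\mathcal{I}_a\tau^{(1)})$ and $|k|<\textsf{deg}_1(\mathcal{I}_a\tau)$ ranges, what remains reassembles into
\begin{equs}
-\sum_{\textsf{deg}_1(\mathcal{I}_a\tau)\leq|\ell|}\frac{(y-x)^\ell}{\ell!}\, f_x^{R,0}\bigl(\mathcal{M}^+(\mathcal{I}^{+,0}_{a+\ell}\otimes\Id)\hat{\Delta}_0\tau\bigr),
\end{equs}
which is exactly the image under $\Pi_x^{R,0}\otimes f_x^{R,0}$ of the correction prescribed by \eqref{EqDefnDeltaHatCirc}.

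The main obstacle is the bookkeeping of the Taylor truncation ranges: one must verify that the ``natural'' cutoff $|j|<\textsf{deg}_0(\mathcal{I}_a\tau^{(1)})$ coming from the definition of $\Pi_x^{R,0}\mathcal{I}_a(\tau^{(1)})$ combines cleanly with the cutoff $|k|<\textsf{deg}_1(\mathcal{I}_a\tau)$ in $\Pi_x^{R,1}\mathcal{I}_a(\tau)$ to produce the range $\textsf{deg}_1(\mathcal{I}_a\tau)\leq|\ell|$ appearing in \eqref{EqDefnDeltaHatCirc}. The built-in vanishing $\mathcal{I}^{+,0}_{a+\ell}(\sigma)=0$ whenever $\textsf{deg}_0(\mathcal{I}_{a+\ell}\sigma)\leq 0$ supplies the implicit upper bound, so the difference of the two cutoff sums yields precisely the recursive correction. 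Philosophically, $\hat{\Delta}_0$ is exactly the gadget that curtails the longer Taylor expansions enforced by $\textsf{deg}_0$ into the shorter ones demanded by $\textsf{deg}_1$, which is why the identity \eqref{magicformula} is ``$c$-free'' in the sense of \cite{LOTP}.
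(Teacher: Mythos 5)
Your base case and your planted-tree computation track the paper's proof closely (including the key point that the mismatched Taylor cutoffs are reconciled by the vanishing convention for $\mcI^{+,0}_{a+\ell}$ together with the triangular structure $\textsf{deg}_0(\hat\tau^{(1)})\ge\textsf{deg}_1(\hat\tau^{(1)})\ge\textsf{deg}_1(\tau)$), but your reduction of the inductive step to planted trees rests on a false claim: neither side of \eqref{magicformula} is multiplicative. By \eqref{recursive_model} one has $\Pi_x^{R,1}(\tau\bar\tau)=\hat\Pi_x^{R,1}R(\tau\bar\tau)$, and $R$ is not multiplicative --- by \eqref{EqAnalytical} it adds counterterms $\sum_j\lambda_j\tau_j$ --- so the renormalised model does not factor over the tree product; for the same reason $\PPi^R=\hat{\PPi}^R R$ is not multiplicative, so \eqref{eq:equality1} is not a composition of multiplicative maps, and the right-hand side $(\Pi_x^{R,0}\otimes f_x^{R,0})\hat\Delta_0$ fails to be multiplicative as well. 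Consequently your induction never reaches the elements of $\mathcal{T}_1$ of product type on which $R$ acts nontrivially (e.g.\ $\mcI(\tau_1)\mcI_2(\tau_2)$ in the quasilinear rule), and the argument does not close.

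The paper avoids this by running the induction concurrently for the hatted identity $\hat\Pi_x^{R,1}=(\hat\Pi_x^{R,0}\otimes f_x^{R,0})\hat\Delta_0$, where multiplicativity is legitimate since $\hat\Pi_x^{R,i}$, $f_x^{R,0}$ and $\hat\Delta_0$ are all multiplicative, with the planted-tree step using the unhatted induction hypothesis for the subtree exactly as you do; the unhatted identity is then recovered by writing $\Pi_x^{R,1}=\hat\Pi_x^{R,1}R$ and commuting $R$ past $\hat\Delta_0$ via Lemma~\ref{lem:req2}, i.e.\ $(R\otimes\id)\hat\Delta_0=\hat\Delta_0R$ (which in turn rests on Proposition~\ref{lem:delhatdel} and \eqref{EqCommutationRDelta}), and finally using $\hat\Pi_x^{R,0}R=\Pi_x^{R,0}$. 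This two-level structure distinguishing $\Pi_x^{R,i}$ from $\hat\Pi_x^{R,i}$, together with the commutation \eqref{commutation_Delta1_R}, is the ingredient missing from your proposal; once you state and prove the hatted identity and append the paper's final chain of equalities, your planted-tree computation carries the rest.
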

To prove this proposition, we have to collect a lemma first, and to prove this requisite lemma, we need the following result which explains the relationship between $\Delta_0$ and $\hat\Delta_0$.
\begin{proposition}\label{lem:delhatdel} There exists a linear, multiplicative map $\hat\Gamma_0$ on the decorated trees such that
\begin{equation}\label{eq:delhatdel}
\hat \Delta_0 = (\textnormal{Id}\otimes\hat\Gamma_0)\Delta_0.
\end{equation}
\end{proposition}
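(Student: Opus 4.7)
The plan is to construct $\hat\Gamma_0:\CT^+\to\CT^+$ as the multiplicative extension of prescribed values on the generators $X_j$ and $\mcI^{+,0}_{b}(\tau)$, with these values forced so that the identity $\hat\Delta_0=(\textnormal{Id}\otimes\hat\Gamma_0)\Delta_0$ holds. Since $\Delta_0$ and $\hat\Delta_0$ are both multiplicative, and the composite $(\textnormal{Id}\otimes\hat\Gamma_0)\Delta_0$ inherits multiplicativity from $\hat\Gamma_0$, it suffices to verify the identity on the multiplicative generators $\mathbf{1},\,X_j,\,\Xi_0,\,\Xi_1,\,\mcI_a(\tau)$ of $\CT_1$ and proceed by induction on tree size. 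Extending $\Delta_0$ to $\CT_1$ by $\Delta_0\Xi_1=\Xi_1\otimes\mathbf{1}$, the identity at $\{\mathbf{1},\Xi_0,\Xi_1\}$ is immediate and forces $\hat\Gamma_0(\mathbf{1})=\mathbf{1}$, while matching at $X_j$ gives $\hat\Gamma_0(X_j)=0$. Multiplicativity then yields $\hat\Gamma_0(X^k)=\delta_{k,0}\mathbf{1}$, which is the source of the simplification in the next step.

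For the inductive step on $\mcI_a(\tau)$, using the induction hypothesis $(\textnormal{Id}\otimes\hat\Gamma_0)\Delta_0(\tau)=\hat\Delta_0(\tau)$ together with $\hat\Gamma_0(X^m)=\delta_{m,0}\mathbf{1}$, the only contributions from the sum in $\Delta_0(\mcI_a\tau)$ that survive the $(\textnormal{Id}\otimes\hat\Gamma_0)$ application come from $m=0$:
\begin{equs}
(\textnormal{Id}\otimes\hat\Gamma_0)\Delta_0(\mcI_a\tau)=(\mcI_a\otimes\textnormal{Id})\hat\Delta_0(\tau)+\sum_{|\ell|<\textnormal{\textsf{deg}}_0(\mcI_a\tau)}\frac{X^\ell}{\ell!}\otimes\hat\Gamma_0\bigl(\mcI^{+,0}_{a+\ell}(\tau)\bigr).
\end{equs}
Matching coefficients of $X^\ell/\ell!$ against the defining formula for $\hat\Delta_0(\mcI_a\tau)$, and writing $b=a+\ell$, forces the prescription
\begin{equs}
\hat\Gamma_0\bigl(\mcI^{+,0}_{b}(\tau)\bigr)=\begin{cases}0,& \textnormal{\textsf{deg}}_1(\mcI_b\tau)>0,\\[2pt] -\mathcal{M}^+\bigl(\mcI^{+,0}_{b}\otimes\textnormal{Id}\bigr)\hat\Delta_0(\tau),& \textnormal{\textsf{deg}}_1(\mcI_b\tau)\le 0,\end{cases}
\end{equs}
which by the inductive hypothesis on $\tau$ is well-defined in terms of data already constructed.

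The main obstacle lies in verifying consistency: a given generator $\mcI^{+,0}_{b}(\tau)$ arises as $\mcI^{+,0}_{a+\ell}(\tau)$ for many pairs $(a,\ell)$ with $a+\ell=b$, and the definition of $\hat\Gamma_0$ must not depend on this decomposition. Since $\ell\in\mathbb{N}^{d+1}$ forces $b\geq a$ componentwise, one has $|\ell|=|b|-|a|$ in the scaled norm, so the condition $|\ell|<\textnormal{\textsf{deg}}_1(\mcI_a\tau)=\textnormal{\textsf{deg}}_1(\tau)+2-|a|$ is equivalent to $|b|<\textnormal{\textsf{deg}}_1(\tau)+2$, i.e.\ to $\textnormal{\textsf{deg}}_1(\mcI_b\tau)>0$. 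The same cancellation eliminates $|a|$ from the upper bound $|\ell|<\textnormal{\textsf{deg}}_0(\mcI_a\tau)$, while in the range $|\ell|\ge\textnormal{\textsf{deg}}_0(\mcI_a\tau)$ both sides vanish automatically thanks to the triangular structure \eqref{eq:sweedlernotation} and the convention $\mcI^{+,0}_c(\sigma)=0$ when $\textnormal{\textsf{deg}}_0(\mcI_c\sigma)\le 0$. The trichotomy therefore depends only on $b$ and $\tau$, so $\hat\Gamma_0$ is unambiguously defined; extending multiplicatively and feeding the identity back into the recursion closes the induction and yields $\hat\Delta_0=(\textnormal{Id}\otimes\hat\Gamma_0)\Delta_0$ on all of $\CT_1$.
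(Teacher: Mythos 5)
Your proposal is correct and follows essentially the same route as the paper: prescribe $\hat\Gamma_0$ to vanish on $X_j$, force its value on the planted generators by the indicator $\mathbf{1}_{\{\textnormal{\textsf{deg}}_1(\mcI_b\tau)\le 0\}}$, extend multiplicatively, and close an induction on tree size. The only difference is cosmetic — the paper defines $\hat\Gamma_0$ on the change-of-basis elements $\tilde{\mcI}^{+,0}_a(\tau)=\sum_\ell\frac{X^\ell}{\ell!}\mcI^{+,0}_{a+\ell}(\tau)$ while you define it directly on $\mcI^{+,0}_b(\tau)$ and verify consistency in $b=a+\ell$ (and you are in fact a bit more explicit than the paper about why the terms with $|\ell|\ge\textnormal{\textsf{deg}}_0(\mcI_a\tau)$ drop out).
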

\begin{proof}
We begin with the following limited characterisation of $\hat \Gamma_0$,
\begin{equs}
\hat{\Gamma}_0 X_i \coloneqq \mathbf{0},\quad\hat\Gamma_0 \tau = \tau,\text{ for }\tau\in\{\one,\Xi_0,\Xi_1\}.
\end{equs}
It is easy to see then that \eqref{eq:delhatdel} holds true for $\tau\in\{\mathbf{1},X_i,\Xi_0,\Xi_1\}$. We then extend the definition above by
\begin{equs}
&\hat{\Gamma}_0 \left( \sum_{\ell} \frac{X^{\ell}}{\ell !} \mathcal{I}^{+,0}_{a + \ell}(\tau) \right) =  - \mathbf{1}_{\lbrace \textsf{deg}_1(\mcI_a\tau) \le 0 \rbrace} \mcM^+\big(\mcI^{+,0}_{a}\otimes\Id\big)\hat{\Delta}_{0} (\tau).
\end{equs}
In particular the above definition tells us how $\hat\Gamma_0$ maps $\mcI_a^{+,0}(\tau)$, via the change of base
\begin{equation} \label{change_bais}
\sum_{\ell} \frac{X^{\ell}}{\ell !} \mathcal{I}^{+,0}_{a + \ell}(\tau) = \tilde{\mathcal{I}}^{+,0}_{a}(\tau).
\end{equation}
Finally, we extend $\hat\Gamma_0$ multiplicatively to all of $\mathcal{T}_0$. This multiplicative property of $\hat\Gamma_0$ is crucial because it endows the right hand side of \eqref{eq:delhatdel} with multiplicativity.

Now the result can be achieved by induction on the size of the trees. We have already checked the base case on the trees $\{\mathbf{1},\Xi_0,\Xi_1,X_i\}$ and due to the multiplicativity discussed above, we need only to check the equality on $\mcI_a(\tau)$ for $\tau\in \mathcal{T}_0$. To that end we compute
\begin{equs}
\left( \Id \otimes \hat{\Gamma}_0 \right) \Delta_0 \CI_a(\tau) & =  
 (\mcI_a\otimes\Id)(\Id\otimes \hat{\Gamma}_0 )\Delta_{0}  \tau   \\ & + \left( \id \otimes \hat{\Gamma}_0 \right)  \sum_{\vert\ell +m\vert < \textsf{deg}_0(\mcI_a\tau)} \frac{X^\ell}{\ell!}\otimes \frac{X^m}{m!}\mcI^{+,0}_{a+\ell+m}(\tau).
\end{equs}
We can conclude by applying the induction hypothesis on the first term on the right hand side and the definition of $ \hat{\Gamma}_0 $ on the second term.
\end{proof}

\begin{remark}
The algebraic property given by Proposition~\ref{lem:delhatdel} is quite remarkable as it says that $ \hat{\Delta}_0 $ is of the same nature as $ \Delta_0 $ which is a variant of the deformed Butcher-Connes-Kreimer coproduct. This shows how powerful this formalism is for encoding various analytical operations. This property has been overlooked in \cite{BR18} as one can write a similar formula for $ \Delta^{M^{\circ}} $. Indeed, one has 
\begin{equs}
\Delta^{\!M^{\circ}} (\mcI_a\tau)   &= (\mcI_a\otimes\Id) \Delta^{\!M}  \tau - \hspace{-3mm}\sum_{\textsf{deg}_0(\mcI_a\tau) \leq \vert\ell\vert} \frac{X^\ell}{\ell!}\otimes\mcM^+\big(\mcI^{+}_{a+\ell}\otimes\Id\big)\Delta^{\!M}  \tau,
\\ 
\Delta^{\!M}  \tau & = \Delta^{\!M^{\circ}} R \tau.
\end{equs}
From this we can write:
\begin{equs} \label{new_formula_delta_M}
\Delta^{\!M^{\circ}} = ( M^{\circ} \otimes\hat\Gamma_M) \Delta_0, \quad    
\end{equs}
where 
\begin{equs}
&\hat{\Gamma}_M \left( \sum_{\ell} \frac{X^{\ell}}{\ell !} \mathcal{I}^{+,0}_{a + \ell}(\tau) \right) = -\mathbf{1}_{\lbrace \textsf{deg}_0(\mcI_a\tau) \le 0 \rbrace} \mcM^+\big(\mcI^{+,0}_{a}\otimes\Id\big)\Delta^{\!M} (\tau).
\end{equs}
Indeed, one has:
\begin{equs}
( M^{\circ} \otimes\hat\Gamma_M) \Delta_0 \mathcal{I}_{a}(\tau) &  =  (\mcI_a\otimes\textnormal{Id})(  M^{\circ} R \otimes \hat{\Gamma}_M )\Delta_{0}  \tau   \\ & + \left( \id \otimes \hat{\Gamma}_M \right)  \sum_{\vert\ell +m\vert < \textsf{deg}_0(\mcI_a\tau)} \frac{X^\ell}{\ell!}\otimes \frac{X^m}{m!}\mcI^{+,0}_{a+\ell+m}(\tau).
\end{equs}
We conclude by using the induction hypothesis and the fact that $ R $ commutes with $ \Delta_0 $:
\begin{equs}
(  M^{\circ} R \otimes \hat{\Gamma}_M )\Delta_{0}  = (  M^{\circ}  \otimes \hat{\Gamma}_M )\Delta_{0} R = \Delta^{\! M^{\circ}} R. 
\end{equs}
\end{remark}
Given the commutativity that $ R $ enjoys with $\Delta_0$ and the result we have connecting $ \hat{\Delta}_0 $ to $\Delta_0$, one might suspect that the commutativity of $R$ extends to $\hat\Delta_0$. The following lemma, which is needed in the proof of Proposition~\ref{prop:magicformula}, proves this suspicion to be correct.
\begin{lemma}\label{lem:req2} One has 
\begin{equs} \label{commutation_Delta1_R} 
\left(R \otimes \id \right) \hat{\Delta}_0 = \hat{\Delta}_0 R.
\end{equs}
\end{lemma}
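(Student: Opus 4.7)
The plan is to exploit the factorisation just established in Proposition~\ref{lem:delhatdel}, namely $\hat{\Delta}_0 = (\Id \otimes \hat{\Gamma}_0)\Delta_0$, together with the defining commutation property \eqref{EqCommutationRDelta} of the preparation map, which at level $i=0$ reads $(R \otimes \Id)\Delta_0 = \Delta_0 R$. Since $R$ acts on the first tensor slot and $\hat{\Gamma}_0$ on the second, the two operators $R \otimes \Id$ and $\Id \otimes \hat{\Gamma}_0$ commute as endomorphisms of $\mathcal{T} \otimes \mathcal{T}^{+}$. Combining these three ingredients immediately gives
\begin{equs}
\hat{\Delta}_0 R &= (\Id \otimes \hat{\Gamma}_0)\Delta_0 R
= (\Id \otimes \hat{\Gamma}_0)(R \otimes \Id)\Delta_0 \\
&= (R \otimes \Id)(\Id \otimes \hat{\Gamma}_0)\Delta_0
= (R \otimes \Id)\hat{\Delta}_0,
\end{equs}
which is exactly the desired identity.

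An alternative, more pedestrian, approach would be a direct induction on the size of the decorated tree following the recursive definition \eqref{EqDefnDeltaHatCirc} of $\hat{\Delta}_0$. The base cases on $\{\mathbf{1},X_j,\Xi_0,\Xi_1\}$ are trivial because $R$ fixes polynomials and noises and $\hat{\Delta}_0$ reduces to $\cdot\otimes\mathbf{1}$. For a planted tree $\mcI_a(\tau)$, one again uses that $R$ leaves planted trees invariant, which reduces the problem to showing that both sides of \eqref{commutation_Delta1_R} agree on $\mcI_a(\tau)$; this in turn follows from the inductive hypothesis applied to $\tau$ together with \eqref{EqCommutationRDelta}. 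Extension to arbitrary trees is then by multiplicativity of both $\hat{\Delta}_0$ and the tensor product structure. However, this route is strictly longer and merely re-derives what Proposition~\ref{lem:delhatdel} encapsulates.

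No step here is a genuine obstacle: all the hard algebraic work has already been absorbed into Proposition~\ref{lem:delhatdel}, which itself hinges on the multiplicative extension of $\hat{\Gamma}_0$ and the change of basis \eqref{change_bais}. The only point requiring mild care is checking that the intermediate expression $(R \otimes \hat{\Gamma}_0)\Delta_0$ is well-defined, i.e.\ that $\hat{\Gamma}_0$ really lands in $\mathcal{T}^{+}$ and $R$ preserves $\mathcal{T}$; both are immediate from the definitions. I therefore expect the clean factorisation proof above to be the one adopted.
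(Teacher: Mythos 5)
Your factorisation argument is exactly the paper's proof: both use Proposition~\ref{lem:delhatdel} to write $\hat{\Delta}_0 = (\Id\otimes\hat{\Gamma}_0)\Delta_0$, commute $R\otimes\Id$ past $\Id\otimes\hat{\Gamma}_0$, and then invoke \eqref{EqCommutationRDelta}. The proposal is correct and takes essentially the same route, merely writing the chain of equalities in the opposite order.
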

\begin{proof} This is an easy consequence of \eqref{eq:delhatdel} and \eqref{EqCommutationRDelta}. Indeed one has
\begin{equation*}\begin{split}
\left(R\otimes \Id\right)\hat\Delta_0 &= \left(R\otimes \Id\right)\left( \id \otimes \hat{\Gamma}_0 \right) \Delta_0 \\
&= \left( \id \otimes \hat{\Gamma}_0 \right)\left(R\otimes \Id\right) \Delta_0 \\
&= \left( \id \otimes \hat{\Gamma}_0 \right) \Delta_0 R \\
&= \hat{\Delta}_0 R.
\end{split}
\end{equation*}
\end{proof}
\begin{remark}
The commutation property between $ R $ and $ \Delta_0 $ provides a new formula for $ \Delta^{\!M} $ coming from \eqref{new_formula_delta_M}
\begin{equs}
\Delta^{\! M} = \Delta^{\! M^{\circ}} R = \left( M^{\circ} \otimes \hat{\Gamma}_M \right) \Delta_0 R = \left( M^{\circ} R \otimes \hat{\Gamma}_M \right) \Delta_0  = \left( M \otimes \hat{\Gamma}_M \right) \Delta_0. 
\end{equs}
\end{remark}
We are now in position to be able to prove the main result of this section.
\begin{proof}[of Proposition~\ref{prop:magicformula}]
For clarity of exposition, let us explain how this proof differs from other of its sort in this article. As before we follow the programme of establishing the result on the base symbols, followed by the multiplicity of the results, from where the result comes inductively once it has been established for planted trees. What is novel in this particular proof is that due to the way $\Pi^{R,i}_x$ and $\hat\Pi^{R,i}_x$ are intertwined, one has to concurrently establish \eqref{magicformula}, and:
\begin{equs}
\hat\Pi_x^{R,1} = \left( \hat\Pi_x^{R,0}\otimes f_x^{R,0} \right) \hat{\Delta}_0,
\end{equs}
using one to prove the other. The base case is settled by fixing first $\tau\in\{\mathbf{1},X_j,\Xi_i\}$ for $i\in\{0,1\},j\in\{0,1,\hdots,d\}$ and noticing that:
\begin{equation*}\begin{split}
\left( \hat\Pi_x^{R,0}\otimes f_x^{R,0} \right) \hat{\Delta}_0\tau &= \left( \hat\Pi_x^{R,0}\otimes f_x^{R,0} \right)(\tau\otimes\mathbf{1})\\
 &= \hspace{2mm}\hat\Pi_x^{R,0}(\tau)\otimes f_x^{R,0}(\mathbf{1})\\
 &= \hspace{2mm}\hat\Pi_x^{R,0}(\tau) \\
 &= \hspace{2mm}\hat\Pi_x^{R,1}(\tau),
\end{split}
\end{equation*}
where in the second equality we used $f_x^{R,0}(\mathbf{1})=1$, in the third we have identified the tensor product with a scalar, with multiplication, and finally that $\hat\Pi^{R,i}$ agree on this choice of $\tau$. Futhermore, as $R$ is invariant on this choice of $\tau$, we also have
\begin{equs}\left(\Pi^{R,0}_{x}\otimes f_x^{R,0}\right)\hat\Delta_0\tau = {\Pi}^{R,1}_{x}(\tau).
\end{equs}
For multiplicativity we take $\tau,\bar\tau\in\{\one,X_j,\Xi_i\}$ with $i\in\{0,1\}$ and $j\in\{0,1,...,d\}$ and check that
\begin{equs}
\hat{\Pi}^{R,1}_{x}(\tau\bar\tau) =\hat{\Pi}^{R,1}_{x}(\tau) \hat{\Pi}^{R,1}_{x}(\bar\tau) &= \left(\hat\Pi^{R,0}_{x}\otimes f_x^{R,0}\right)\hat\Delta_0\tau\left(\hat\Pi^{R,0}_{x}\otimes f_x^{R,0}\right)\hat\Delta_0\bar\tau \\
&=\left(\hat\Pi^{R,0}_{x}\otimes f_x^{R,0}\right)\hat\Delta_0\left(\tau\bar\tau\right),
\end{equs}
where the last inequality is due to the fact that all the maps considered are multiplicative. It only remains to contend with planted trees $\mathcal{I}_{a}(\tau)$.
 \begin{equation*}
 \begin{aligned}
&  \left( \hat{\Pi}_x^{R,0} \otimes f_x^{R,0} \right) \hat{\Delta}_{0} \mathcal{I}_a(\tau)  = \left( \hat{\Pi}_x^{R,0} \otimes f_x^{R,0} \right)(\mcI_a\otimes\Id)\hat{\Delta}_{0} \tau \\ &  -  \sum_{ \textsf{deg}_1(\mcI_a\tau) \leq \vert\ell\vert}\left( \hat{\Pi}_x^{R,0} \otimes f_x^{R,0} \right) \left( \frac{X^\ell}{\ell!}\otimes\mcM^+\big(\mcI^{+,0}_{a+\ell}\otimes\Id\big)\hat{\Delta}_{0} \tau \right).
 \end{aligned}
 \end{equation*}
\noindent The second term on the right hand side above, contributes:
 \begin{equs}
 - \sum_{ |\ell| \geq \textsf{deg}_1(\mcI_a\tau) }\frac{(\cdot-x)^\ell}{\ell!}   \left( f_x^{R,0} \CI^{+,0}_{a+\ell}  \otimes f_x^{R,0} \right) \hat{\Delta}_0 \tau.
 \end{equs}
Whereas the first term on the right hand side, owing to the upper triangular structure of $\hat{\Delta}_0$, gives:
 \begin{equs}
 & \left( \hat{\Pi}_x^{R,0} \otimes f_x^{R,0} \right)(\mcI_a\otimes\Id)\hat{\Delta}_{0} \tau = D^a K *  \left(\Pi_x^{R,0} \otimes f_x^{R,0}  \right) \hat{\Delta}_0 \tau 
 \\ &  + \sum_{ |\ell| < \textsf{deg}_0(\mcI_a\tau) }\frac{(\cdot-x)^\ell}{\ell!}   \left( f_x^{R,0} \CI^{+,0}_{a+\ell}  \otimes f_x^{R,0} \right) \hat{\Delta}_0 \tau.
 \end{equs}	
 By the induction hypothesis, one has:
 \begin{equs}
  D^a K *  \left(\Pi_x^{R,0} \otimes f_x^{R,0}  \right) \hat{\Delta}_0 \tau  = D^a K * \Pi_x^{R,1} \tau.
 \end{equs}
Also using the induction hypothesis in conjunction with the upper triangular structure of the map $\hat{\Delta}_0$, we see that:
 \begin{equs}
 & \sum_{ |\ell| \geq \textsf{deg}_1(\mcI_a\tau) }\frac{(\cdot-x)^\ell}{\ell!}   \left( f_x^{R,0} \CI^{+,0}_{a+\ell}  \otimes f_x^{R,0} \right) \hat{\Delta}_0 \tau \\ & = -  \sum_{ |\ell| \geq \textsf{deg}_1(\mcI_a\tau) }\frac{(\cdot-x)^\ell}{\ell!}   
 \left( \left(D^{a + \ell} K * \Pi_x^{R,0}\tau\right) (x) \right),
 \end{equs}
and similarly for the summation over $|\ell| < \textsf{deg}_0(\mcI_a\tau)$. By construction we know that $\textsf{deg}_0(\sigma) \ge \textsf{deg}_1(\sigma)$ which implies $\sum_{|\ell| <\textsf{deg}_0(\sigma)} (\bullet) - \sum_{|\ell| \geq\textsf{deg}_1(\sigma)} (\bullet) = \sum_{|\ell| <\textsf{deg}_1(\sigma)} (\bullet) $ for any $\sigma\in\mathcal{T}$. Putting all of these observations together, we get:
\begin{equation*}\begin{split}
\left( \hat{\Pi}_x^{R,0} \otimes f_x^{R,0} \right) \hat{\Delta}_{0} \mathcal{I}_a(\tau) &= \big(D^{a} K * { \Pi}_x^{R,1} \tau\big) \\ & - \sum_{|k| < \textsf{deg}_1(\mathcal{I}_a\tau) }\frac{(\cdot-x)^k}{k!}  \left(\left(D^{a +k}  K * { \Pi}_x^{R,1} \tau\right)(x)\right) \\
&= \hat{\Pi}_x^{R,1}(\mathcal{I}_a(\tau)).
\end{split}
\end{equation*}
To complete the induction, we argue on the strength of Lemma~\ref{lem:req2} as follows:
\begin{equs}\begin{split}
\Pi_x^{R,1}=\hat{\Pi}_x^{R,1} R &= \left( \hat{\Pi}_x^{R,0} \otimes f_x^{R,0} \right) \hat{\Delta}_{0}R
\\ &=\left( \hat{\Pi}_x^{R,0} \otimes f_x^{R,0} \right)(R \otimes \text{Id}) \hat{\Delta}_{0}
\\ &=\left( \hat{\Pi}_x^{R,0}R \otimes f_x^{R,0} \right)\hat{\Delta}_{0}
\\ &=\left( {\Pi}_x^{R,0} \otimes f_x^{R,0} \right)\hat{\Delta}_{0}. 
\end{split}
\end{equs}
\end{proof}
\begin{remark} Such factorisation was initiated in \cite{reg} for finding a map $ \Delta^M $ given a renormalisation map $ M $ such that:
\begin{equs}
\Pi_x^M = \left( \Pi_x\otimes f_x \right) \Delta^{\!M}.
\end{equs}
This map exists for any linear map $ M : \mathcal{T}_0 \rightarrow \mathcal{T}_0 $ that can be understood as choosing a renormalisation map. The difficulty was to show that given a map $ M $, the map $ \Delta^{\!M} $ is upper triangular. This property guarantees that we get the bounds for a model in the end.
As one has to construct this map by hand, \cite{BHZ} proposes a different route by introducing extended decorations that allow one to write directly a formula of the type:
\begin{equs}
\Pi_x^M = \Pi_x M,
\end{equs}
which is not true in general with the normal setting. It was in \cite{BR18} where a recursive formula is given for $ \Delta^{\! M} $ when $M$ is constructed via a preparation map in the following way:
\begin{equs} \label{recursive_M}
M = M^{\circ} R, \quad M^{\circ} \tau \bar{\tau} = M^{\circ} \tau M^{\circ} \bar{\tau}, \quad M^{\circ} \mathcal{I}_{a}(\tau) 
= \mathcal{I}_a(M \tau).
\end{equs}
One defines $ \Delta^{\!M} $ as in \cite{BR18}:
\begin{equs} \label{EqDefnDeltaMCirc_M} 
&\Delta^{\!M^{\circ}} (\bullet) \coloneqq \bullet\otimes \mathbf{1}, \quad \textrm{ for }\bullet\in\big\{\mathbf{1}, X_i, \Xi_i\big\},   \\
&\Delta^{\!M^{\circ}} (\mcI_a\tau) \coloneqq (\mcI_a\otimes\Id)\Delta^{\!M} (\tau)\hspace{3mm} - \sum_{\textsf{deg}_0(\mcI_a\tau) \leq \vert\ell\vert} \frac{X^\ell}{\ell!}\otimes\mcM^+\big(\mcI^{+,0}_{a+\ell}\otimes\Id\big)\Delta^{M} (\tau),
\\ &\Delta^{\!M} \coloneqq \Delta^{\!M^{\circ}} R, \quad 
\Delta^{\!M^{\circ}} \tau \bar \tau = \left( \Delta^{\!M^{\circ}} \tau \right)\left( \Delta^{\!M^{\circ}} \bar \tau \right).
\end{equs}

Let us mention that the recursive formula \eqref{recursive_M} of $ M $  does not make sense when $ R $ depends on a space-time point. One has to use instead the notation $ \Delta^{\! R} $ (resp. $ \hat{\Delta}^{R} $) instead of $ \Delta^{\! M} $ (resp. $ \Delta^{M^{\circ}} $). The rest of the definition remains the same. The recursive formulation is a consequence of shortening Taylor expansions in the context of renormalisation. In principle, renormalisation does not commute with recentring as $ R $ applied to $ \tau  $ produces a sum of trees with a higher degree. Therefore, it is not surprising that if such a map can be used in the context of Malliavin derivatives, it should increase the degree of the noise.
\end{remark}
One sees from the definition that like $\Delta_i$, the map $ \hat{\Delta}_0 $ is upper triangular in the sense that: 
\begin{equs}\label{eq:hatdelta0tri}
\hat{\Delta}_0 \tau - \tau \otimes \mathbf{1} = \sum_{\widehat{(\tau)}} \hat\tau^{(1)} \otimes \hat\tau^{(2)}, \quad \textsf{deg}_1(\hat\tau^{(1)}) \geq \textsf{deg}_1(\tau),
\end{equs}
where the sweedler notation above, is chosen so to avoid confusion with \eqref{eq:sweedlernotation}. Contrasting this triangular structure with \eqref{EqDefnDeltaHatCirc} suggests that the effect of the co-action on $\tau\in\mathcal{T}_1$, is such that $\Xi_1$ always ends up in $\tau^{(2)}$ except of course for $\tau\otimes\mathbf{1}$. This motivates the following result with which we conclude this section:
\begin{proposition}
One has for every $ \tau \in \mathcal{T}_0 $:
\begin{equs} \label{id_T_0}
\hat{\Delta}_0 \tau = \tau \otimes \mathbf{1}.
\end{equs}
\end{proposition}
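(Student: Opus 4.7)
The plan is to proceed by structural induction on $\tau \in T_0$, using the recursive definition of $\hat{\Delta}_0$ together with the key observation that $\textsf{deg}_0$ and $\textsf{deg}_1$ agree on the subspace $\mathcal{T}_0$. Indeed, since the two degree maps differ only in their value on $\Xi_1$, and elements of $\mathcal{T}_0$ contain no occurrence of $\Xi_1$, one has $\textsf{deg}_0(\sigma) = \textsf{deg}_1(\sigma)$ for every $\sigma \in \mathcal{T}_0$. This equality of degrees is what will collapse the correction term in \eqref{EqDefnDeltaHatCirc} to zero.

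For the base cases $\tau \in \{\mathbf{1}, X_j, \Xi_0\}$, the identity $\hat{\Delta}_0 \tau = \tau \otimes \mathbf{1}$ holds by definition. Multiplicativity of $\hat{\Delta}_0$ then handles tree products: if $\hat{\Delta}_0 \tau = \tau \otimes \mathbf{1}$ and $\hat{\Delta}_0 \tau' = \tau' \otimes \mathbf{1}$, then
\begin{equs}
\hat{\Delta}_0(\tau \tau') = \hat{\Delta}_0(\tau)\hat{\Delta}_0(\tau') = (\tau \otimes \mathbf{1})(\tau' \otimes \mathbf{1}) = \tau\tau' \otimes \mathbf{1}.
\end{equs}
Hence it suffices to establish the claim on planted trees $\mcI_a(\sigma)$ with $\sigma \in \mathcal{T}_0$.

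For such a planted tree, the inductive hypothesis gives $\hat{\Delta}_0 \sigma = \sigma \otimes \mathbf{1}$, so
\begin{equs}
(\mcI_a \otimes \Id)\hat{\Delta}_0 \sigma = \mcI_a(\sigma) \otimes \mathbf{1}.
\end{equs}
It remains to show that the sum
\begin{equs}
\sum_{\textsf{deg}_1(\mcI_a\sigma) \leq |\ell|} \frac{X^\ell}{\ell!} \otimes \mcM^+\bigl(\mcI^{+,0}_{a+\ell} \otimes \Id\bigr)\hat{\Delta}_0(\sigma)
\end{equs}
vanishes. Using the inductive hypothesis, the right tensor factor reduces to $\mcI^{+,0}_{a+\ell}(\sigma)$, which by the convention introduced in Section~\ref{Section::2} is zero as soon as $\textsf{deg}_0(\mcI_{a+\ell}(\sigma)) \leq 0$, i.e.\ as soon as $|\ell| \geq \textsf{deg}_0(\mcI_a \sigma)$. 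Thus a non-vanishing contribution would require
\begin{equs}
\textsf{deg}_1(\mcI_a \sigma) \leq |\ell| < \textsf{deg}_0(\mcI_a \sigma).
\end{equs}
But since $\sigma \in \mathcal{T}_0$ contains no $\Xi_1$, the same holds for $\mcI_a(\sigma)$, and hence $\textsf{deg}_0(\mcI_a \sigma) = \textsf{deg}_1(\mcI_a \sigma)$; the range of admissible $\ell$ is therefore empty, and the correction term vanishes. This completes the induction. The argument is essentially a routine unfolding of the definitions; the only conceptual point is the degree-coincidence on $\mathcal{T}_0$, which is precisely the feature that distinguishes $\hat{\Delta}_0$ from $\Delta_0$ and makes it act trivially on the unperturbed sector.
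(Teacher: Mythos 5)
Your proof is correct and follows essentially the same route as the paper: induction over the tree structure via the base symbols, multiplicativity, and the planted-tree case, where the correction sum is killed by the coincidence $\textsf{deg}_0 = \textsf{deg}_1$ on $\mathcal{T}_0$ combined with the convention that $\mcI^{+,0}_{a+\ell}(\sigma)$ vanishes at non-positive degree. No gaps to report.
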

\begin{proof}
One proceeds by induction. The base case is trivial. Furthermore, as a consequence of the multiplicativity of $\hat\Delta_0$, one only needs to consider $ \mathcal{I}_a(\tau) \in \mathcal{T}_0 $. Then assuming $\eqref{id_T_0}$ true for $\tau$:
\begin{equs}
\hat{\Delta}_{0} (\mcI_a\tau) & = (\mcI_a\otimes\Id)\hat{\Delta}_{0} (\tau)  - \sum_{\textsf{deg}_1(\mcI_a\tau) \leq \vert\ell\vert} \frac{X^\ell}{\ell!}\otimes\mcM^+\big(\mcI^{+,0}_{a+\ell}\otimes\Id\big)\hat{\Delta}_{0} (\tau)
\\ & = \mcI_a(\tau)\otimes \mathbf{1}  
- \sum_{\textsf{deg}_1(\mcI_a\tau) \leq \vert\ell\vert} \frac{X^\ell}{\ell!}\otimes\mcM^+\big(\mcI^{+,0}_{a+\ell}(\tau)\otimes  \mathbf{1} \big)
\\ & = \mcI_a(\tau)\otimes \mathbf{1} - \sum_{\textsf{deg}_1(\mcI_a\tau) \leq \vert\ell\vert} \frac{X^\ell}{\ell!}\otimes \mcI^{+,0}_{a+\ell}(\tau)
\\ & =  \mcI_a(\tau)\otimes \mathbf{1},
\end{equs}
where we have used the induction hypothesis and the last line comes from the fact that $ \textsf{deg}_1(\mcI_a\tau) = \textsf{deg}_0(\mcI_a\tau) $ because $ \tau \in \mathcal{T}_0 $.
\end{proof}	
\section{Algebraic identities}\label{sec:AlgIde}
The purpose of this section is to employ the constructions in the previous section to relate the combinatorial approach of our decorated trees to that of the fundamentally analytical multi-index approach of \cite{LOTP}. On the stochastic estimates end, this analyticity is manifest most prominently in their use of the spectral gap inequality to reduce the control of $p$-th moment of $\Pi_x^{-}$, to the control of the $p$-th moment of its Malliavin derivative and absolute control of its first moment. We note that the control of the latter, in either of the approaches, is due to BPHZ renormalisation and it is the former term that is the actual novelty here. To define the derivative, the authors in \cite{LOTP} use the standard definition in the Malliavin Calculus literature for cylindrical functionals of the noise
$F[\xi]\coloneqq\bar{F}((\xi,\eta_1),\hdots,(\xi,\eta_N))$, i.e. the Malliavin derivative is the Fr\'{e}chet Derivative in the noise:
\begin{equs}\label{eq:malderiv}\frac{\partial F}{\partial\xi}[\xi]=\sum_{n=1}^N\partial_n\bar{F}((\xi,\eta_1),\hdots,(\xi,\eta_N))\eta_n.
\end{equs}
Pursuant to our goal of reconciling the two approaches, we first contend with the question of how to relate the analytical object $\delta\Pi^{R,1}_{x}$ to our decorated tree based approach. For this we may think of the Fr\'{e}chet derivative \eqref{eq:malderiv} as a directional derivative in some arbitrary infinitesimal perturbation $\delta\xi$, which owing to the linearity of $\Pi^{R,1}_{x}$ is equivalent to exchanging some instance of $\xi$ with that of $\delta\xi$. That is what $D_\Xi$ does on the level of trees and hence we have the following result.

\begin{proposition}
\label{malliavin_derivative_model1}
One has on $\mathcal{T}_0 $
\begin{equs}\label{eq:malliavin_derivative_model1}
\delta\Pi_x^{R,1} = \Pi_x^{R,1} D_{\Xi}.
\end{equs}
\end{proposition}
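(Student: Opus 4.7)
The plan is to prove the identity by induction on the tree structure, first at the level of $\hat{\Pi}_x^{R,1}$ (the non-renormalised version) and then upgrade to $\Pi_x^{R,1} = \hat{\Pi}_x^{R,1} R$ via \eqref{commute_D_Xi}. The base case on $\{\mathbf{1}, X^k, \Xi_0\}$ is immediate from the definitions: for polynomials both sides vanish since $D_{\Xi} X^k = \mathbf{0}$ and $\delta$ annihilates deterministic objects, while for the noise one has $\delta \Pi_x^{R,1}(\Xi_0) = \delta \xi = \Pi_x^{R,1}(\Xi_1) = \Pi_x^{R,1} D_{\Xi} \Xi_0$. Multiplicativity is automatic because $\delta$ is a derivation on random variables, $D_{\Xi}$ is a derivation on trees (by Definition~\ref{malliavin_derivative}), and $\hat{\Pi}_x^{R,1}$ is multiplicative by \eqref{recursive_model}; so if the identity holds for $\tau, \bar\tau$ it passes to $\tau\bar\tau$ by the Leibniz rule on both sides.

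The inductive step reduces to the case of a planted tree $\mathcal{I}_a(\sigma)$ with $\sigma \in \mathcal{T}_0$. Applying $\delta$ to the recursive definition \eqref{recursive_model} of $\hat{\Pi}_x^{R,1}(\mathcal{I}_a \sigma)(y)$ and commuting $\delta$ past the convolutions, Taylor polynomials and evaluations (all of which are deterministic operations) yields
\begin{equs}
\delta \hat{\Pi}_x^{R,1}(\mathcal{I}_a\sigma)(y) &= \bigl(D^a K * \delta\Pi_x^{R,1}\sigma\bigr)(y) \\
&\quad - \sum_{|k| < \textsf{deg}_1(\mathcal{I}_a\sigma)} \frac{(y-x)^k}{k!}\bigl(D^{a+k} K * \delta\Pi_x^{R,1}\sigma\bigr)(x).
\end{equs}
On the other side, using $D_{\Xi}\mathcal{I}_a(\sigma) = \mathcal{I}_a(D_{\Xi}\sigma)$ together with the fact that $R$ fixes planted trees, one computes $\Pi_x^{R,1} D_{\Xi}\mathcal{I}_a(\sigma) = \hat\Pi_x^{R,1}\mathcal{I}_a(D_{\Xi}\sigma)$ and applies \eqref{recursive_model} again. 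Substituting the induction hypothesis $\Pi_x^{R,1} D_{\Xi} \sigma = \delta \Pi_x^{R,1} \sigma$ into the resulting expression gives exactly the display above, \emph{provided} the two Taylor expansions are truncated at the same order, i.e. $\textsf{deg}_1(\mathcal{I}_a D_{\Xi}\sigma) = \textsf{deg}_1(\mathcal{I}_a \sigma)$.

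This last point is the crux of the argument and is precisely the reason for setting $\textsf{deg}_1(\Xi_0) = \textsf{deg}_1(\Xi_1) = \alpha$ in Section~\ref{Section::2}: since $D_{\Xi}$ acts by swapping one $\Xi_0$ for $\Xi_1$ (and otherwise preserves the tree shape and decorations), every summand of $D_{\Xi}\sigma$ has the same $\textsf{deg}_1$ as $\sigma$, so the two truncation ranges coincide. This is also why the statement cannot hold with $\textsf{deg}_0$ in place of $\textsf{deg}_1$ and why the model $\Pi_x^{R,1}$, not $\Pi_x^{R,0}$, is the natural object to differentiate. Finally, one promotes the identity from $\hat{\Pi}_x^{R,1}$ to $\Pi_x^{R,1}$: writing $\Pi_x^{R,1} = \hat\Pi_x^{R,1} R$ and using the commutation \eqref{commute_D_Xi} between $R$ and $D_{\Xi}$,
\begin{equs}
\delta\Pi_x^{R,1} = \delta\hat{\Pi}_x^{R,1} R = \hat{\Pi}_x^{R,1} D_{\Xi} R = \hat{\Pi}_x^{R,1} R\, D_{\Xi} = \Pi_x^{R,1} D_{\Xi},
\end{equs}
which completes the proof on $\mathcal{T}_0$.
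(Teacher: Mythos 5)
Your proposal is correct and follows essentially the same route as the paper: induction on the tree structure at the level of $\hat{\Pi}_x^{R,1}$ (base symbols, multiplicativity via the two Leibniz rules, then planted trees by commuting $\delta$ with the convolution and Taylor truncation), followed by the upgrade to $\Pi_x^{R,1}=\hat{\Pi}_x^{R,1}R$ using \eqref{commute_D_Xi}. Your explicit remark that the truncation orders match because $\textsf{deg}_1(\Xi_0)=\textsf{deg}_1(\Xi_1)$ is a point the paper leaves implicit in the planted-tree step (and only addresses afterwards when explaining why the identity fails for $\Pi_x^{R,0}$), so it is a welcome clarification rather than a deviation.
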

\begin{proof} One can proceed by induction on the size of the decorated trees using the recursive formula of the model. On the basic symbols $\{\mathbf{1},\Xi_0,X_i\}$ the property is seen to be true. Suppose then we have two trees $\tau,\,\bar\tau$ that satisfy the induction hypothesis; we check that
\begin{equs}
\delta \left(  \hat{\Pi}^{R,1} (\tau \bar{\tau}) \right) &= (\delta \hat{\Pi}^{R,1} \tau)( \hat{\Pi}^{R,1}\bar{\tau}) + (\hat{\Pi}^{R,1} \tau)( \delta\hat{\Pi}^{R,1}\bar{\tau}) \\
&=  (\hat{\Pi}^{R,1}D_\Xi\tau)(\hat{\Pi}^{R,1}\bar{\tau}) + (\hat{\Pi}^{R,1} \tau)(\hat{\Pi}^{R,1}D_\Xi\bar{\tau})\\
&=\hat\Pi^{R,1}\left(D_\Xi\tau\,\bar\tau + \tau\,D_\Xi\bar\tau\right)\\
&=\hat\Pi^{R,1}(D_\Xi(\tau\bar\tau)),
\end{equs}
where we have used the product rule for the Malliavin derivative, linearity, and multiplicativity of $\hat\Pi^{R,1}$, and the fact that $D_\Xi$ is a derivation. With multiplicativity settled we need only settle the case of planted trees $\bar\tau=\mathcal{I}_a(\tau)$. To that end, we can argue
\begin{equs}
\delta\hat\Pi_x^{R,1} (\mathcal{I}_a\tau) &= \delta\big(D^{a} K * { \Pi}_x^{R,1}\tau\big) \\ 
&\hphantom{aaaa}- \delta\left(\sum_{|k| < \textsf{deg}_1(\mathcal{I}_a\tau) }\frac{(y-x)^k}{k!}  \big(D^{a +k}  K * { \Pi}_x^{R,1} \tau\big)(x)\right)\\
&= (D^{a} K * \delta{ \Pi}_x^{R,1}\tau\big) \\
&\hphantom{aaaa}- \sum_{|k| < \textsf{deg}_1(\mathcal{I}_a\tau) }\frac{(y-x)^k}{k!}  \big(D^{a +k}  K * \delta{ \Pi}_x^{R,1} \tau\big)(x)\\
&= \hat\Pi_x^{R,1} \mathcal{I}_a D_\Xi\tau \\
&= \hat\Pi_x^{R,1} D_\Xi\mathcal{I}_a\tau,
\end{equs}
where we have used the induction hypothesis, the fact that Frechet derivatives are distributive over convolutions, and the definition of $D_\Xi$. To complete the induction we argue:
\begin{equs}\label{eq:proof10}
\delta\left(\Pi^{R,1}_{x}\tau\right) &= \delta\left(\hat\Pi^{R,1}_{x}\left(R\tau\right)\right) 
= \hat\Pi^{R,1}_{x}D_\Xi \left(R\tau\right) \\
&= \hat\Pi^{R,1}_{x}R \left(D_\Xi\tau\right) = \Pi^{R,1}_{x} \left(D_\Xi\tau\right).
\end{equs}
\end{proof}
One can remark that the commutativity property is not true when one considers the model $ \Pi_{x}^{R,0} $ that takes into account the specific degree of the perturbation $ \delta \xi $. This breakdown is attributable to the fact that one has to change the length of the Taylor expansions after applying $ \delta $.

The next object we will contend with is $\pxyo$. We recall from \cite[Eq. 2.64]{LOTP} that
\begin{equs}
\pxyo=\Pi_x(y),
\end{equs}
which corresponds to applying the model to certain planted trees. The following proposition then reveals that on $\CI(T)$  for the equation that the authors contend with in \cite{LOTP}, $\gi{0}{xy}$ is essentially the same as $\pxyo$.
\begin{proposition}\label{prop:gammapiequiv}
For $\tau$ such that $\CI(\tau)$ is of positive degree, one has
\begin{equs} \label{gamma_Pi}
\gi{0}{xy}\left(\tilde{\CI}^{+,0}(\tau)\right) = \left(\Pi^{R,0}_y\CI(\tau)\right)(x),
\end{equs}
where we recall the change of basis given in $\eqref{change_bais}$:
\begin{equation*} 
	\sum_{\ell} \frac{X^{\ell}}{\ell !} \mathcal{I}^{+,0}_{a + \ell}(\tau) = \tilde{\mathcal{I}}^{+,0}_{a}(\tau).
\end{equation*}
\end{proposition}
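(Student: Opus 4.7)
The plan is to reduce the statement to the model compatibility $\Pi^{R,0}_y = \Pi^{R,0}_x\Gamma^{R,0}_{yx}$ and then pick off the value at the base point, using the representation $\Gamma^{R,0} = (\Id\otimes\gamma^{R,0})\Delta_0$ from \eqref{eq:gammacoaction}.

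First I would expand $\Delta_0\CI(\tau)$ with the last recursive line of \eqref{coaction}:
\begin{equs}
\Delta_0\CI(\tau) = (\CI\otimes\Id)\Delta_0\tau + \sum_{|\ell+m|<\textsf{deg}_0(\CI(\tau))}\frac{X^\ell}{\ell!}\otimes\frac{X^m}{m!}\,\CI^{+,0}_{\ell+m}(\tau).
\end{equs}
Pairing the second slot with the character $\gamma^{R,0}$ yields $\Gamma^{R,0}\CI(\tau)$, with the polynomial remainder contributing $\sum \frac{X^\ell}{\ell!}\frac{(x-y)^m}{m!}\gamma^{R,0}(\CI^{+,0}_{\ell+m}(\tau))$. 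Here $\gamma^{R,0}(X^m) = (x-y)^m$ is forced by the requirement $\Pi^{R,0}_x\Gamma^{R,0}_{yx}X_i = (\cdot - y)_i$ together with multiplicativity of the character, which I would justify as a short preliminary step.

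Second, I would apply $\Pi^{R,0}_x$ to the expansion above and evaluate at $x$. Two cancellations occur. By the recursive definition \eqref{recursive_model} and the fact that $R$ fixes planted trees, $(\Pi^{R,0}_x \CI(\sigma))(x)=0$ for every $\sigma$, killing the $\CI(\Gamma^{R,0}\tau)$ contribution; and $(\Pi^{R,0}_x X^\ell)(x) = \delta_{\ell,0}$ restricts the polynomial remainder to the $\ell = 0$ summands. What survives is
\begin{equs}
(\Pi^{R,0}_y\CI(\tau))(x) = \sum_{|m|<\textsf{deg}_0(\CI(\tau))}\frac{(x-y)^m}{m!}\,\gamma^{R,0}(\CI^{+,0}_m(\tau)).
\end{equs}

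Third, reinserting $\gamma^{R,0}(X^m)=(x-y)^m$ and using multiplicativity once more, the right hand side collapses to $\gamma^{R,0}\bigl(\sum_m \frac{X^m}{m!}\CI^{+,0}_m(\tau)\bigr)=\gamma^{R,0}(\tilde{\CI}^{+,0}(\tau))$ by the change of basis \eqref{change_bais}; the summation bound $|m|<\textsf{deg}_0(\CI(\tau))$ is absorbed automatically since $\CI^{+,0}_m(\tau)$ vanishes outside it, which is where the positive-degree hypothesis on $\CI(\tau)$ enters to ensure $\tilde{\CI}^{+,0}(\tau)$ is a nontrivial element of $\CT^{+,0}$. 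The argument is essentially a direct unpacking of definitions and I do not expect a genuine analytic obstacle; the only bookkeeping to watch is the source/target convention for the two indices of $\gamma^{R,0}$, so that the subscript $xy$ in the statement is consistent with the labelling produced by the compatibility $\Pi^{R,0}_y = \Pi^{R,0}_x\Gamma^{R,0}_{yx}$ used above.
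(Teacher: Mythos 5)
Your argument is correct in outline but follows a genuinely different route from the paper. The paper works entirely on the character side: it writes $\gamma^{R,0}_{xy}=(f^{R,0}_x\CA\otimes f^{R,0}_y)\Delta^{+}_0$, computes $\Delta^{+}_0\tilde{\CI}^{+,0}(\tau)$ and the antipode term $f^{R,0}_x(\CA\tilde{\CI}^{+,0}(\tau))=(\PPi^{R}\CI(\tau))(x)$, and matches the outcome against the expansion of $(\Pi^{R,0}_y\CI(\tau))(x)$ through the factorisation $\Pi^{R,0}_y=(\PPi^{R}\otimes f^{R,0}_y)\Delta_0$. You avoid the antipode and $\Delta^{+}_0$ altogether and argue on the model side: expand $\Delta_0\CI(\tau)$, use the re-expansion $\Pi^{R,0}_y=\Pi^{R,0}_x\Gamma^{R,0}_{yx}$ with $\Gamma^{R,0}_{yx}=(\Id\otimes\gamma^{R,0}_{yx})\Delta_0$, and evaluate at the base point so that $(\Pi^{R,0}_xX^\ell)(x)=\delta_{\ell,0}$ and the planted contributions drop. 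This is more elementary (no Hopf-algebraic computation with $\CA$) and makes the appearance of the change of basis $\tilde{\CI}^{+,0}$ transparent. Your worry about the index convention is legitimate but resolves consistently: the character the statement calls $\gamma^{R,0}_{xy}$ is, in the paper's own proof, exactly $(f^{R,0}_x\CA\otimes f^{R,0}_y)\Delta^{+}_0$, which is the character implementing the re-expansion of base point $y$ around base point $x$ that you use, so your identity is the intended one.

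The one step you must repair is the claim that $(\Pi^{R,0}_x\CI(\sigma))(x)=0$ for \emph{every} $\sigma$. From \eqref{recursive_model} this vanishing holds exactly when $\textsf{deg}_0(\CI(\sigma))>0$, since only then does the $k=0$ term occur in the recentring sum; when $\textsf{deg}_0(\CI(\sigma))\le 0$ one is left with $(K*\Pi^{R,0}_x\sigma)(x)$, which is nonzero in general (take $\sigma=\Xi_0$ in the $\varphi^4_3$ setting). In your argument the vanishing is applied to $\CI(\Gamma^{R,0}_{yx}\tau)$, i.e.\ to $\CI(\tau^{(1)})$ for every left leg $\tau^{(1)}$ of $\Delta_0\tau$, and the hypothesis of the proposition only controls the leading leg $\tau^{(1)}=\tau$. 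So the positive-degree hypothesis enters through this cancellation, not, as you state, merely through the nontriviality of $\tilde{\CI}^{+,0}(\tau)$, and for the lower-order legs you additionally need $\textsf{deg}_0(\CI(\tau^{(1)}))>0$. This extra positivity does hold for the trees generated by the rules considered in the paper (the trimmed left legs still conform to the rule and have degree above $-2$), and the paper's own proof relies on exactly the same fact when it replaces $f^{R,0}_x(\CI^{+,0}(\tau^{(1)}))$ by $-(K*\Pi^{R,0}_x\tau^{(1)})(x)$, justified there by the same positive-degree remark; so the repair is a short qualification rather than a new idea, but as written your universal claim is false and should be restricted accordingly.
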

\begin{proof}
We recall from \cite{reg} that
\begin{equs}
	\gamma^{R,0}_{xy}\left( \tilde{\CI}^{+,0}(\tau) \right) = \left( f^{R,0}_x \CA \otimes f^{R,0}_y \right) \Delta^{\! +}_0\left(\tilde{\CI}^{+,0}(\tau)\right),
\end{equs}
where  $\CA$ is the antipode for $\CT^{+}$, the existence of which is the content of \cite[Th. 8.16]{reg}. In particular, the antipode satisfies:
\begin{equs}
\quad \CA \one = \one, \quad \CA X_i = - X_i, \quad	\CA \mathcal{I}^{+,0}(\tau) = - \sum_{\ell}\CM^+ \left[ \frac{X^{\ell}}{\ell !} \left( \CI^{+,0}_{\ell} \otimes \CA \right) \right]\Delta_0 \tau.
\end{equs}
 One has by using \eqref{def_delta_+} 
\begin{equs}
	\Delta^{\! +}_0\left(\tilde{\CI}^{+,0}(\tau)\right) & = 	\Delta^{\! +}_0 \left( 	\sum_{\ell} \frac{X^{\ell}}{\ell !} \mathcal{I}^{+,0}_{ \ell}(\tau) \right)
	\\ & = \sum_{\ell,k} \left( \frac{X^k}{k!} \otimes \frac{X^{\ell}}{\ell !} \right)	\Delta^{\! +}_0 \left( 	  \mathcal{I}^{+,0}_{ k+\ell}(\tau) \right)
	\\ & = \sum_{\ell,k,r} \left( \frac{X^k}{k!} \mathcal{I}^{+,0}_{ k+\ell + r} \otimes \frac{X^{\ell}}{\ell !} \frac{(-X)^r}{r!}  \right)	\Delta_0 	  \tau 
	\\ & \hphantom{aaaaaaaaa} + \sum_{\ell,k} \left( \frac{X^k}{k!}   \otimes \frac{X^{\ell}}{\ell !} \mathcal{I}^{+,0}_{ k+\ell}(\tau)  \right)	
	\\ & = \sum_{k} \left( \frac{X^k}{k!} \mathcal{I}^{+,0}_{ k} \otimes \one \right)	\Delta_0 	  \tau 
	\\ & \hphantom{aaaaaaaaa} + \sum_{\ell,k} \left( \frac{X^k}{k!}   \otimes \frac{X^{\ell}}{\ell !} \mathcal{I}^{+,0}_{ k+\ell}(\tau)  \right),
\end{equs}
where we have used 
\begin{equs}
 \sum_{\ell,r}	\frac{X^{\ell}}{\ell !} \frac{(-X)^r}{r!} = \mathbf{1}.
\end{equs}
Furthermore one has $f^{R,0}_x\left(\CA\tilde{\CI}^{+,0}(\tau)\right) = \left(\PPi^{R}\CI(\tau)\right)(x)$ which we can check as follows
\begin{equs}
	f^{R,0}_x ( \CA \tilde{\CI}^{+,0}(\tau)) & = f^{R,0}_x \left( - \sum_{\ell,r} \CM^+\left[\frac{(-X)^{\ell}}{\ell !} \frac{(X)^{r}}{r !} \left( \mathcal{I}^{+,0}_{\ell +r} \otimes \CA \right)\right] \Delta_0 \tau \right) \\
	& = f^{R,0}_x \left( -  \CM^+\left[ \left( \mathcal{I}^{+,0}\otimes \CA \right)\right] \Delta_0 \tau \right)
	\\ & = \left( (K * \Pi^{R,0}_x)(\cdot)(x) \otimes f_x^{R,0} \CA \right) \Delta \tau
	\\ & = \left(K * \PPi^{R} \tau\right)(x)
	\\ & = \left(\PPi^{R} \mathcal{I}(\tau)\right)(x),
\end{equs}
where we have used the facts
\begin{equs}
	\left( \Pi^{R,0}_x \otimes f^{R,0}_x \CA \right) \Delta_0 = \PPi^{R},
\end{equs}
and
\begin{equs}
	f^{R,0}_x\left(\mathcal{I}^{+,0}(\tau)\right) = -  (K * \Pi^{R,0}_x \tau)(x).
\end{equs}
The last identity is true because $ \CI(\tau) $ is always of positive degree due to our assumption.
On the other hand, one has from \eqref{eq:equality1} that
\begin{equs}
(\Pi^{R,0}_y \mathcal{I}(\tau))(x) &=  \left(  (\PPi^{R} \cdot)(x) \otimes f^{R,0}_y \right) \Delta_0 \CI(\tau)
\\ &  = \left(  (\PPi^{R} \CI \cdot)(x) \otimes f^{R,0}_y \right) \Delta_0 \tau \\
&\hphantom{aaaaaaaa}+ \sum_{k,\ell} \left(  \left(\PPi^{R} \frac{X^{k}}{k !} \right)(x) \otimes f^{R,0}_y\left( \frac{X^{\ell}}{\ell!} \CI^{+,0}(\tau)  \right) \right).
\end{equs}
We conclude by using the fact that:
\begin{equs}
	 (\PPi^{R} X^{k})(x) =  f^{R,0}_x( \CA X^k) = x^k.
\end{equs}
	\end{proof}

\noindent As explained in the introduction, the presence of divergent constants in the formulation of $\Pi^{-}_x$ \cite[Eq. 2.18]{LOTP} prompted the authors to instead first establish control of its so-called rough-path increment $\delta \Pi_x^{R,1} - \Pi^{R,1}_y  d \Gamma_{yx}^R$. Here $d\Gamma_{yx}^{R}$ is a modelled distribution in Hairer's regularity structures sense \cite[Def. 3.1]{reg} that is a structurally similar analytical quantity to $\delta\Gamma^{R}_{yx}$. In our context $d\Gamma_{yx}^{R}$ takes the following form.
\begin{definition}\label{def:dGamma}
For $\tau\in\mathcal{T}_0$ one has
\begin{equs}\label{eq:dGammadef}
d \Gamma_{yx}^R \tau = \mathcal{Q}_0 \left( \Gamma_{yx}^{R,0} \otimes f_x^{R,0} \right) \hat{\Delta}_0 D_{\Xi} \tau,
\end{equs}
where $ \mathcal{Q}_0 $ projects to zero, all trees containing the noise $\Xi_1$.
\end{definition}

\begin{remark}
It should be noted here that in \cite{LOTP} the authors work with the algebraic transpose $(\Gamma^{R,0}_{yx})^*$ and hence their $d\Gamma^{*}$ corresponds to the dual of \eqref{eq:dGammadef}.
\end{remark}
That $d\Gamma^R_{yx}$ is linear, is obvious and due to the multiplicity of $\Gamma_{yx}^{R,0},\,f_x^{R,0},\text{ and }\hat{\Delta}_0$, and the Leibniz rule that $D_\Xi$ satisfies, we have the following identity
\begin{equs}\label{eq:dGammamult}
d\Gamma_{yx}^{R}(\tau_1\tau_2) = d\Gamma_{yx}^{R}(\tau_1)d\bar\Gamma_{yx}^{R}(\tau_2) + d\Gamma_{yx}^{R}(\tau_2)d\bar\Gamma_{yx}^{R}(\tau_1),
\end{equs}
where $d\bar\Gamma_{yx}^{R}\coloneqq \left( \Gamma_{yx}^{R,0} \otimes f_x^{R,0} \right) \hat{\Delta}_0$. $d\bar\Gamma^{R}_{yx}$ is easily seen to multiplicative.
With these characterisations of $\delta\Pi_x^{R,1}$ and $d\Gamma^R_{yx}$, we are now able to turn our attention to the indicated identities.
We begin here with the so-called continuity expression $d\Gamma^{R}_{yx} - \Gamma^{R,1}_{yz}d\Gamma^{R}_{zx}$. To understand the need for this expression, we return to the rough-path increment $\delta \Pi_x^{R,1} - \Pi^{R,1}_y  d \Gamma_{yx}^R$. Particularly, to gaining control of this rough-path increment, where the authors in \cite{LOTP} employ a reconstruction argument, which requires as a hypothesis the control of the continuity expression $d\Gamma^{R}_{yx} - \Gamma^{R,1}_{yz}d\Gamma^{R}_{zx}$. For the details of this kind of reconstruction argument, we refer the readers to the discussion around \cite[Lem. 4.8]{LO22}.

The next theorem we present will establish the relationship between $\delta\Pi^{R,1}$ and $d\Gamma^{R}$ with a relatively lax assumption on $R$. This theorem is of central importance in this work because it can be used to extend our arguments here, to non-linearities coming from equations beyond the quasilinear parabolic one the authors consider in \cite{LOTP}.

\begin{theorem}\label{th:local without R} For every $\tau\in\mathcal{T}_0 $ satisfying Assumption~\ref{assumpt1}, and $R$ satisfying Assumption~\eqref{assumpt3} one has
\begin{equs} \label{local without R}
\left( \hat{\Pi}^{R,1}_y  d \Gamma_{yx}^R  \tau \right)(y)  =   \left(\delta \hat{\Pi}_x^{R,1} \tau \right)(y),
\end{equs}
and hence one has for $\tau\in\mathcal{T}_{0}$
\begin{equs} \label{local R}
\left( \Pi^{R,1}_y  d \Gamma_{yx}^R  \tau \right)(y)  =   \left(\delta \Pi_x^{R,1} \tau \right)(y).
\end{equs}
\end{theorem}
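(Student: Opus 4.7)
The plan is to prove \eqref{local without R} directly via the magic formula (Proposition~\ref{prop:magicformula}) and then derive \eqref{local R} using the commutation properties of the preparation map $R$. The key insight is that once both sides are expressed in the factorised form $(\hat\Pi^{R,0}\otimes f^{R,0})\hat\Delta_0$, the identity reduces to a cancellation between the Taylor subtractions in $\hat\Delta_0$ (governed by $\textsf{deg}_1$) and those in the recursive definition of $\hat\Pi^{R,1}$ (also governed by $\textsf{deg}_1$). For the right-hand side, I apply the $\hat\Pi$-version of Proposition~\ref{malliavin_derivative_model1} (which is established in the same induction as its proof) followed by Proposition~\ref{prop:magicformula} to obtain $\delta\hat\Pi_x^{R,1}\tau = (\hat\Pi_x^{R,0}\otimes f_x^{R,0})\hat\Delta_0 D_\Xi\tau$. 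For the left-hand side, since $d\Gamma_{yx}^R\tau\in\mathcal{T}_0$ by the action of $\mathcal{Q}_0$, identity~\eqref{id_T_0} together with Proposition~\ref{prop:magicformula} gives $\hat\Pi_y^{R,1}d\Gamma_{yx}^R\tau = \hat\Pi_y^{R,0}d\Gamma_{yx}^R\tau$; after invoking the model property $\hat\Pi_y^{R,0} = \hat\Pi_x^{R,0}\Gamma_{yx}^{R,0}$, the task reduces to showing that the contributions killed by $\mathcal{Q}_0$ on the left-hand side exactly match the Taylor-subtraction discrepancy between $\hat\Pi^{R,0}$ and $\hat\Pi^{R,1}$ on the right-hand side at $z=y$.

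The heart of the argument then uses Assumption~\ref{assumpt1} to write $D_\Xi\tau = \sum_i\mathcal{I}_{a_i'}(D_\Xi\tau_i')\tau_i + \Xi_1\tau'$ and exploits the multiplicativity of $\hat\Delta_0$ together with \eqref{id_T_0} (applied to the $\mathcal{T}_0$-factors $\tau_i$, $\tau'$) to reduce the computation to the action of $\hat\Delta_0$ on each $\mathcal{I}_{a_i'}(D_\Xi\tau_i')$ and on $\Xi_1$. The positivity hypothesis $\textsf{deg}_0(\mathcal{I}_{a_i'}(D_\Xi\tau_i'))>0$ activates the full truncation sum in the recursive formula \eqref{EqDefnDeltaHatCirc} for $\hat\Delta_0\mathcal{I}_{a_i'}(\cdot)$, indexed by $|\ell|\geq\textsf{deg}_1(\mathcal{I}_{a_i'}(D_\Xi\tau_i'))$, and this sum matches exactly the Taylor subtractions appearing in the recursive definition \eqref{recursive_model} of $\hat\Pi_x^{R,1}$ on the same planted tree. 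Evaluating at $z=y$, the monomials $X^k$ with $k\neq 0$ vanish and planted trees of positive $\textsf{deg}_0$ behave compatibly with the base-point cancellation in \eqref{recursive_model}, so that the surviving terms assemble into the claimed equality.

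For \eqref{local R}, I pass from the $\hat\Pi$-identity using $\Pi_x^{R,i} = \hat\Pi_x^{R,i}R$ together with the commutations $RD_\Xi = D_\Xi R$ from Definition~\ref{DefnPreparationMap}, $(R\otimes\text{Id})\hat\Delta_0 = \hat\Delta_0 R$ from Lemma~\ref{lem:req2}, and $R\mathcal{Q}_0 = \mathcal{Q}_0 R$ from Assumption~\ref{assumpt3}. These push $R$ through the definition of $d\Gamma_{yx}^R$ and convert \eqref{local R} into an instance of \eqref{local without R} applied to $R\tau$, noting that $R\tau$ still lies in $\mathcal{T}_0$ by stability of the rules. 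The main obstacle will be the fine bookkeeping at $z=y$: aligning the various Taylor-expansion thresholds ($\textsf{deg}_0$ versus $\textsf{deg}_1$) in the magic formula, in the definition of $\hat\Pi^{R,i}$, and in $\hat\Delta_0$, and showing that the terms killed by $\mathcal{Q}_0$ (those carrying $\Xi_1$ on the left factor of the tensor) precisely compensate the gap between the $\hat\Pi^{R,0}$ and $\hat\Pi^{R,1}$ truncations — a cancellation that crucially relies on the positivity condition in Assumption~\ref{assumpt1}.
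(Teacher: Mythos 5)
Your overall architecture matches the paper's: establish \eqref{local without R} first, using the $\hat\Pi$-versions of Proposition~\ref{malliavin_derivative_model1} and of the magic formula Proposition~\ref{prop:magicformula}, the identification of $\hat\Pi^{R,1}$ with $\hat\Pi^{R,0}$ on $\mathcal{T}_0$ via \eqref{id_T_0}, and the re-expansion property of the model; then deduce \eqref{local R} by pushing $R$ through $d\Gamma^R_{yx}$ with \eqref{commute_D_Xi}, Lemma~\ref{lem:req2} and Assumption~\ref{assumpt3}. That second half of your plan is essentially identical to the paper's computation and is fine. The problem lies in the first half, exactly at the point where the theorem has content: how the projection $\mathcal{Q}_0$ sitting inside $d\Gamma^R_{yx}$ is removed. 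You propose to show that the contributions killed by $\mathcal{Q}_0$ ``exactly match the Taylor-subtraction discrepancy between $\hat\Pi^{R,0}$ and $\hat\Pi^{R,1}$ at $z=y$'', with the positivity in Assumption~\ref{assumpt1} ``activating the full truncation sum'' in \eqref{EqDefnDeltaHatCirc}. This is not the actual mechanism, and as stated it is not a proof. The truncation sum in $\hat\Delta_0$ is indexed by $|\ell|\ge \textsf{deg}_1$ and is there regardless of the sign of $\textsf{deg}_0$; and once you have invoked Proposition~\ref{prop:magicformula} for the right-hand side there is no residual ``discrepancy'' to be matched: $\bigl(\hat\Pi^{R,0}_x\otimes f^{R,0}_x\bigr)\hat\Delta_0 D_\Xi\tau = \hat\Pi^{R,1}_x D_\Xi\tau$ holds identically, at every point, all of the $\textsf{deg}_0$ versus $\textsf{deg}_1$ bookkeeping having already been absorbed into the magic formula. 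So your plan sets up a cancellation between two quantities, one of which does not exist, and leaves the other unproved.

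What Assumption~\ref{assumpt1} actually buys, and what the paper's chain \eqref{computation} uses, is simpler: every term that $\mathcal{Q}_0$ kills carries a planted factor $\mathcal{I}_{a'}(D_\Xi\tau')$ with $\textsf{deg}_0\bigl(\mathcal{I}_{a'}(D_\Xi\tau')\bigr)>0$, and for such a factor $\bigl(\hat\Pi^{R,0}_y\mathcal{I}_{a'}(D_\Xi\tau')\bigr)(y)=0$ by the recursive definition \eqref{recursive_model}. Hence the evaluation at the diagonal already acts as $\mathcal{Q}_0$, the projection can be dropped with no compensating term, and $\hat\Pi^{R,1}_y$ can be traded for $\hat\Pi^{R,0}_y$; one then concludes with the composition identity $\hat\Pi^{R,0}_y\Gamma^{R,0}_{yx}=\hat\Pi^{R,0}_x$ followed by the magic formula and Proposition~\ref{malliavin_derivative_model1}. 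Note in this respect that the model property you quote, $\hat\Pi^{R,0}_y=\hat\Pi^{R,0}_x\Gamma^{R,0}_{yx}$, is not the form you need here: the $\Gamma^{R,0}_{yx}$ is already inside $d\Gamma^R_{yx}$, so it is the composed identity $\hat\Pi^{R,0}_y\Gamma^{R,0}_{yx}=\hat\Pi^{R,0}_x$ that gets used. Finally, your replacement of the paper's multiplicativity step (the Leibniz rule \eqref{eq:dGammamult} together with the auxiliary identity $\hat\Pi^{R,1}_y d\bar\Gamma^R_{yx}=\hat\Pi^{R,1}_x$ of \eqref{eq:proof3}) by the global decomposition of $D_\Xi\tau$ from Assumption~\ref{assumpt1} is workable in principle, but for the $\mathcal{T}_0$-factors $\tau_i$ you still need exactly the content of \eqref{eq:proof3} evaluated at $y$; so this is a reorganisation rather than a genuinely different argument, and it does not repair the missing step above.
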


\begin{proof}
Instead of the usual inductive argument, we prove the \eqref{local without R} for all $\tau\in\mathcal{T}_0$ and derive \eqref{local R} therefrom. It is easy to see that \eqref{local without R} holds for $\tau\in\{\mathbf{1},X_i,\Xi_0\}$. To establish its "multiplicativity" we note first that for $\tau\in\mathcal{T}_{0}$ we have
\begin{equs}
\hat{\Pi}^{R,1}_y  d \bar\Gamma_{yx}^R  \tau &=  \left(\hat{\Pi}_y^{R,1}\Gamma_{yx}^{R,0}\otimes f_x^{R,0} \right) \hat{\Delta}_0 \tau  \\
&= \left(\hat{\Pi}_y^{R,0}\Gamma_{yx}^{R,0}\otimes f_x^{R,0} \right) \hat{\Delta}_0 \tau \label{eq:proof3}\\
&= \left(\hat{\Pi}_x^{R,0}\otimes f_x^{R,0} \right) \hat{\Delta}_0 \tau \\
&= \hat{\Pi}_x^{R,1}\tau.
\end{equs}
With \eqref{eq:proof3} in hand, if we are given $\tau,\bar\tau\in\mathcal{T}_0$, for which \eqref{local without R} holds, we see using the multiplicativity of $\hat\Pi^{R,1}$, the product rule and \eqref{eq:dGammamult}:
\begin{equs}
\left( \hat{\Pi}^{R,1}_y  d \Gamma_{yx}^R  \left[\tau\bar\tau\right] \right)(y) &= \Bigl( \left(\hat{\Pi}^{R,1}_y  d \Gamma_{yx}^R \tau\right) \hat{\Pi}^{R,1}_yd \bar\Gamma_{yx}^R\bar\tau \\
&\hphantom{aaaaaaaaa}+ \left(\hat{\Pi}^{R,1}_y  d \bar\Gamma_{yx}^R \tau\right) \hat{\Pi}^{R,1}_yd\Gamma_{yx}^R\bar\tau\Bigr)(y)\\
&= \left( \left(\delta \hat{\Pi}_x^{R,1} \tau\right) \hat{\Pi}_x^{R,1}\bar\tau + \left(\hat{\Pi}_x^{R,1} \tau\right) \delta \hat{\Pi}_x^{R,1}\bar\tau\right)(y) \label{eq:proofmult} \\
&= \delta\left(\hat{\Pi}_x^{R,1}\left[\tau\bar\tau\right]\right)(y).
\end{equs} 
This means that we need only prove \eqref{local without R} for decorated trees of the form $ \mathcal{I}_{a}(\tau) $. We know from Assumption~\ref{assumpt1} that
\begin{equs}
\textnormal{\textsf{deg}}_0( \mathcal{I}_{a}(D_\Xi \tau) ) > 0,
\end{equs}
which implies that one has
\begin{equs}
\left( \hat{\Pi}^{R,0}_y \mathcal{I}_a(\tau) \right) (y) = 0.
\end{equs}
Then, one has
\begin{equs}
	\label{computation}
	\begin{aligned}
\left( \hat{\Pi}^{R,1}_y  d \Gamma_{yx}^R  \mathcal{I}_a(\tau) \right)(y) & = \left( \hat{\Pi}^{R,1}_y \mathcal{Q}_0 \left( \Gamma_{yx}^{R,0} \otimes f_x^{R,0} \right) \hat{\Delta}_0 \mathcal{I}_a(D_{\Xi} \tau) \right)(y)
\\ & = \left( \left(  \hat{\Pi}^{R,0}_y \Gamma_{yx}^{R,0} \cdot \right)(y) \otimes f_x^{R,0} \right) \hat{\Delta}_0 \mathcal{I}_a(D_{\Xi} \tau)
\\ & =\left( \left(  \hat{\Pi}^{R,0}_x  \cdot \right)(y) \otimes f_x^{R,0} \right) \hat{\Delta}_0 \mathcal{I}_a(D_{\Xi} \tau)
\\ & = \left( \hat{\Pi}^{R,1}_x \mathcal{I}_a(D_{\Xi} \tau) \right) (y)
\\ & = \left(\delta \hat{\Pi}_x^{R,1} \mathcal{I}_a{\tau} \right)(y),
\end{aligned}
\end{equs}
where the Assumption~\ref{assumpt1} is required for doing away with $ \mathcal{Q}_0 $ and transforming $ \hat{\Pi}^{R,1}_y $ into  $ \hat{\Pi}^{R,0}_y $. Indeed, one has
\begin{equs}
	(\hat{\Pi}^{R,1}_y \mathcal{Q}_0  \Gamma_{yx}^{R,0} \mathcal{I}_a(D_{\Xi} \tau))(y) = 	(\hat{\Pi}^{R,0}_y   \Gamma_{yx}^{R,0} \mathcal{I}_a(D_{\Xi} \tau))(y).
	\end{equs}
This due to the fact that 
\begin{equs}
	\textnormal{\textsf{deg}}_0( \mathcal{I}_{a}(D_\Xi \tau') ) > 0,
\end{equs}
for every $ \tau' $ subtree of $ \tau $ including the root of $ \tau $ and therefore
\begin{equs}
	(\hat{\Pi}^{R,0}_y  \mathcal{I}_a(D_{\Xi} \tau'))(y) = 0.
\end{equs}
The evaluation at the point $ y $ acts as the projection $ \CQ_0 $. In \eqref{computation}, Proposition~\ref{prop:magicformula} is used in the fourth equality and Proposition~\ref{malliavin_derivative_model1} is used for the last equality. These propositions are used with $ \Pi $ replaced by $ \hat{\Pi} $ which is justified because in proving the proposition for the former we proved it for the latter as well.
 This proves \eqref{local without R} on the whole of $\mathcal{T}_{0}$.
Now to see how \eqref{local without R} implies \eqref{local R} we recall the commutative properties of $R$ - as noted in \eqref{EqCommutationRDelta}, \eqref{commute_D_Xi}, \eqref{commutation_Delta1_R}, and \eqref{eq:Gammacommute} - and the fact that $R$ maps into $\mathcal{T}_{0}$. Then with \eqref{local without R} at hand, we can argue that
\begin{equs}
\left( \Pi^{R,1}_y  d \Gamma_{yx}^R  \tau \right)(y) &= \left( \hat\Pi^{R,1}_y  R d \Gamma_{yx}^R  (\tau) \right)(y) \\
&= \left( \hat{\Pi}^{R,0}_y \mathcal{Q}_0 (R\otimes\text{id})\left( \Gamma_{yx}^{R,0} \otimes f_x^{R,0} \right) \hat{\Delta}_0 D_\Xi\,(\tau) \right)(y) \\
&=\left( \hat{\Pi}^{R,0}_y \mathcal{Q}_0 \left( \Gamma_{yx}^{R,0} \otimes f_x^{R,0} \right)(R\otimes\text{id}) \hat{\Delta}_0 D_\Xi\,(\tau) \right)(y) \\
&=\left( \hat{\Pi}^{R,0}_y \left( \Gamma_{yx}^{R,0} \otimes f_x^{R,0} \right)\hat{\Delta}_0 D_\Xi\,(R\tau) \right)(y) \\
&= \left(\hat\Pi^{R,1}_{x} \left(D_\Xi\,R\tau\right)\right)(y) \\
&= \left(\hat\Pi^{R,1}_{x}R\left(D_\Xi\tau\right)\right)(y) \\
&= \left(\delta \Pi_x^{R,1} \tau \right)(y).
\end{equs}
\end{proof}

We have remarked before that it is the lack of regularity in $\Pi^{-}_x$ that prompts the authors to exploit the rough-path increment, but it should be emphasised that the reason they are able to do so, is that they see no renormalisation when evaluating the rough-path increment at the diagonal. In particular, they have a divergence-free formula \cite[Eq. 4.50]{LOTP}, that they use in their reconstruction argument. In our next theorem, we derive the same formula in our context and exhibit the fact that we indeed see no renormalisation on the diagonal.
\begin{theorem} \label{main_result} Given a preparation map $ R $ such that it satisfies Assumption~\ref{assumpt2}, one has for $ \tau,\tau_1,\hdots,\tau_i \in \mathcal{T}_0 $, that
\begin{equs}
\left( F_{xy} \left[\left(\prod_i \mathcal{I}(\tau_i)\right) \mathcal{I}_2(\tau)\right] \right)(y) =  \left(\prod_i \left( \hat\Pi^{R,1}_x \mathcal{I}(\tau_i) \right) (y)\right)  (\hat F_{xy} \mathcal{I}_2(\tau))(y)
\end{equs}
where $F_{xy} = \delta \Pi_x^{R,1} - \Pi^{R,1}_y  d \Gamma_{yx}^R$ and $\hat F_{xy}=\delta\hat\Pi^{R,1}_{x} - \hat\Pi_{y}^{R,1}d\Gamma_{yx}^{R}$.
\end{theorem}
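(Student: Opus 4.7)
The plan is to first establish a Leibniz-type identity for $\hat F_{xy}$ on products of trees in $\mathcal{T}_0$, namely
\begin{equation*}
\hat F_{xy}(\tau_1\tau_2) = \hat F_{xy}(\tau_1)\,\hat\Pi^{R,1}_x(\tau_2) + \hat\Pi^{R,1}_x(\tau_1)\,\hat F_{xy}(\tau_2).
\end{equation*}
For the $\delta\hat\Pi^{R,1}_x$ part this is the product rule for the Malliavin derivative together with the multiplicativity of $\hat\Pi^{R,1}_x$. For the $\hat\Pi^{R,1}_y d\Gamma^R_{yx}$ part I combine \eqref{eq:dGammamult} (which makes $d\Gamma^R_{yx}$ a derivation over the multiplicative $d\bar\Gamma^R_{yx}$), the multiplicativity of $\hat\Pi^{R,1}_y$, and the identity $\hat\Pi^{R,1}_y\, d\bar\Gamma^R_{yx}\tau = \hat\Pi^{R,1}_x\tau$ valid on $\mathcal{T}_0$, which appears as the chain of equalities culminating in \eqref{eq:proof3} inside the proof of Theorem~\ref{th:local without R} and ultimately rests on Proposition~\ref{prop:magicformula}.

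Iterating this Leibniz rule on $\sigma_1 := \prod_i \mathcal{I}(\tau_i)$ yields
\begin{equation*}
\hat F_{xy}(\sigma_1)(y) = \sum_i \hat F_{xy}(\mathcal{I}(\tau_i))(y)\prod_{j\neq i}\hat\Pi^{R,1}_x(\mathcal{I}(\tau_j))(y).
\end{equation*}
Since each $\mathcal{I}(\tau_i)$ lies in $\mathcal{T}_0$ and Assumption~\ref{assumpt1} is in force, equation~\eqref{local without R} of Theorem~\ref{th:local without R} gives $\hat F_{xy}(\mathcal{I}(\tau_i))(y) = 0$, and therefore $\hat F_{xy}(\sigma_1)(y) = 0$. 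A further application of the Leibniz rule to $\sigma = \sigma_1\,\mathcal{I}_2(\tau)$, evaluated at $y$, produces
\begin{equation*}
\hat F_{xy}(\sigma)(y) = \prod_i \hat\Pi^{R,1}_x(\mathcal{I}(\tau_i))(y)\cdot\hat F_{xy}(\mathcal{I}_2(\tau))(y),
\end{equation*}
which matches the asserted right-hand side.

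It remains to replace $\hat F_{xy}$ by $F_{xy}$ on the left. Writing $\Pi^{R,1}_\bullet = \hat\Pi^{R,1}_\bullet R$ and using the commutation of $R$ with $D_\Xi$, $\hat\Delta_0$, $\Gamma^{R,0}_{yx}$ and $\mathcal{Q}_0$ (the last via Assumption~\ref{assumpt3}, assumed throughout the section) one checks $R\, d\Gamma^R_{yx} = d\Gamma^R_{yx}\, R$ on $\mathcal{T}_0$, whence $F_{xy}(\sigma)(y) = \hat F_{xy}(R\sigma)(y)$. Assumption~\ref{assumpt2} then expands $R\sigma = \sigma + \sum_j c_j \prod_k \mathcal{I}(\tau_{j,k})$, and the extra pure-$\mathcal{I}$ summands contribute nothing at $y$ by exactly the iterated-Leibniz argument of the previous paragraph combined with \eqref{local without R} applied to each $\mathcal{I}(\tau_{j,k})$. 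The principal obstacle is the Leibniz identity of the first step: while the rule for $\delta$ is automatic, its analogue for $d\Gamma^R_{yx}$ hinges on the somewhat subtle derivation structure~\eqref{eq:dGammamult} together with the coincidence $\hat\Pi^{R,1}_y\, d\bar\Gamma^R_{yx} = \hat\Pi^{R,1}_x$ on $\mathcal{T}_0$; once these are in place, the remaining steps are essentially bookkeeping.
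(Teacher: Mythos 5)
Your proposal is correct and follows essentially the same route as the paper: reduce $F_{xy}$ to $\hat F_{xy}R$ via the commutation properties of $R$, dispose of the $(R-\id)$ terms using Assumption~\ref{assumpt2} together with the vanishing of $\hat F_{xy}$ on products $\prod_i\mathcal{I}(\tau_i)$ coming from \eqref{local without R}, and extract the product formula through the derivation structure \eqref{eq:dGammamult} and the identity $\hat\Pi^{R,1}_y d\bar\Gamma^R_{yx}=\hat\Pi^{R,1}_x$ on $\mathcal{T}_0$. The only cosmetic difference is that you state the Leibniz rule for $\hat F_{xy}$ as a general identity and iterate it, whereas the paper carries out the two-factor computation explicitly and extends inductively.
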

\begin{proof} We remark for future reference that $F_{xy}$ is linear but fails to be multiplicative because $d\Gamma^{R}_{yx}$ fails to be so. Then we have the following relationship:
\begin{equs}\label{eq:FandFhat}
\hat F_{xy}R &= (\delta\hat\Pi^{R,1}_{x} - \hat\Pi_{y}^{R,1}d\Gamma_{yx}^{R})R \\
&= (\hat\Pi^{R,1}_{x}D_\Xi - \hat\Pi_{y}^{R,1}d\Gamma_{yx}^{R})R \\
&= \hat\Pi^{R,1}_{x}D_\Xi R - \hat\Pi_{y}^{R,1}d\Gamma_{yx}^{R}R \\
&= F_{xy},
\end{equs}
where we have used the following observations that follow easily from definitions and the commutativity properties of $R$
\begin{equs}
\Pi^{R,1}_y  d \Gamma_{yx}^R &= \hat{\Pi}^{R,1}_y  d \Gamma_{yx}^R R,\qquad \delta\Pi_{x}^{R,1} &= \hat\Pi^{R,1}_{x}D_\Xi R.
\end{equs}
From \eqref{local without R} and in particular the multiplicativity we saw in \eqref{eq:proofmult}, we see that
\begin{equs}\label{eq:proof2}
\hat{F}_{xy}\left(\prod_i\mathcal{I}(\tau_i)\right)(y) = 0.
\end{equs}
Now given the trees $\tau, \tau_i \in \mathcal{T} $, we define the quantity
\begin{equs}
A = \left(\prod_i \mathcal{I}(\tau_i)\right) \mathcal{I}_2(\tau),
\end{equs}
which due to our assumption on the renormalisation map $R$, is such that
\begin{equs}
(R-\id) A = \sum_{j} c_j \prod_{i \in I_j} \mathcal{I}(\tau_{i,j}).
\end{equs}	  
Using the equality above, the linearity of $\hat F_{xy}$  and \eqref{eq:proof2} gives us that
\begin{equs}
\left(\hat F_{xy}(R - \text{id})A\right)(y) &= \left(\hat F_{xy}\sum_{j} c_j \prod_{i \in I_j}\mathcal{I}(\tau_{i,j})\right)(y) \\
&= \left(\sum_{j} c_j \hat F_{xy}\left(\prod_{i \in I_j} \mathcal{I}(\tau_{i,j})\right)\right)(y) = 0.
\end{equs}
Due to the equality above and \eqref{eq:FandFhat}, it is enough to prove that 
\begin{equs}
\left(\hat F_{xy} \prod_i \mathcal{I}(\tau_i) \mathcal{I}_2(\tau) \right)(y) =  \prod_i \left( \hat\Pi^{R,1}_x \mathcal{I}(\tau_i) \right) (y)  (\hat F_{xy} \mathcal{I}_2(\tau))(y).
\end{equs}
This can be proved inductively, indeed the base case follows from \eqref{eq:proof3} and \eqref{eq:proof2}:
\begin{equs}
\left(\hat F_{xy}\mathcal{I}(\tau) \mathcal{I}_2(\bar\tau) \right)(y) &= \left(\delta\hat\Pi_x^{R,1}\left(\mathcal{I}(\tau)\mathcal{I}_2(\bar\tau)\right) - \hat\Pi^{R,1}_yd\Gamma^{R}_{yx}\left(\mathcal{I}(\tau)\mathcal{I}_2(\bar\tau)\right)\right)(y) \\
&=  \Bigl(\delta\hat\Pi_x^{R,1}\mathcal{I}(\tau)\hat\Pi_x^{R,1}\mathcal{I}_2(\bar\tau)+\delta\hat\Pi_x^{R,1}\mathcal{I}_2(\bar\tau)\hat\Pi_x^{R,1}\mathcal{I}(\tau) \\
&-  \hat\Pi^{R,1}_y\left(d\Gamma^{R}_{yx}\mathcal{I}(\tau)d\bar\Gamma^{R}_{yx}\mathcal{I}_2(\bar\tau)\right) - \hat\Pi^{R,1}_y\left(d\Gamma^{R}_{yx}\mathcal{I}(\tau)d\bar\Gamma^R_{yx}\mathcal{I}_2(\bar\tau)\right)\Bigr)(y) \\
&= \left(\hat\Pi^{R,1}_x\mathcal{I}(\tau)\right)(y)\left[\delta\hat\Pi_x^{R,1}\mathcal{I}_2(\bar\tau)-\hat\Pi_{y}^{R,1}d\Gamma_{yx}^{R}\CI_2(\bar\tau)\right](y)\\
&+\left(\hat\Pi^{R,1}_x\mathcal{I}_2(\bar\tau)\right)(y)\left[\delta\hat\Pi_x^{R,1}\mathcal{I}(\tau)-\hat\Pi^{R,1}_yd\Gamma^{R}_{yx}\mathcal{I}(\tau)\right](y) \\
&= \left(\hat\Pi^{R,1}_x\mathcal{I}(\tau)\right)(y)\hat{F}_{xy}\mathcal{I}_2(\bar\tau)(y) + \left(\hat\Pi^{R,1}_x\mathcal{I}_2(\bar\tau)\right)(y)\hat{F}_{xy}\mathcal{I}(\tau)(y) \\
&= \left(\hat\Pi^{R,1}_x\mathcal{I}(\tau)\right)(y)\hat{F}_{xy}\mathcal{I}_2(\bar\tau)(y).
\end{equs}
From here it is only a matter of extending inductively to $\prod_i\mathcal{I}(\tau_i)$, which is done in the same vein as the base step above.
\end{proof}

In the next corollary, we provide a more explicit formula for the term involving $ \mathcal{I}_2(\tau)  $. It basically says that one has a commutative identity with derivatives and the map described above.

\begin{corollary} \label{coro_identi} For every decorated tree $ \tau $, one has
\begin{equs}
\left(\hat F_{xy}  \mathcal{I}_2(\tau)\right)(y) = \left(\partial_{x_1}^2 \left( \delta \hat\Pi_x^{R,1} - \hat\Pi^{R,1}_y d \Gamma_{yx}^R  \right)(\mathcal{I}(\tau))\right)(y).
\end{equs} 
\end{corollary}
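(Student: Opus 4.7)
The strategy is to view $\CI_2 = \CD_{(0,2)}\CI$ and to show that $\hat F_{xy} = \delta\hat\Pi^{R,1}_x - \hat\Pi^{R,1}_y d\Gamma^R_{yx}$ essentially commutes with the abstract derivative $\CD_{(0,2)}$, exchanging it for the spatial derivative $\partial^2_{x_1}$ upon application of $\hat\Pi^{R,1}$. Concretely, the plan is to establish the two identities
\begin{equs}
(\delta\hat\Pi^{R,1}_x\CI_2(\tau))(y) &= \partial^2_{x_1}(\delta\hat\Pi^{R,1}_x\CI(\tau))(y), \\
(\hat\Pi^{R,1}_y d\Gamma^R_{yx}\CI_2(\tau))(y) &= \partial^2_{x_1}(\hat\Pi^{R,1}_y d\Gamma^R_{yx}\CI(\tau))(y),
\end{equs}
and subtract.

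The first identity is immediate: Remark~\ref{rem:derivativecommutativity} gives $\hat\Pi^{R,1}_x\CI_2(\tau) = \partial^2_{x_1}\hat\Pi^{R,1}_x\CI(\tau)$, and since $\delta$ is a Fr\'echet derivative in the noise variable, it commutes with the spatial derivative $\partial^2_{x_1}$.

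The second identity proceeds in two stages. First, we aim for the purely algebraic commutation $d\Gamma^R_{yx}\CI_a(\tau) = \CD_a \, d\Gamma^R_{yx}\CI(\tau)$. Since $\CI_a(\tau)$ has no noise at its root, $D_\Xi\CI_a(\tau) = \CI_a(D_\Xi\tau) = \CD_a\CI(D_\Xi\tau)$; Proposition~\ref{prop:comDdelta_0} provides $(\CD_p\otimes\Id)\Delta_0 = \Delta_0\CD_p$, which lifts to $\hat\Delta_0$ through Proposition~\ref{lem:delhatdel} and, by the same reasoning, to $\Gamma^{R,0}_{yx} = (\Id\otimes\gamma^{R,0}_{yx})\Delta_0$ (since $\CD_p$ and $\gamma^{R,0}_{yx}$ act on disjoint tensor factors). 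The projection $\CQ_0$ trivially commutes with $\CD_p$ because the latter does not create or destroy noise decorations. Chaining these commutations through the definition of $d\Gamma^R_{yx}$ then yields the desired algebraic identity with $a = (0,2)$. Second, and this is the main obstacle, one must upgrade Remark~\ref{rem:derivativecommutativity} from planted trees to the arbitrary $\sigma = X^k\prod_j\CI_{a_j}(\tau_j)$ arising in $d\Gamma^R_{yx}\CI(\tau)$, proving $(\hat\Pi^{R,1}_y\CD_p\sigma)(y) = \partial^p(\hat\Pi^{R,1}_y\sigma)(y)$. Expanding each side via the multi-index Leibniz rule — on the abstract side for $\CD_p$ and on the analytic side for $\partial^p$ applied to the product $(z-y)^k\prod_j\partial^{a_j}\hat\Pi^{R,1}_y\CI(\tau_j)(z)$ — the factor $(z-y)^k$ differentiated at $z = y$ forces the derivative multi-index hitting $X^k$ to equal $k$ exactly, and the identity $\binom{p}{k}\binom{p-k}{q_1,\ldots,q_n} = \binom{p}{k,q_1,\ldots,q_n}$ makes the two sums coincide term by term. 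Applying this extension at $y$ to $\CD_{(0,2)}d\Gamma^R_{yx}\CI(\tau)$ yields the second identity, and subtraction completes the proof.
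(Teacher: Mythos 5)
Your proposal is correct and follows essentially the same route as the paper's proof: split $\hat F_{xy}$ into its two terms, commute the abstract derivative $\CD_{(0,2)}$ through $d\Gamma^{R}_{yx}$ using Proposition~\ref{prop:comDdelta_0} lifted via Proposition~\ref{lem:delhatdel} and via $\Gamma^{R,0}_{yx}=(\Id\otimes\gamma^{R,0}_{yx})\Delta_0$, and trade abstract for concrete derivatives through Remark~\ref{rem:derivativecommutativity}. The Leibniz/multinomial argument you add for general products $X^k\prod_j\CI_{a_j}(\tau_j)$ evaluated at $y$ fills in a step the paper leaves implicit and is valid, though not needed in that generality, since $d\Gamma^{R}_{yx}\CI(\tau)$ is a linear combination of planted trees and monomials only, on which $\hat\Pi^{R,1}_y\CD_p=\partial^p\hat\Pi^{R,1}_y$ already holds as an identity of functions.
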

\begin{proof}
This is an easy consequence of the commutativity properties, as noted in Proposition~\ref{prop:comDdelta_0} and \eqref{eq:derivativecommutativity}. Indeed one checks
\begin{equs}
\delta\hat\Pi^{R,1}_{x}\mathcal{I}_{2}(\tau) &= \hat\Pi^{R,1}_{x}D_\Xi\mathcal{I}_{2}(\tau) 
= \hat\Pi^{R,1}_{x}\mathcal{I}_{2}(D_\Xi\tau) \label{eq:malderivativecommutativity}\\
&= \partial^{2}_{x_1}\hat\Pi^{R,1}_{x}\mathcal{I}(D_\Xi\tau) 
= \partial^{2}_{x_1}\delta\hat\Pi^{R,1}_{x}\mathcal{I}(\tau),
\end{equs}
and that:
\begin{equs}
\hat\Pi^{R,1}_{y}d\Gamma^{R}_{yx}\mathcal{I}_{2}(\tau) &= \hat\Pi^{R,1}_{y}d\Gamma^{R}_{yx}D_{e_1}^{2}\CI(\tau)\\
&=\hat\Pi^{R,1}_{y}D^{2}_{e_1}d\Gamma^{R}_{yx}\CI(\tau)\\
&=\partial^{2}_{x_1}\hat\Pi^{R,1}_{y} d\Gamma^{R}_{yx}\mathcal{I}(\tau),
\end{equs}
where the second inequality is justified by noting that
\begin{equs}
\CD_p\hat{\Delta}_0 &= \left(\CD_p\otimes\Id\right)(\Id\otimes\hat{\Gamma}_0)\Delta_0 \\
&= (\Id\otimes\hat{\Gamma}_0)\left(\CD_p\otimes\Id\right)\Delta_0 \\
&\overset{\eqref{eq:comDdelta_0}}{=} (\Id\otimes\hat{\Gamma}_0)\Delta_0\CD_p = \hat{\Delta}_0\CD_p,
\end{equs} and similarly
\begin{equs}
\CD_p\Gamma^{R,0}_{yx} &= \left(\CD_p\otimes\Id\right)(\Id\otimes\gamma^{R,0}_{yx})\Delta_0 \\
&=(\Id\otimes\gamma^{R,0}_{yx})\left(\CD_p\otimes\Id\right)\Delta_0 \\
&=(\Id\otimes\gamma^{R,0}_{yx})\Delta_0\CD_p = \Gamma^{R,0}_{yx}\CD_p,
\end{equs}
from where we get:
\begin{equs}
\CD_pd \Gamma_{yx}^R &= \left(\CD_p\otimes\Id\right)\mathcal{Q}_0 \left( \Gamma_{yx}^{R,0} \otimes f_x^{R,0} \right) \hat{\Delta}_0 D_{\Xi} \\
&= \mathcal{Q}_0 \left(\CD_p\Gamma_{yx}^{R,0} \otimes f_x^{R,0} \right) \hat{\Delta}_0 D_{\Xi} \\
&= \mathcal{Q}_0 \left(\Gamma_{yx}^{R,0} \otimes f_x^{R,0} \right) \CD_p\hat{\Delta}_0 D_{\Xi} \\
&= \mathcal{Q}_0 \left(\Gamma_{yx}^{R,0} \otimes f_x^{R,0} \right) \hat{\Delta}_0 D_{\Xi} \CD_p \\
&= d\Gamma^R_{yx}\CD_p
\end{equs}
Now putting the two terms together and evaluating at the diagonal to kill positive degree trees in the second term, we get:
\begin{equs}
\left(\hat{F}_{xy} \mathcal{I}_2(\tau)\right)(y) & = \left(\left( \delta \hat\Pi_x^{R,1} - \hat\Pi^{R,1}_y d \Gamma_{yx}^R \right) \mathcal{I}_2(\tau)\right)(y) \\ 
& =  \left(\left(\partial_{x_1}^2 \delta\hat\Pi_x^{R,1} - \partial_{x_1}^2 \hat\Pi^{R,1}_y d \Gamma_{yx}^R \right) \mathcal{I}(\tau)\right)(y)\\
& = \left(\partial_{x_1}^2 \left( \delta  \hat\Pi_x^{R,1}  -  \hat\Pi^{R,1}_y d \Gamma_{yx}^R \right)  \mathcal{I}(\tau)\right)(y).
\end{equs}
\end{proof}
We finish the section by giving our version of \cite[Eq. 4.84]{LOTP}, which should be read in the context of the equation they have in \cite{LOTP}.
\begin{proposition} \label{ident_Pi}
One has for every $ \tau \in \mathcal{T}_0 $, such that  $\CI(\tau)$ is of positive degree, that
\begin{equs}
 \left( \delta \Pi_x^{R,1} - \Pi^{R,1}_y d \Gamma_{yx}^R \right) \mathcal{I}(\tau)  =  \left( \delta \Pi_x^{R,1} - \left( \delta \Pi_x^{R,1} \cdot \right)(y) -  {\Pi}^{R,1}_y P_{\mcI}d \Gamma_{yx}^R \right) \mathcal{I}(\tau),
\end{equs}
where $ P_{\mathcal{I}} $ is the projection onto planted trees of the form $ \mathcal{I}(\tau) $.
\end{proposition}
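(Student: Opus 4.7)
The identity reduces, upon cancelling the common $\delta\Pi_x^{R,1}\mathcal{I}(\tau)$ on both sides, to the claim
\[
\Pi_y^{R,1}\bigl(\Id - P_{\mathcal{I}}\bigr)d\Gamma_{yx}^R\mathcal{I}(\tau) \;=\; \bigl(\delta\Pi_x^{R,1}\mathcal{I}(\tau)\bigr)(y),
\]
which is where the work lies. My plan is to proceed in three steps. First, I would compute the projection $P_{\mathcal{I}}d\Gamma^R_{yx}\mathcal{I}(\tau)$ explicitly. Unfolding the recursive formula for $\hat{\Delta}_0\mathcal{I}(D_\Xi\tau)$ and observing that only the contribution $(\mathcal{I}\otimes\Id)\hat\Delta_0(D_\Xi\tau)$ can feed planted trees with an outer $\mathcal{I}$ through $\Gamma^{R,0}_{yx}$, combined with the identity $\Gamma^{R,0}_{yx}\mathcal{I}(\sigma) = \mathcal{I}(\Gamma^{R,0}_{yx}\sigma) + [\text{polynomials in }X]$, one extracts $P_{\mathcal{I}}d\Gamma^R_{yx}\mathcal{I}(\tau) = \mathcal{I}(d\Gamma^R_{yx}\tau)$ after applying $\mathcal{Q}_0$.

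Second, I would verify the identity at $z = y$. Theorem~\ref{th:local without R} gives $(\Pi^{R,1}_y d\Gamma^R_{yx}\mathcal{I}(\tau))(y) = (\delta\Pi^{R,1}_x\mathcal{I}(\tau))(y)$. On the other hand, since $\mathcal{I}(\tau)$ has positive $\textsf{deg}_1$ and $\mathcal{I}(d\Gamma^R_{yx}\tau)$ inherits positivity of degree through Assumption~\ref{assumpt1} applied to each tree in the sum expressing $d\Gamma^R_{yx}\tau$, the Taylor-subtraction built into the definition of $\hat\Pi^{R,1}_y\mathcal{I}(\cdot)$ forces $(\Pi^{R,1}_y\mathcal{I}(d\Gamma^R_{yx}\tau))(y) = 0$. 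Subtracting these two relations yields the claim at $z=y$.

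The main obstacle is upgrading this pointwise identity to one of distributions in $z$. The residual $(\Id - P_{\mathcal{I}})d\Gamma^R_{yx}\mathcal{I}(\tau)$ is a polynomial in $X$; applying $\Pi^{R,1}_y$ sends each $X^\ell$ to $(\cdot-y)^\ell$, so for the left-hand side to be constant in $z$, the polynomial must collapse to its constant term. To see this, I would match coefficients of $X^\ell$ for $\ell\ne 0$ coming from the two sources: (i) the polynomial tail of $\Gamma^{R,0}_{yx}\mathcal{I}(\sigma^{(1)}) - \mathcal{I}(\Gamma^{R,0}_{yx}\sigma^{(1)})$ contracted with $f^{R,0}_x(\sigma^{(2)})$ over $\hat\Delta_0(D_\Xi\tau) = \sum\sigma^{(1)}\otimes\sigma^{(2)}$, and (ii) the Taylor-subtraction $-\sum_{|\ell|\ge\textsf{deg}_1(\mathcal{I}\tau)}(X^\ell/\ell!)\otimes \mathcal{M}^+(\mathcal{I}^{+,0}_\ell\otimes\Id)\hat\Delta_0(D_\Xi\tau)$ built into $\hat\Delta_0\mathcal{I}_a$. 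The expected cancellation follows from the factorisation $\hat\Delta_0 = (\Id\otimes\hat\Gamma_0)\Delta_0$ of Proposition~\ref{lem:delhatdel} together with the defining relations of $\gamma^{R,0}_{yx}$ and $f^{R,0}_x$ via the antipode on $\mathcal{T}^{+}$.

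Alternatively and perhaps more cleanly, I would leverage Proposition~\ref{prop:magicformula} to rewrite $\Pi^{R,1}_y = (\Pi^{R,0}_y\otimes f^{R,0}_y)\hat\Delta_0$ and combine with $\Pi^{R,0}_y = \Pi^{R,0}_x\Gamma^{R,0}_{yx}$, so that the entire identity gets expressed purely in terms of $\Pi^{R,0}$-level objects, where the cancellation between the two polynomial sources becomes a formal consequence of $\hat\Delta_0$ being the natural curtailing coaction relating $\Pi^{R,0}$ and $\Pi^{R,1}$. The bulk of the technical work would then consist in carrying out this bookkeeping, for which I would argue inductively on the size of $\tau$, using the factorisation that $D_\Xi$ satisfies under Assumption~\ref{assumpt1}.
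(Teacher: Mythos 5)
Your reduction to the claim $\Pi^{R,1}_y(\id-P_{\mathcal{I}})d\Gamma^R_{yx}\mathcal{I}(\tau)=(\delta\Pi^{R,1}_x\mathcal{I}(\tau))(y)$ together with your second step is, in essence, the paper's entire proof: the paper writes the same elementary decomposition of $\Pi^{R,1}_y d\Gamma^R_{yx}$ into its $P_{\mathcal{I}}$ and $\id-P_{\mathcal{I}}$ parts, observes that the planted trees $\mathcal{I}(\tau')$ produced by $d\Gamma^R_{yx}\mathcal{I}(\tau)$ are of positive degree and hence killed either by the projection or by evaluation on the diagonal, and then runs the chain $\Pi^{R,1}_y\to\Pi^{R,0}_y\to\Pi^{R,0}_x$ followed by Proposition~\ref{prop:magicformula} and Proposition~\ref{malliavin_derivative_model1}; this is just the computation \eqref{computation} redone in place of a citation of Theorem~\ref{th:local without R}, so your shortcut through that theorem is equivalent. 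Your first step, the explicit identity $P_{\mathcal{I}}d\Gamma^R_{yx}\mathcal{I}(\tau)=\mathcal{I}(d\Gamma^R_{yx}\tau)$, is plausible but not needed: the paper only uses that the $P_{\mathcal{I}}$-part consists of positive-degree planted trees.

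The one place you go beyond the paper is your third step. You are right that $(\id-P_{\mathcal{I}})d\Gamma^R_{yx}\mathcal{I}(\tau)$ is a priori a polynomial in $X$ with possibly non-zero coefficients in front of $X^\ell$ for $\ell\neq 0$ (coming both from the polynomial tail of $\Gamma^{R,0}_{yx}\mathcal{I}(\cdot)$ and from the curtailment sum in $\hat{\Delta}_0\mathcal{I}_a$), so that reading the stated identity literally as an identity in the running variable requires these coefficients to collapse. You propose a bookkeeping strategy for this cancellation but do not execute it, and as a standalone argument your proposal is therefore incomplete at exactly this point. Note, however, that the paper's own proof does not address this either: it stops at the evaluation at $z=y$ and implicitly identifies the residual term with the constant so obtained. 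In short, you have reproduced the published argument and, in addition, correctly isolated (without resolving) the step that would be needed to upgrade it to a genuine functional identity.
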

\begin{proof}
This is a consequence of the elementary decomposition:
\begin{equs}
\Pi^{R,1}_y d \Gamma_{yx}^R = {\Pi}^{R,1}_y P_{\mcI}d \Gamma_{yx}^R + \Pi^{R,1}_y (\id - P_{\mathcal{I}}) d \Gamma_{yx}^R.
\end{equs}
Then, 
\begin{equs}
\left(\Pi^{R,1}_y (\id - P_{\mathcal{I}}) d \Gamma_{yx}^R \mathcal{I}(\tau)\right)(y) & = (\Pi^{R,0}_y  d \Gamma_{yx}^R \mathcal{I}(\tau))(y)
\\ & = \left(  (\Pi_{x}^{R,0} \cdot)(y) \otimes f_x^{R,0} \right) \hat{\Delta}_0 D_{\Xi} \mathcal{I}(\tau)
\\ & = \left( \Pi_x^{R,1} D_{\Xi} \mathcal{I}(\tau) \right)(y)
\\ & = \left( \delta \Pi_x^{R,1}  \mathcal{I}(\tau) \right)(y),
\end{equs}
where we have used Proposition~\ref{prop:magicformula} and Proposition~\ref{malliavin_derivative_model1}. The first equality is true because any tree of the form $ \CI(\tau') $ with $ \tau' $ a subtree of $ \tau $ is of positive degree as a consequence of our assumption. The term $ d \Gamma_{yx}^R \mathcal{I}(\tau) $ produces terms of this form that are either killed by the projection $ \id - P_{\mathcal{I}} $ or by the evaluation of the model on the diagonal:
\begin{equs}
	(\Pi^{R,0}_y \mathcal{I}(\tau'))(y) = 0,
\end{equs}
which allows us to conclude.
\end{proof}

\begin{proposition} \label{control_gamma}
	One has for $\tau\in\mathcal{T}_0$
	\begin{equs}
		\left(d\Gamma^{R}_{yx}-\Gamma_{yz}^{R,1}d\Gamma^{R}_{zx}\right) \tau= \sum_{(D_\Xi\tau)}\tau^{(1)}&\gamma^{R,0}_{yx}(\tau^{(2)}) \\
		&- \sum_{(D_\Xi\tau)}\gamma^{R,0}_{zx}(\tau^{(2)})\sum_{(\tau^{(1)})}\tau^{(1,1)}\gamma^{R,0}_{yz}(\tau^{(1,2)}),
	\end{equs}
	where $\sum_{(\cdot)}$ refers to the sweedler notation as in \eqref{eq:sweedlernotation}.
\end{proposition}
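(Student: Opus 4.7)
The plan is to verify the identity by expanding both sides in sweedler notation, with the key observation being that $\Gamma^{R,1}_{yz}$ coincides with $\Gamma^{R,0}_{yz}$ on the image of $d\Gamma^{R}_{zx}$, namely on $\mathcal{T}_0$.

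First I would handle the expansion of $d\Gamma^{R}_{yx}\tau$. Starting from the definition \eqref{eq:dGammadef}, write $d\Gamma^{R}_{yx}\tau = \mathcal{Q}_0(\Gamma^{R,0}_{yx}\otimes f^{R,0}_x)\hat\Delta_0 D_\Xi\tau$. Unpacking $\Gamma^{R,0}_{yx}$ via \eqref{eq:gammacoaction} produces a double expansion, which when combined with the $f^{R,0}_x$-contraction on the second slot of $\hat\Delta_0 D_\Xi\tau$ yields a combined sweedler-style expression denoted $\sum_{(D_\Xi\tau)}\tau^{(1)}\gamma^{R,0}_{yx}(\tau^{(2)})$—the first term on the RHS.

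Next I would reduce $\Gamma^{R,1}_{yz}d\Gamma^{R}_{zx}\tau$ to a similarly-expanded form. The projection $\mathcal{Q}_0$ appearing in \eqref{eq:dGammadef} guarantees that $d\Gamma^{R}_{zx}\tau \in \mathcal{T}_0$. On $\mathcal{T}_0$ the coactions $\Delta_0$ and $\Delta_1$ coincide (trees without $\Xi_1$ satisfy $\textsf{deg}_0=\textsf{deg}_1$, and the defining recursions of $\Delta_0$ and $\Delta_1$ match accordingly), while the characters $\gamma^{R,0}_{yz}$ and $\gamma^{R,1}_{yz}$ coincide on $\mathcal{T}^{+,0}$. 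The latter follows since $\Pi^{R,0}_y\sigma = \Pi^{R,1}_y\sigma$ for $\sigma\in\mathcal{T}_0$ (the models only differ in the length of Taylor expansions, which collapses when no $\Xi_1$ is present), so by \eqref{eq:recf} one has $f^{R,0}_y = f^{R,1}_y$ on $\mathcal{T}^{+,0}$, and the characters $\gamma^{R,i}$ are built from the $f^{R,i}$ by convolution with the antipode. This yields $\Gamma^{R,1}_{yz}d\Gamma^{R}_{zx}\tau = \Gamma^{R,0}_{yz}d\Gamma^{R}_{zx}\tau$.

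Using the sweedler expansion from the first step (with $x$ replaced by $z$ in the outer character), $d\Gamma^{R}_{zx}\tau = \sum_{(D_\Xi\tau)}\tau^{(1)}\gamma^{R,0}_{zx}(\tau^{(2)})$. Applying $\Gamma^{R,0}_{yz}$, using that $\gamma^{R,0}_{zx}(\tau^{(2)})$ is a scalar, and expanding $\Gamma^{R,0}_{yz}\tau^{(1)} = \sum_{(\tau^{(1)})}\tau^{(1,1)}\gamma^{R,0}_{yz}(\tau^{(1,2)})$ via \eqref{eq:gammacoaction} and \eqref{eq:sweedlernotation}, produces exactly the second term of the RHS. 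Subtracting the two contributions yields the claimed identity. The main obstacle is the rigorous justification that $\Gamma^{R,1}=\Gamma^{R,0}$ on $\mathcal{T}_0$ at the level of the characters, since the $\gamma^{R,i}$ are defined indirectly through convolution with the antipode on $\mathcal{T}^{+,i}$; a careful induction on the size of planted trees in $\mathcal{T}^{+,0}\subseteq\mathcal{T}^{+,1}$ is needed to match the two recursive constructions.
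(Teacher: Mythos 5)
There is a genuine gap at the very first step. You assert that $d\Gamma^{R}_{yx}\tau = \mathcal{Q}_0\left(\Gamma^{R,0}_{yx}\otimes f^{R,0}_x\right)\hat{\Delta}_0 D_{\Xi}\tau$ by itself collapses to the first term $\sum_{(D_\Xi\tau)}\tau^{(1)}\gamma^{R,0}_{yx}(\tau^{(2)})$ of the right-hand side (and likewise for $d\Gamma^{R}_{zx}\tau$). This is false in general: writing $\hat{\Delta}_0 D_{\Xi}\tau = D_{\Xi}\tau\otimes\one + \sum_{\widehat{(\tau)}}\hat\tau^{(1)}\otimes\hat\tau^{(2)}$ as in \eqref{eq:hatdelta0tri}, only the leading term yields $\mathcal{Q}_0\Gamma^{R,0}_{yx}D_\Xi\tau = \sum_{(D_\Xi\tau)}\tau^{(1)}\gamma^{R,0}_{yx}(\tau^{(2)})$; the non-leading components contribute $\sum_{\widehat{(\tau)}}\Gamma^{R,0}_{yx}(\hat\tau^{(1)})\,f^{R,0}_x(\hat\tau^{(2)})$, where $\hat\tau^{(1)}\in\mathcal{T}_0$ (so $\mathcal{Q}_0$ acts trivially) and $\hat\tau^{(2)}$ carries $\Xi_1$ and feeds into $f^{R,0}_x$. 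These corrections do not vanish: $\hat{\Delta}_0$ is non-trivial on $\mathcal{T}_1\setminus\mathcal{T}_0$ (identity \eqref{id_T_0} holds only on $\mathcal{T}_0$), precisely because it encodes the curtailing of the Taylor expansions; if they were zero, \eqref{magicformula} would force $\Pi^{R,1}_x=\Pi^{R,0}_x$. So your separate expansions of $d\Gamma^{R}_{yx}\tau$ and of $d\Gamma^{R}_{zx}\tau$ are not identities, and the $f^{R,0}_x$-weighted terms never appear anywhere in your computation.

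What makes the proposition true, and what the paper's proof hinges on, is that these correction terms cancel only in the difference: for each non-leading component one compares $\Gamma^{R,0}_{yx}(\hat\tau^{(1)})f^{R,0}_x(\hat\tau^{(2)})$ with $\Gamma^{R,1}_{yz}\Gamma^{R,0}_{zx}(\hat\tau^{(1)})f^{R,0}_x(\hat\tau^{(2)})$, replaces $\Gamma^{R,1}_{yz}$ by $\Gamma^{R,0}_{yz}$ on $\mathcal{T}_0$ (the one point you do argue, and argue correctly), and then uses the composition identity $\Gamma^{R,0}_{yz}\Gamma^{R,0}_{zx}=\Gamma^{R,0}_{yx}$ to conclude that each such difference is zero. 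This cocycle property is never invoked in your proposal, yet it is the essential mechanism; without it the extra $\hat{\Delta}_0$-terms survive and the expression does not reduce to the claimed Sweedler formula. Once that cancellation is established, only $\mathcal{Q}_0\Gamma^{R,0}_{yx}D_\Xi\tau - \Gamma^{R,0}_{yz}\mathcal{Q}_0\Gamma^{R,0}_{zx}D_\Xi\tau$ remains (the projection cannot be dropped on these leading terms, since $D_\Xi\tau$ contains $\Xi_1$), and expanding via \eqref{eq:gammacoaction} and \eqref{eq:sweedlernotation}, with $\mathcal{Q}_0$ removing the $D_\Xi\tau\otimes\one$ part, gives the two sums in the statement; this final portion of your argument matches the paper and is fine.
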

\begin{proof}
	Let $\tau\in\mathcal{T}_{0}$. One can check then
	\begin{equs}
		\left(d\Gamma^{R}_{yx} - \Gamma^{R,1}_{yz}d\Gamma^{R}_{zx}\right)(\tau)\!&= \mathcal{Q}_0 \left( \Gamma_{yx}^{R,0} \otimes f_x^{R,0} \right)\hat{\Delta}_0 D_{\Xi}\tau \\
		&\hphantom{aaaaaa}-\Gamma^{R,1}_{yz}\mathcal{Q}_0 \left( \Gamma_{zx}^{R,0} \otimes f_x^{R,0} \right) \hat{\Delta}_0D_\Xi\tau \\
		&= \mathcal{Q}_0\Gamma^{R,0}_{yx}D_\Xi\tau - \Gamma^{R,1}_{yz}\mathcal{Q}_0\Gamma^{R,0}_{zx}D_\Xi\tau \label{eq:proof4} \\
		&=\mathcal{Q}_0\Gamma^{R,0}_{yx}D_\Xi\tau - \Gamma^{R,0}_{yz}\mathcal{Q}_0\Gamma^{R,0}_{zx}D_\Xi\tau.
	\end{equs}
The third equality above is true because $\mathcal{Q}_0$ maps into $\mathcal{T_0}$ and $\Gamma^{R,0}$, $\Gamma^{R,1}$ agree on $\mathcal{T_0}$. To see why the second inequality is true, we recall, to begin with, that in the triangular structure of $\hat\Delta_0$, the noise $\Xi_1$ ends up in $\hat\tau^{(2)}$ in each component $\hat\tau^{(1)}\otimes\hat\tau^{(2)}$ of the Sweedler notation \eqref{eq:hatdelta0tri}, save for the leading term $D_\Xi\tau\otimes\mathbf{1}$. As $\hat\tau^{(2)}$ feeds into $f^{R,0}_{x}$ and $\hat\tau^{(1)}\in\mathcal{T}_0$, we are able to do away with the projection and	\begin{equs}
		&\mathcal{Q}_0\left(\Gamma_{yx}^{R,0}(\hat\tau^{(1)})\otimes f_x^{R,0}(\hat\tau^{(2)})\right)-\Gamma^{R,1}_{yz}\mathcal{Q}_0\left(\Gamma_{zx}^{R,0}(\hat\tau^{(1)}) \otimes f_x^{R,0}(\hat\tau^{(2)}) \right)\\
		&\hphantom{3mm}=\Gamma_{yx}^{R,0}(\hat\tau^{(1)}) f_x^{R,0}(\hat\tau^{(2)})-\Gamma^{R,1}_{yz}\Gamma_{zx}^{R,0}(\hat\tau^{(1)})f_x^{R,0}(\hat\tau^{(2)})\\
		&\hphantom{3mm}=f_x^{R,0}(\hat\tau^{(2)})\left(\Gamma_{yx}^{R,0}\hat\tau^{(1)}-\Gamma^{R,1}_{yz}\Gamma_{zx}^{R,0}\hat\tau^{(1)}\right) \\
		&\hphantom{3mm}=f_x^{R,0}(\hat\tau^{(2)})\left(\Gamma_{yx}^{R,0}\hat\tau^{(1)}-\Gamma^{R,0}_{yz}\Gamma_{zx}^{R,0}\hat\tau^{(1)}\right)= 0,
	\end{equs}
For the leading term $D_\Xi\tau\otimes\mathbf{1}$ we are unable to remove the projection in \eqref{eq:proof4}. Now with reference with to \eqref{eq:gammacoaction}, we may rewrite \eqref{eq:proof4} as:
	\begin{equs}
		\CQ_0\Gamma^{R,0}_{yx}D_\Xi\tau - \Gamma^{R,0}_{yz}\mathcal{Q}_0\Gamma^{R,0}_{zx}D_\Xi\tau &= \mathcal{Q}_0(\text{id}\otimes\gamma_{yx}^{R,0})\Delta_0D_\Xi\tau \\
		&- (\Id\otimes\gamma^{R,0}_{yz})\Delta_0\mathcal{Q}_0(\Id\otimes\gamma^{R,0}_{zx})\Delta_0D_\Xi\tau.
	\end{equs}
	It only remains to analyse each of the terms on the right hand side in the previous equality. Appealing to Sweedler notation for $\Delta_0$ and noting that $\mathcal{Q}_0$ does away with the leading term with $\Xi_1$ on the left side of the tensor product, we get
	\begin{equs}
		\mathcal{Q}_0(\text{id}\otimes\gamma_{yx}^{R,0})\Delta_0D_\Xi\tau = \sum_{(D_\Xi\tau)}\tau^{(1)}\gamma^{R,0}_{yx}(\tau^{(2)}).
	\end{equs}
	A similar calculation is possible for the remaining term
	\begin{equs}
		(\Id\otimes\gamma^{R,0}_{yz})\Delta_0\mathcal{Q}_0(\Id\otimes\gamma^{R,0}_{zx})\Delta_0D_\Xi\tau &= (\Id\otimes\gamma^{R,0}_{yz})\Delta_0\sum_{(D_\Xi\tau)}\tau^{(1)}\gamma^{R,0}_{zx}(\tau^{(2)}) \\
		&= \sum_{(D_\Xi\tau)}\gamma^{R,0}_{zx}(\tau^{(2)})(\Id\otimes\gamma^{R,0}_{yz})\Delta_0\tau^{(1)} \\
		&= \sum_{(D_\Xi\tau)}\gamma^{R,0}_{zx}(\tau^{(2)})\sum_{(\tau^{(1)})}\tau^{(1,1)}\gamma^{R,0}_{yz}(\tau^{1,2})
	\end{equs}
	which allows us to conclude.
\end{proof}

\end{document}